\DeclareMathOperator{\cl}{cl}
\DeclareMathOperator{\dom}{dom}
\DeclareMathOperator{\MA}{MA}
\DeclareMathOperator{\fin}{fin}
\DeclareMathOperator{\cov}{cov}
\DeclareMathSymbol{\mlq}{\mathord}{operators}{'134}
\DeclareMathSymbol{\mrq}{\mathord}{operators}{'42}
\renewcommand{\Delta}{\triangle}
\newtheorem{proposition}{Proposition}[section]
\newtheorem{lem}[proposition]{Lemma}
\newtheorem{lemma}[proposition]{Lemma}
\newtheorem{cor}[proposition]{Corollary}
\newtheorem*{claim}{Claim}
\newtheorem*{case}{Case}
\newtheorem{theorem}[proposition]{Theorem}
\newtheorem{definition}[proposition]{Definition}
\title[MAD families and pseudocompactness]{Maximal almost disjoint families and pseudocompactness of hyperspaces}
\begin{document}

\author[Guzm\'an]{O. Guzmán}
\address{Centro de Ciencias Matem\'aticas\\ Universidad Nacional Aut\'onoma de M\'exico\\ Campus Morelia\\Morelia, Michoac\'an\\ M\'exico 58089}
\curraddr{}
\email{oguzman@matmor.unam.mx}
\thanks{}

\author[Hru\v{s}\'{a}k]{M. Hru\v{s}\'{a}k}
\address{Centro de Ciencias Matem\'aticas\\ Universidad Nacional Aut\'onoma de M\'exico\\ Campus Morelia\\Morelia, Michoac\'an\\ M\'exico 58089}
\curraddr{}
\email{michael@matmor.unam.mx}
\thanks{{The research of the second author was supported  by a PAPIIT grants IN100317, IN104220 and CONACyT grant A1-S-16164.}}
\urladdr{http://www.matmor.unam.mx/~michael}

\author[Rodrigues]{V. O. Rodrigues}
\address{Institute of Mathematics and Statistics, University of São Paulo, Rua
do Matão, 1010, São Paulo, Brazil}
\curraddr{}
\email{vinior@ime.usp.br}
\thanks{The third author received support from FAPESP (grants 2017/15502-2 and 2019/01388-9. He was also a visiting scholar at the Fields Institute for Mathematical Research while this work was being made.}

\author[Todorcevic]{S. Todor\v{c}evi\'c}
\address{Department of Mathematics, University of Toronto, Toronto, Ontario M5S 2E4, Canada}
\curraddr{}
\email{stevo@math.utoronto.ca}
\thanks{The fourth author is partially supported by Grants from NSERC (455916) and CNRS (IMJ-PRG UMR7586).}

\author[Tomita]{A. H. Tomita}
\address{Institute of Mathematics and Statistics, University of São Paulo, Rua
do Matão, 1010, São Paulo, Brazil}
\curraddr{}
\email{tomita@ime.usp.br}
\thanks{The fifth author received support from FAPESP (grant 2019/19924-4)}
\subjclass[2010]{Primary 	54D20, 03E35; Secondary 	54D35, 03E17}

\date{\today}

\keywords{pseudocompact space, Vietoris hyperspace, almost disjoint family}

	\maketitle
	
	\begin{abstract}
	We show that all maximal almost disjoint families have pseudocompact Vietoris hyperspace if and only if $\mathsf{MA}_\mathfrak c (\mathcal P(\omega)/\mathrm{fin})$ holds. We further study the question
	whether there is a maximal almost disjoint family whose hyperspace is pseudocompact and prove that consistently such families do not exist \emph{genericaly}, by constructing a consistent example of a maximal almost disjoint family $\mathcal A$ of size less than $\mathfrak c$ whose hyperspace is not pseudocompact.
	\end{abstract}

\section{Introduction and notation}
Recall that an infinite collection $\mathcal A\subseteq [\omega]^\omega$ is \emph{almost disjoint (AD)} if any two of its members have finite intersection. An AD family is \emph{maximal (MAD)} if it is not properly contained in any other almost disjoint family. 

Given an almost disjoint family $\mathcal A$, the \emph{Mrówka–Isbell} space $\Psi(\mathcal A)$ associated to $\mathcal A$ is the space $\omega\cup \mathcal A$, where $\omega$ is open and discrete and a open neighborhood basis for $A \in \mathcal A$ is $\{\{A\}\cup (A\setminus F): F \in [\omega]^{<\omega}\}$. It is straightforward to verify that this is a Hausdorff, locally compact, first countable, non compact, zero dimensional topological space, and it is \emph{pseudocompact} (every $\mathbb R$-valued continuous function on $X$ is bounded), if and only if $\mathcal A$ is maximal (see e.g.\cite{HRUSAK20073048}).

The \emph{Vietoris hyperspace} of a topological space $X$ is the set %
$$\exp(X)=\{F \subseteq X: F\neq \emptyset\, \text{and}\, F\,\text{is closed}\}$$ 
endowed with the topology generated by the sets 
$$U^-=\{F \in \exp(X): F \cap U\neq \emptyset\}\text{ and}$$ 
$$U^+=\{F \in \exp(X): F\subseteq U\},$$
where $U\subseteq X$ is open.

In \cite{ginsburg_1975}, J. Ginsburg proved that for a Tychonoff space $X$, if $\exp(X)$ is pseudocompact, then every finite power of $X$ is also pseudocompact. He asked whether there is a relation between the pseudocompactness of $X^\omega$ and that of $\exp(X)$, and asked whether it is possible to characterize those spaces which have pseudocompact hyperspace. 

J. Cao, T. Nogura and A. Tomita \cite{CAO2004133} provided a partial answer by showing that for every homogeneous Tychonoff  space $X$, if $\exp(X)$ is pseudocompact, then $X^\omega$ is pseudocompact.
On the other hand, M. Hru\v{s}\'ak, F. Hern\'andez-Hern\'andez and I. Martínez-Ruiz \cite{HRUSAK20073048} showed that, in {\sf ZFC}, there is a subspace of $\beta \omega$ containing $\omega$ such that $X^\omega$ is pseudocompact but $\exp(X)$ is not. This was extended
by V. Rodrigues, A. Tomita and Y. Ortiz-Castillo \cite{ORTIZCASTILLO20189}, who showed that there is a space $X$ such that $X^\kappa$ is countably compact for every $\kappa<\mathfrak h$, but $\exp(X)$ is still not pseudocompact. They also showed that for spaces of this kind, if $\exp(X)$ is pseudocompact, so are $\exp(X)^\omega$ and $X^\omega$.

J. Cao and T. Nogura, in a private conversation, asked whether $\exp(X)$ is pseudocompact for some/every Mrówka–Isbell space $X$. 
The first relevant observation is:

\begin{proposition}[\cite{HRUSAK20073048}]Let $\mathcal A$ be an AD family. Then  $\Psi(\mathcal A)$ is pseudocompact iff $\Psi(\mathcal A)^\omega$ is pseudcompact iff $\mathcal A$ is MAD.
\end{proposition}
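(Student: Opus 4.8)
The plan is to lean on the equivalence ``$\Psi(\mathcal A)$ pseudocompact $\iff$ $\mathcal A$ MAD'' already recalled in the introduction, and to splice in the countable power. Concretely, I would close the cycle of implications
$$\mathcal A \text{ MAD} \;\Rightarrow\; \Psi(\mathcal A)^\omega \text{ pseudocompact} \;\Rightarrow\; \Psi(\mathcal A) \text{ pseudocompact} \;\Rightarrow\; \mathcal A \text{ MAD},$$
where the last implication is the known fact. The middle implication is immediate: the projection $\Psi(\mathcal A)^\omega \to \Psi(\mathcal A)$ onto the first coordinate is a continuous surjection onto a Tychonoff space, and pseudocompactness is preserved by such maps, since an unbounded continuous real-valued function on the image would pull back to one on the domain. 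So the whole content sits in the first implication.

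For that implication I would work with \emph{feeble compactness}, which coincides with pseudocompactness for Tychonoff spaces: a space is feebly compact iff every sequence $\langle U_n : n\in\omega\rangle$ of nonempty open sets has a cluster point, that is, a point every neighborhood of which meets $U_n$ for infinitely many $n$. Given such a sequence in $\Psi(\mathcal A)^\omega$, I would use that the set $\omega^\omega$ of functions with all coordinates isolated is dense (because $\omega$ is dense in $\Psi(\mathcal A)$), choose $x_n \in U_n \cap \omega^\omega$, and reduce to showing that $\langle x_n : n\in\omega\rangle$ has a cluster point $p$ in $\Psi(\mathcal A)^\omega$; since $x_n\in U_n$, any such $p$ automatically clusters the family $\langle U_n\rangle$.

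The key observation, and the only place maximality enters, is a restricted sequential compactness of $\Psi(\mathcal A)$: every sequence of points of $\omega$ has a subsequence converging in $\Psi(\mathcal A)$. Indeed, either some value repeats infinitely often, giving a constant convergent subsequence, or the range contains an infinite set $B$, and by maximality some $A\in\mathcal A$ has $A\cap B$ infinite, so a subsequence enumerating $A\cap B$ converges to the point $A$. I would then lift this to the power by a diagonal fusion: build $\omega \supseteq M_0 \supseteq M_1 \supseteq \cdots$ so that $\langle x_n(i) : n\in M_i\rangle$ converges to some $p(i)\in\Psi(\mathcal A)$, and take a diagonal $M=\{m_i : i\in\omega\}$ with $m_i\in M_i$ strictly increasing, so that $M\setminus M_i$ is finite for every $i$. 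Then $\langle x_n : n\in M\rangle$ converges coordinatewise to $p=(p(i))_{i\in\omega}$, hence in the product topology, yielding the desired cluster point.

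The main obstacle to keep in view is precisely the infinitude of the product combined with the failure of sequential compactness of the factor: the top copy of $\mathcal A$ is closed and discrete, so $\Psi(\mathcal A)$ is \emph{not} sequentially compact, and one cannot invoke ``sequentially compact spaces have pseudocompact countable powers.'' The argument survives only because feeble compactness lets us test on the dense set $\omega^\omega$, whose coordinates live in $\omega$ where maximality supplies convergent subsequences, and because the diagonalization is arranged so that a single infinite $M$ is almost contained in every $M_i$, forcing simultaneous convergence in all coordinates at once.
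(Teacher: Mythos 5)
Your proof is correct, but there is a caveat about the comparison: the paper does not actually prove this proposition --- it is quoted from \cite{HRUSAK20073048} --- and the only ingredient the paper recalls on its own is the equivalence ``$\Psi(\mathcal A)$ is pseudocompact iff $\mathcal A$ is MAD'', which is exactly the known fact you import for your final implication, so that reliance is legitimate. The two implications you do prove are sound. The middle one is the standard fact that continuous surjections preserve pseudocompactness. In the main implication every step checks out: $\Psi(\mathcal A)^\omega$ is Tychonoff, so pseudocompactness may be replaced by feeble compactness; the set of points with all coordinates in $\omega$ is dense in the product, so it suffices to produce cluster points for sequences of such points; maximality gives every sequence of elements of $\omega$ a convergent subsequence in $\Psi(\mathcal A)$ (either constant, or with distinct values inside a single $A\in\mathcal A$, converging to $A$); and the fusion $M_0\supseteq M_1\supseteq\cdots$ with a diagonal $M$ satisfying $M\subseteq^* M_i$ for all $i$ upgrades this to coordinatewise, hence product, convergence along $M$. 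As for what the approaches buy: your argument is in effect the countable instance of the $p$-limit method that the paper develops for hyperspaces in Lemma~\ref{SufficientPLimit}. Indeed, any free ultrafilter $p$ containing your diagonal set $M$ is an ultrafilter along which the sequence $\langle x_n\rangle$ has a $p$-limit, so $M$ plays the role of the filter produced in the implication from a) to b) in Theorem~\ref{char}; the difference is that in the power $X^\omega$ only countably many coordinates (equivalently, countably many dense subsets of $\mathcal P(\omega)/\fin$) must be handled, which is why a plain \textsf{ZFC} diagonalization suffices, whereas in $\exp(\Psi(\mathcal A))$ one must capture all $\mathfrak c$ many selectors $f\in\prod_{n\in\omega}F_n$ with a single ultrafilter, and the corresponding statement becomes equivalent to $\MA_{\mathfrak c}(\mathcal P(\omega)/\fin)$. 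Your closing remark --- that $\Psi(\mathcal A)$ is not sequentially compact because the copy of $\mathcal A$ is closed discrete, so one must test feeble compactness on the dense set $\omega^\omega$ rather than on arbitrary points --- correctly identifies the one place where the argument could have gone wrong, and why it does not.
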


In particular,  if $exp(X)$ is a pseudocompact hyperspace of a Mrówka-Isbell space then so is $X^\omega$. So Ginsburg's questions restricted to the class of Mrówka-Isbell spaces becomes the problem of characterizing those MAD families whose Mrówka-Isbell spaces are pseudocompact. 

Recall that a family  $\mathcal P\subseteq [\omega]^\omega$ is \emph{centered} if any intersection of a finite number of members of $\mathcal P$ is infinite. A set $A \in [\omega]^\omega$ is a \emph{pseudointersection} of $\mathcal P$ if $A\subseteq^*P$ (i.e. $A\setminus P$ is finite) for every $P \in \mathcal P$.  The \emph{pseudointersection number} $\mathfrak p$ is the smallest cardinality of a centered $\mathcal A\subseteq [\omega]^\omega$ with no pseudointersection. A collection $\mathcal D\subseteq[\omega]^\omega$ is \emph{open dense} if $\forall A \in [\omega]^\omega\, \exists B \in \mathcal D\, B\subseteq A$, and $\forall A \in [\omega]^\omega\, \forall B \in \mathcal D\, A\subseteq^* B\implies A\in \mathcal A$. The \emph{distributivity number} $\mathfrak h$ is the least cardinality of a family of open dense subsets of $[\omega]^\omega$ with empty intersection.

The main result of  \cite{HRUSAK20073048} states:

\begin{theorem}[\cite{HRUSAK20073048}]\label{PequalCHequalC}
\begin{enumerate}
    \item If $\mathfrak p=\mathfrak c$, then $\exp(\Psi(\mathcal A))$ is pseudocompact for every MAD family $\mathcal A$.
    \item If $\mathfrak h<\mathfrak c$, there is a MAD family $\mathcal A$ such that $\exp(\Psi(\mathcal A))$ is not pseudocompact.
\end{enumerate}
\end{theorem}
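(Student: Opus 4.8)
The plan is to replace pseudocompactness of $\exp(\Psi(\mathcal A))$ by a purely combinatorial condition on sequences of finite sets, and then read off each implication from its cardinal hypothesis. Since each nonempty $F\in[\omega]^{<\omega}$ satisfies $\langle\{n\}:n\in F\rangle=\{F\}$, the nonempty finite subsets of $\omega$ are isolated points of $\exp(\Psi(\mathcal A))$, and as $\omega$ is dense in $\Psi(\mathcal A)$ they are dense in $\exp(\Psi(\mathcal A))$. A Tychonoff space with a dense set of isolated points is pseudocompact iff every injective sequence of those points has a cluster point, so $\exp(\Psi(\mathcal A))$ is pseudocompact iff every injective sequence $(F_n)$ of nonempty finite subsets of $\omega$ has a cluster point in $\exp(\Psi(\mathcal A))$. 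I would then analyze the possible cluster points: an infinite subset of $\omega$ cannot be closed in $\Psi(\mathcal A)$ when $\mathcal A$ is MAD, and finite subsets of $\omega$ are isolated, so one may restrict attention to cluster points of the form $C\subseteq\mathcal A$. After a sunflower reduction (root $R$, pairwise disjoint petals, $\min(F_n\setminus R)\to\infty$), being such a cluster point unwinds into two conditions on $C$: \emph{coverage}, that infinitely many $F_n\subseteq\bigcup C$, and \emph{linking}, that for every finite $\mathcal B\subseteq C$ there are infinitely many such $n$ with $F_n\cap A\neq\emptyset$ for all $A\in\mathcal B$.

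For part (1) the key preliminary observation is that coverage is automatic. Put $B=\bigcup_n F_n$; a point of $B$ lying in no member of $\mathcal A$ of infinite trace on $B$ belongs only to members meeting $B$ finitely, so the set of such ``bad'' points is almost disjoint from every member of $\mathcal A$, hence finite by maximality. As $\min F_n\to\infty$ this yields $F_n\subseteq\bigcup\{A:|A\cap B|=\omega\}$ for all large $n$. The remaining content is linking: writing $M_A=\{n:A\cap F_n\neq\emptyset\}$, linking for an enumerated $C=\{A_k\}$ is exactly centeredness of $\{M_{A_k}\}$ on the coverage index set, and once this family is centered a pseudo-intersection together with a diagonalization produces indices $n_0<n_1<\cdots$ with $F_{n_k}$ meeting $A_0,\dots,A_k$, giving a genuine cluster point. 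I would use $\mathfrak p=\mathfrak c$ precisely to extract a centered, coverage-preserving subfamily of $\mathcal A$, phrasing the search for the pair $(C,I)$ as a $\sigma$-centered forcing and invoking $\mathsf{MA}_{<\mathfrak c}(\sigma\text{-centered})$ (equivalently $\mathfrak p=\mathfrak c$). The main obstacle here is exactly producing this subfamily: a general MAD family admits sequences whose infinite-trace members are pairwise ``incompatible'' over $(F_n)$, so linking can fail, and the role of $\mathfrak p=\mathfrak c$ is to guarantee enough compatibility (note $\mathfrak p=\mathfrak c$ forces $\mathfrak h=\mathfrak c$, consistent with part (2)).

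For part (2) I would exploit $\mathfrak h<\mathfrak c$ through a Balcar--Pelant--Simon base matrix $\{\mathcal D_\alpha:\alpha<\mathfrak h\}$, a decreasing tower of MAD families whose union is dense in $([\omega]^\omega,\subseteq^*)$ and whose associated tree has height $\mathfrak h$. The plan is to build, by a recursion of length $\mathfrak c$, a MAD family $\mathcal A$ together with a sequence $(F_n)$ (with $|F_n|\to\infty$, selected across the matrix levels) so that coverage forces any candidate $C$ to meet cofinally many levels while linking cannot be sustained across them; since there are only $\mathfrak h<\mathfrak c$ levels but $\mathfrak c$ branches, one can diagonalize so that no $C\subseteq\mathcal A$ satisfies both conditions. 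At each stage one kills one potential cluster subfamily and simultaneously feeds an infinite set into $\mathcal A$ to preserve maximality. The hard part will be the bookkeeping that keeps $\mathcal A$ almost disjoint and maximal while defeating all $\mathfrak c$ candidate cluster points; this is where the small height $\mathfrak h<\mathfrak c$ is essential, since it bounds the depth along which coverage must be threaded and frees the remaining $\mathfrak c$-many steps for maximality and diagonalization.
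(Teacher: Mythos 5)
Your combinatorial skeleton is sound and matches the paper's own machinery: the finite sets are dense isolated points, cluster points of pairwise disjoint sequences lie inside $\mathcal A$, and the ``coverage plus linking'' characterization is exactly the paper's lemma on limit points (the one following Proposition \ref{NeccessaryAndSufficientGood}); your observation that coverage by the members of infinite trace is automatic from maximality is also correct. The gap is that in both halves the actual engine of the proof is missing. For part (1), ``phrase the search for $(C,I)$ as a $\sigma$-centered forcing and invoke $\MA_{<\mathfrak c}(\sigma\text{-centered})$'' is a statement of intent, and the invoked axiom does not fit the task: the dense sets that actually produce a linked, covering subfamily are indexed by the selectors $f\in\prod_n F_n$ --- for each such $f$ the set $D_f=\{B\in[\omega]^\omega:\exists A\in\mathcal A\ f[B]\subseteq A\}$ is dense in $([\omega]^\omega,\subseteq^*)$ --- and there are $\mathfrak c$ many of them as soon as infinitely many $F_n$ have two points; moreover $\mathcal P(\omega)/\fin$ is not $\sigma$-centered. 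A single application of $\MA_{<\mathfrak c}(\sigma\text{-centered})$ cannot meet $\mathfrak c$ many dense sets, and you never exhibit an alternative poset, its dense sets, or a proof of density (density is precisely where things must fail for the family of part (2), so it cannot come for free). What actually works, and what the paper does (Lemmas \ref{NeccessaryAndSufficientGood} and \ref{SufficientPLimit} together with the argument $a)\rightarrow b)$ of Theorem \ref{char}), is a recursion of length $\mathfrak c$: enumerate the selectors, build a $\subseteq^*$-decreasing tower $(B_\alpha)_{\alpha<\mathfrak c}$ entering each $D_{f_\alpha}$, where $\mathfrak p=\mathfrak c$ is used at each stage only to find a pseudointersection of the $<\mathfrak c$ sets built so far; an ultrafilter extending the tower then yields a $p$-limit of $(F_n)$. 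That tower argument (equivalently, $\mathfrak p=\mathfrak c\Rightarrow\MA_{\mathfrak c}(\mathcal P(\omega)/\fin)$) is the missing mechanism, and it is not a one-shot forcing-axiom application. A separate, smaller flaw: your sunflower reduction fails for injective sequences of unbounded size (e.g.\ $F_n=\{0,\dots,n\}$ has no infinite sunflower subsequence); the reduction to pairwise disjoint sequences is exactly the content of the cited Proposition \ref{NeccessaryAndSufficientGood} and needs a different case analysis.

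For part (2) the proposal has no mechanism at all, and the diagonalization frame is the wrong one. The sequence $(F_n)$ is countable, hence fixed from some initial stage on; after that, whether a given subfamily $C$ satisfies coverage and linking is already determined, so there is nothing to ``kill'' at stage $\alpha$ --- one can only \emph{avoid ever creating} such subfamilies while still adding enough sets to keep $\mathcal A$ maximal, and these two demands genuinely conflict (by part (1) they cannot be reconciled when $\mathfrak p=\mathfrak c$, so any construction must exploit $\mathfrak h<\mathfrak c$ in an essential, global way). Accordingly, the known proof makes non-pseudocompactness \emph{structural} rather than diagonal: using the base tree of height $\mathfrak h<\mathfrak c$ one constructs a MAD family on the countable set $2^{<\omega}$ all of whose members are chains or antichains \cite{HRUSAK20073048}, and then the single sequence $F_n=2^n$ has no cluster point, by an argument that applies uniformly to every candidate $L\subseteq\mathcal A$; the paper's Section 4 achieves the analogous effect with a MAD family of partial functions below the diagonal whose domains are organized by $\omega_1$ many MAD families, a candidate $L$ being defeated by a dichotomy (small $L$: an almost disjoint total function gives a $(\,\cdot\,)^+$ neighborhood missing the tail of the sequence; large $L$: two members with domains in a single family give a pair of $(\,\cdot\,)^-$ neighborhoods no column can meet simultaneously). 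Your sketch contains neither such a structural property of $\mathcal A$ nor any reason why linking must fail across matrix levels, so part (2) remains unproven.
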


Part (2) of the theorem depends heavily on the \emph{base tree theorem} of Balcar, Pelant and Simon \cite{BalcarPelantSimon1980}
which affirms the existence of a tree $\mathcal T\subseteq [\omega]^\omega$  of height $\mathfrak h$ ordered by $\supseteq^*$, such that every element has $\mathfrak c$-many immediate successors, each level is a MAD family and such that every infinite subset of $\omega$ has a subset in the tree.

In \cite{Rodrigues2019}, V. Rodrigues and A. Tomita showed that after adding $\omega_1$ Cohen Reals there is a Cohen indestructible MAD family of cardinality $\omega_1$ whose Mrówka-Isbell space has pseudocompact hyperspace.

\medskip

In this article we optimize the above theorem by showing (Theorem \ref{char}) that the statement that all MAD families have pseudocompact Vietoris hyperspace is equivalent to  the assertion $\mathsf{MA}_\mathfrak c (\mathcal P(\omega)/\mathrm{fin})$ holds\footnote{If $\kappa$ is a cardinal and $P$ is a pre-order, $\MA_{\kappa}(P)$ is the statement ``for every collection of $\leq \kappa$ dense subsets of $P$ there exists a filter $G$ on $P$ which intersects every dense set of the collection''. The boolean algebra
$\mathcal P(\omega)/\fin$ can be seen as the set $[\omega]^\omega$ ordered by $\subseteq^*$.}.

The problem of whether there is a pseudocompact hyperspace of a Mrówka-Isbell space in {\sf ZFC} was raised in \cite{HRUSAK20073048} and is still open. Here we provide a partial answer to the problem by showing that it is consistent that there is
a MAD family $\mathcal A$ of size strictly less than $\mathfrak c$
such that $\exp(\Psi(\mathcal A))$ is not pseudocompact. In particular, it shows that it is consistent that MAD families with pseudocompact hyperspaces do not exist generically \footnote{Recall \cite{guzmanetal} A MAD family with property $\varphi$ exists \emph{generically} if any AD family of size less than $\mathfrak c$ can be extended to a MAD family satisfying $\varphi$.}.

\medskip

Our notation is mostly standard. In particular, $\omega$ denotes the set of finite von Neumann ordinals and is identified with the natural numbers. The set of free ultrafilters over $\omega$ is denoted by $\omega^*$ and is identified with the remainder of the Stone-\v Cech compactification of $\omega$. Given $p \in \omega^*$, a topological space $X$, $x \in $ and a sequence $f:\omega\rightarrow X$, we say that $x$ is a \emph{$p$-limit} of $f$ if for every neighborhood $U$ of $x$, the set $\{n \in \omega: f(n) \in U\}$ belongs to $p$ and we write $p$-$\lim f=x$. 

The smallest cardinality of a MAD family is defined as $\mathfrak a$. It is well known that $\omega_1\leq \mathfrak p\leq \mathfrak h\leq \mathfrak a\leq \mathfrak c$ and that all inequalities are consistently strict. See \cite{BlassCombinatorics} for more on cardinal invariants of the continuum.

\section{Equivalence with \texorpdfstring{$\MA_{\mathfrak c}(\mathcal P(\omega)/\fin)$}{MAcPw}}

In this section we shall identify statements equivalent  to the assertion ``For every MAD family $\mathcal A$, $\exp(\Psi(\mathcal A))$ is pseudocompact''.

The following proposition appears as Proposition 2.1 in \cite{Rodrigues2019}:

\begin{proposition}\label{NeccessaryAndSufficientGood}Let $\mathcal A$ be an almost disjoint family. Then $\exp(\Psi(\mathcal A))$ is pseudocompact if and only if every sequence $F:\omega\rightarrow [\omega]^{<\omega}\setminus\{\emptyset\}\subseteq \exp(\Psi(\mathcal A))$ of pairwise disjoint sets has an accumulation point.
\end{proposition}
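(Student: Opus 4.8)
The plan is to characterize pseudocompactness of $\exp(\Psi(\mathcal A))$ via accumulation points of sequences in the hyperspace, and then reduce the general case to the special sequences described in the statement. Recall that a Tychonoff space is pseudocompact if and only if every sequence of nonempty open sets has a cluster point; equivalently, every countable family of nonempty open sets has an accumulation point. Since $\exp(\Psi(\mathcal A))$ carries the Vietoris topology and $\Psi(\mathcal A)$ is locally compact and zero-dimensional, I would first record that pseudocompactness of $\exp(\Psi(\mathcal A))$ is equivalent to every sequence $\langle K_n : n \in \omega\rangle$ of points of $\exp(\Psi(\mathcal A))$ having an accumulation point, using that pseudocompact Tychonoff spaces are exactly those in which every countable sequence of points accumulates (i.e.\ the space is \emph{feebly compact}).

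The forward direction is immediate: if $\exp(\Psi(\mathcal A))$ is pseudocompact then in particular every sequence $F : \omega \to [\omega]^{<\omega}\setminus\{\emptyset\}$ of pairwise disjoint finite sets, viewed as a sequence of points in the hyperspace, must accumulate, since finite subsets of $\omega$ are closed in $\Psi(\mathcal A)$ and hence genuine points of $\exp(\Psi(\mathcal A))$. For the converse, I would take an arbitrary sequence $\langle K_n : n \in \omega\rangle$ of nonempty closed subsets of $\Psi(\mathcal A)$ and aim to extract from it a sequence of pairwise disjoint nonempty \emph{finite} subsets of $\omega$ whose accumulation point in the hyperspace yields an accumulation point of the original sequence. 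The key reduction steps are: first, by replacing each $K_n$ with a single point of $K_n$ (say picking $x_n \in K_n$), reduce to sequences of singletons, since an accumulation point of a subsequence of singletons $\{x_n\}$ produces a closed set meeting every basic neighborhood and hence clusters the $K_n$; second, split into cases according to whether infinitely many $x_n$ lie in $\omega$, in some single $A \in \mathcal A$, or range over distinct members of $\mathcal A$.

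The case analysis is where the argument does the real work. If infinitely many $x_n$ lie in $\omega$ and are eventually distinct, passing to a subsequence gives pairwise disjoint singletons $\{x_n\} \subseteq [\omega]^{<\omega}$ directly, so the hypothesis applies. The trickier situation is when the chosen points lie in $\mathcal A$, i.e.\ $x_n = A_n$ for points $A_n$ of the Mr\'owka space; here I would use the neighborhood basis $\{A\} \cup (A \setminus F)$ of each $A \in \mathcal A$ to replace each $A_n$ by a suitable finite subset of $A_n \subseteq \omega$, arranging pairwise disjointness by a standard bookkeeping argument (choosing each finite piece inside $A_n$ but disjoint from the finitely many previously chosen pieces, which is possible because the $A_n$ are infinite and almost disjoint). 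One then checks that an accumulation point of the resulting sequence of finite sets is also an accumulation point of the sequence $\langle \{A_n\}\rangle$, because any basic Vietoris neighborhood of a closed set that meets each $A_n$ cofinitely is witnessed already by the finite approximations.

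The main obstacle I anticipate is the bookkeeping in the last case: ensuring simultaneously that the extracted finite sets are pairwise disjoint, that they shrink inside the respective $A_n$ so as to remain in every tail neighborhood of $A_n$, and that an accumulation point in the hyperspace of these finite sets genuinely pulls back to an accumulation point of the original closed sets $K_n$. I would handle the pullback by a careful analysis of basic Vietoris open sets $V = \bigcap_{i<k} U_i^- \cap U^+$, checking that membership of the finite approximations in such a $V$ forces the corresponding $K_n$ (or $\{A_n\}$) into a slightly enlarged basic open set, using that the sets $U_i, U$ are open in $\Psi(\mathcal A)$ and hence contain tails $A \setminus F$ of the relevant points. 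This local analysis of the Vietoris basis, rather than any global combinatorics, is the delicate part of the proof.
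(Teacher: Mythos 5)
Your proposal breaks down at its very first reduction, and the error is not cosmetic. You assert that a Tychonoff space is pseudocompact if and only if every countable sequence of \emph{points} has an accumulation point, and you call this feeble compactness; but feeble compactness is the statement that every sequence of nonempty \emph{open sets} has a cluster point, while ``every sequence of points accumulates'' is countable compactness, a strictly stronger property. For the space at hand this is fatal: $\exp(\Psi(\mathcal A))$ is \emph{never} countably compact for an (infinite) AD family $\mathcal A$. Indeed $x\mapsto\{x\}$ embeds $\Psi(\mathcal A)$ as a closed subspace of $\exp(\Psi(\mathcal A))$, and $\mathcal A$ is an infinite closed discrete subset of $\Psi(\mathcal A)$; concretely, if $A_n$ are distinct members of $\mathcal A$, the sequence of singletons $\{A_n\}$ has no accumulation point in the hyperspace. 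This also shows that the ``pullback'' you rely on in your hardest case cannot work: any accumulation point $L$ of your disjointified finite sets $F_n\subseteq A_n$ satisfies $L\subseteq\mathcal A$, and then for $B\in L$ the subbasic neighborhood $\bigl(\{B\}\cup B\bigr)^-$ of $L$ contains $\{A_n\}$ for at most one $n$ (the point $A_n$ is neither equal to $B$ nor an element of $B\subseteq\omega$). So the converse direction of your argument is attempting to prove a false statement, and the step where you transfer accumulation from the finite approximations back to the original points is exactly where it fails. (Your forward direction is also misjustified, though fixably so: what matters is not that finite subsets of $\omega$ are closed, but that they are \emph{isolated} points of $\exp(\Psi(\mathcal A))$ --- note that $\bigcap_{x\in F}\{x\}^-\cap F^+=\{F\}$ --- so that their singletons are open sets, to which feeble compactness applies; alternatively, from a non-accumulating sequence of isolated points one directly builds an unbounded continuous function, which is what the paper's definition of pseudocompactness requires.)

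What the converse actually requires is a cluster point for an arbitrary sequence $(\mathcal V_n)$ of nonempty \emph{open} subsets of the hyperspace, and here you have the freedom to choose witnesses inside each $\mathcal V_n$ rather than being handed arbitrary closed sets. Since $\omega$ is dense in $\Psi(\mathcal A)$, the nonempty finite subsets of $\omega$ are dense in $\exp(\Psi(\mathcal A))$, so one may pick finite $F_n\in\mathcal V_n$; these need not be pairwise disjoint, but a sunflower (delta-system) refinement writes $F_n=R\cup G_n$ along a subsequence with the $G_n$ pairwise disjoint, and the hypothesis applied to $(G_n)$ (or trivially to the constant sequence $R$ if almost all $G_n$ are empty) yields an $L\subseteq\mathcal A$ such that $L\cup R$ clusters the original sequence $(\mathcal V_n)$; feeble compactness then implies pseudocompactness for arbitrary spaces, with no Tychonoff hypothesis needed. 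Note that the paper does not reprove this proposition --- it cites Proposition 2.1 of the Rodrigues--Tomita paper --- but any correct proof must go through sequences of open sets in this way; no argument that tries to accumulate arbitrary point-sequences of the hyperspace can succeed.
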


By restricting ourselves to pairwise disjoint sequences, we can get a result similar to one found in \cite{HRUSAK20073048} which appears as Lemma 3.1.

\begin{lemma}
Let $\mathcal A$ be an almost disjoint family. Let $F=(F_n: n \in \omega)$ be a sequence of pairwise disjoint finite nonempty subsets of $\omega$. Given $A\subseteq \omega$, let $I_A=\{n \in \omega: F_n\cap A\neq \emptyset\}$ and $M_A=\{n \in \omega: F_n\subseteq A\}$. Then:
\end{lemma}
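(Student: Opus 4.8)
The plan is to push every assertion back to a single combinatorial fact forced by pairwise disjointness: \emph{for any finite $S\subseteq\omega$ the index set $\{n:F_n\cap S\neq\emptyset\}$ is finite}, since each element of $S$ belongs to at most one $F_n$. Once this is isolated, the relations among the $I_A$ and $M_A$ reduce to unwinding the definitions and discarding finite error sets. First I would record the exact (not merely mod-finite) identities, which use no disjointness at all: from $F_n\subseteq A\iff F_n\cap(\omega\setminus A)=\emptyset$ one gets $M_A=\omega\setminus I_{\omega\setminus A}$; from $F_n\neq\emptyset$ one gets $M_A\subseteq I_A$; and distributing ``$F_n$ meets $-$'' over unions and ``$F_n\subseteq-$'' over intersections gives $I_{A\cup B}=I_A\cup I_B$ and $M_{A\cap B}=M_A\cap M_B$. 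These are immediate.

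Next come the mod-finite statements, where the key fact does the work. For monotonicity, if $A\subseteq^* B$ then $n\in I_A\setminus I_B$ forces $F_n$ to meet the finite set $A\setminus B$, so $I_A\subseteq^* I_B$; dually, if $A\subseteq^* B$ then $n\in M_A\setminus M_B$ forces $F_n\subseteq A$ with $F_n\cap(A\setminus B)\neq\emptyset$, whence $M_A\subseteq^* M_B$. In particular $A=^*B$ yields $I_A=^* I_B$ and $M_A=^* M_B$, so both invariants descend to $\mathcal P(\omega)/\fin$. The one place where almost disjointness of $\mathcal A$ genuinely enters is the independence of distinct members: if $A\cap B$ is finite then $M_A\cap M_B=M_{A\cap B}$ is finite, again directly from the key fact (only finitely many $F_n$ can sit inside a finite set). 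I expect the mod-finite bookkeeping here to be the only real subtlety, and it is routine.

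If, as the comparison with Lemma~3.1 of \cite{HRUSAK20073048} suggests, one of the items instead characterizes accumulation points of the sequence $(F_n)$ in $\exp(\Psi(\mathcal A))$, I would first observe that any accumulation point $K$ satisfies $K\cap\omega=\emptyset$: a point $m\in K\cap\omega$ produces the neighborhood $\{m\}^-$, which contains at most one $F_n$, contradicting accumulation. Thus $K$ is an arbitrary nonempty subset of the closed discrete set $\mathcal A$. Writing out the basic Vietoris neighborhoods of such a $K$ and using that finite trimmings $A\setminus s$ discard only finitely many indices, the accumulation condition collapses to the infinitude of the relevant $I_{A_i}$ and $M_{A_i}$; for a singleton $\{A\}$ it reads exactly ``$M_A$ is infinite''. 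The genuine obstacle is the upper constraint $F_n\subseteq U$ over open $U\supseteq K$ when $K$ is infinite, since the removed tails accumulate and need no longer be finite; I would handle it by drawing the witnessing indices from the already-infinite intersection $\bigcap_i I_{A_i}$ and checking that the containment holds for cofinitely many of them, which is the step that I expect to require the most care.
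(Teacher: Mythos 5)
Your first two paragraphs establish true identities about $I_A$ and $M_A$, and your argument for item (1) (a point $m\in L\cap\omega$ gives the neighborhood $\{m\}^-$, which contains at most one $F_n$) coincides with the paper's proof; but none of this is the substance of the lemma. Item (2) is a specific biconditional: $L\subseteq\mathcal A$ is a limit point of $F$ if and only if, for every $P\subseteq\omega$ that (almost) contains every member of $L$, the family $\{I_A: A\in L\}\cup\{M_P\}$ is centered. You never formulate this condition, and the condition you gesture at instead (``infinitude of the relevant $I_{A_i}$ and $M_{A_i}$'') is not equivalent to it beyond singletons: for distinct $A_0,A_1\in L$ the set $M_{A_0}\cap M_{A_1}=M_{A_0\cap A_1}$ is finite (only finitely many of the pairwise disjoint $F_n$ fit inside a finite set), and the correct requirement for finite $L=\{A_0,\dots,A_l\}$ involves the union, namely that $I_{A_0}\cap\dots\cap I_{A_l}\cap M_{A_0\cup\dots\cup A_l}$ be infinite, while for infinite $L$ one genuinely needs the quantification over all admissible $P$. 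The paper's proof runs exactly through these objects: forward, for admissible $P$ and $k\in\omega$ the set $L\cup(P\setminus k)$ is open in $\Psi(\mathcal A)$, so $\bigl(L\cup(P\setminus k)\bigr)^+\cap(\{A_0\}\cup A_0)^-\cap\dots\cap(\{A_l\}\cup A_l)^-$ is a Vietoris neighborhood of $L$, and any $F_n$ inside it witnesses $n\in I_{A_0}\cap\dots\cap I_{A_l}\cap M_P$ beyond $k$; backward, given a basic neighborhood $U_0^-\cap\dots\cap U_l^-\cap V^+$ of $L$, one takes $P=V\cap\omega$, picks $A_i\in L\cap U_i$ with $A_i\setminus k_i\subseteq U_i$, and uses pairwise disjointness of the $F_n$ only to discard the finitely many $n$ whose $F_n$ meets $\max_i k_i$.

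Moreover, your proposed handling of what you call the genuine obstacle --- drawing witnesses from the infinite set $\bigcap_{i\leq l} I_{A_i}$ and ``checking that the containment holds for cofinitely many of them'' --- is false, and this is precisely where the content of the lemma lies. Let $\mathcal A=\{A_i: i\in\omega\}$ be a partition of $\omega$ into infinite sets, let $L=\mathcal A$, and choose pairwise disjoint $F_n=\{a_n^0,\dots,a_n^n\}$ with $a_n^i\in A_i$. Every finite intersection of the $I_{A_i}$ is cofinite, yet for the diagonal $D=\{a_n^n: n\in\omega\}$ the set $U=\Psi(\mathcal A)\setminus D$ is open (each $D\cap A_i$ is a singleton) and contains $L$, while $F_n\not\subseteq U$ for every single $n$; so $L$ is not a limit point, and the containment fails everywhere, not just on a finite set. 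The hypothesis of item (2) detects this exactly through the sets $M_P$: for $P=\omega\setminus D$, which almost contains every $A_i$, one has $M_P=\emptyset$. (The same example shows that the quantifier in item (2) must be read as ranging over $P$ with $A\subseteq^* P$ for all $A\in L$, which is how the paper's proof of the converse uses it via $P=V\cap\omega$; under literal containment the only admissible $P$ here would be $\omega$, and the centered condition would hold.) In short, the characterization in item (2) --- both its statement and the proofs of the two implications --- is missing from your proposal, and the substitute you suggest is refuted by this example.
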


\begin{enumerate}
    \item If $L$ is a limit point of the sequence $F$ in $\exp(X)$, then $L\subseteq \mathcal A$, and
    \item Given $L \subseteq \mathcal A$, $L$ is a limit point of $F$ if, and only if for every $P \subseteq \omega$ such that $\forall A \in L \, A\subseteq P$, the set $\{I_A: A \in L\}\cup\{M_P\}$ is centered.
\end{enumerate}

\begin{proof}
For the first item, notice that if $n \in \omega\cap L$, then $\{n\}^{-}$ is a neighborhood of $L$ which intersects at most one element from the sequence $F$, so $L$ cannot be a limit point for $F$.

For the second item, first suppose $L$ is a limit point for $F$. Fix arbitrary $A_0, \dots, A_l \in L$ and $P$ as in the item, we must show that $I_{A_0}\cap\dots\cap I_{A_n}\cap M_P$ is infinite. Fix $k \in \omega$. Notice that $L\cup (P\setminus k)$ is open, so $V=(L\cup P)^{+}\cap (\{A_0\}\cup A_0)^-\dots\cap (\{A_n\}\cup A_l)^-$ is a nhood of $L$, so it must have a point $F_n$ with $n\geq k$. Then $F_n\subseteq P$ and $F_n\cap A_i\neq \emptyset$ for each $i$, that is, $n \in I_{A_0}\cap\dots\cap I_{A_l}\cap M_P\setminus k$. Since $k$ is arbitrary we are done.

Now we prove the converse. Let $U_0, \dots, U_n, V$ be open sets of $\Psi(\mathcal A)$ such that $L \in U_0^-\cap \dots \cap U_n^-\cap V^+$. Let $P=V\cap \omega$ and, for each $i\leq l$, let $A_i \in L\cap U_i$ and let $k_i$ be such that $A_i\setminus k_i\subseteq U_i$. Then $I_{A_0}\cap \dots \cap I_{A_l}\cap M_P$ is infinite. Since $F$ is a pairwise disjoint sequence, there exists $m$ such that for all $n\geq m$, $F_n\cap \max\{k_0, \dots, k_l\}=\emptyset$. Let $m\geq n$ be in $I_{A_0}\cap \dots \cap I_{A_l}\cap M_P$. Then $F_m \in U_0^-\cap \dots \cap U_l^-\cap V^+$ and the proof is complete.
\end{proof}

A sufficient condition to guarantee the existence of a limit point is the following lemma:

\begin{lem}\label{SufficientPLimit} Let $\mathcal A$ be an almost disjoint family, $p$ be a free ultrafilter and $F:\omega\rightarrow [\omega]^{<\omega}\setminus\{\emptyset\}\subseteq \exp(\Psi(\mathcal A))$ be a sequence of pairwise disjoint sets. Then if for every sequence $f \in \prod_{n \in \omega}F_n$ there exists $A \in \mathcal A$ and $B \in p$ such that $f[B]\subseteq A$, it follows that $F$ has a $p$-limit.
\end{lem}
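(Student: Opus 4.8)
The plan is to write down an explicit candidate for the $p$-limit and then verify the defining property directly against a neighbourhood basis of the hyperspace. Keeping the notation $I_A=\{n\in\omega:F_n\cap A\neq\emptyset\}$ of the previous lemma, and recalling that any limit point must be a subset of $\mathcal A$, the natural guess is that $A$ should belong to the limit exactly when $p$-many terms meet $A$; so I set
\[
L=\{A\in\mathcal A : I_A\in p\}.
\]
Since $\omega$ is open and discrete in $\Psi(\mathcal A)$ and each $A\in\mathcal A\setminus L$ has the basic neighbourhood $\{A\}\cup A$ meeting $\mathcal A$ only in $A$, every subset of $\mathcal A$ is closed, so $L\in\exp(\Psi(\mathcal A))$ as soon as $L\neq\emptyset$. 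Nonemptiness is immediate from the hypothesis applied to any $f\in\prod_n F_n$: it gives $A\in\mathcal A$ and $B\in p$ with $f[B]\subseteq A$, and then $B\subseteq I_A$ forces $I_A\in p$, i.e.\ $A\in L$. It thus remains to show $L$ is a $p$-limit, i.e.\ that for every basic open $W=U_0^-\cap\dots\cap U_k^-\cap V^+$ containing $L$ the set $\{n:F_n\in W\}$ lies in $p$. As $p$ is a filter and this set is the finite intersection of the $\{n:F_n\cap U_i\neq\emptyset\}$ with $\{n:F_n\subseteq V\}$, it suffices to handle each subbasic factor separately.

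The factors $U_i^-$ are routine and use only that $p$ is free. Fixing $i$, I choose $A\in L$ with $A\in U_i$ (possible since $L\cap U_i\neq\emptyset$) and $k$ with $A\setminus k\subseteq U_i$. By pairwise disjointness only finitely many $F_n$ meet $\{0,\dots,k-1\}$, so $\{n:F_n\cap(A\setminus k)\neq\emptyset\}$ differs from $I_A\in p$ by a finite set and hence lies in $p$; being contained in $\{n:F_n\cap U_i\neq\emptyset\}$, the latter belongs to $p$ as well.

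The factor $V^+$ is where the full strength of the hypothesis enters, and I expect this to be the crux. Put $P=V\cap\omega$; since $V$ is open and $L\subseteq V$, every $A\in L$ satisfies $A\subseteq^* P$, and $\{n:F_n\subseteq V\}=\{n:F_n\subseteq P\}=M_P$. Suppose toward a contradiction that $M_P\notin p$, so $\{n:F_n\not\subseteq P\}\in p$, and define $f\in\prod_n F_n$ by picking $f(n)\in F_n\setminus P$ whenever $F_n\not\subseteq P$ (and arbitrarily otherwise). The hypothesis yields $A\in\mathcal A$ and $B\in p$ with $f[B]\subseteq A$; then $B'=B\cap\{n:F_n\not\subseteq P\}\in p$, and for $n\in B'$ one has $f(n)\in(A\setminus P)\cap F_n$. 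In particular $B'\subseteq I_A$, so $A\in L$ and therefore $A\subseteq^* P$. But $\{f(n):n\in B'\}$ is an infinite set (the indices $B'$ being infinite) of pairwise distinct elements (the $F_n$ being disjoint) lying in the finite set $A\setminus P$ — a contradiction.

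Combining the three paragraphs, every basic neighbourhood of $L$ captures a $p$-large set of indices, so $L=p\text{-}\lim F$ and $F$ has a $p$-limit as claimed. The main obstacle is genuinely the $V^+$ step: the $U_i^-$ conditions are near-automatic, whereas ruling out a $p$-large set of terms escaping $V$ requires feeding a carefully chosen selector $f$ into the hypothesis and then cornering the resulting $A$ into membership in $L$, which contradicts the almost-containment $A\subseteq^* P$.
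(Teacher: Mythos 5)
Your proof is correct and takes essentially the same approach as the paper's: an explicit candidate limit is written down and verified against the subbasic sets $U^-$ and $V^+$, with the $V^+$ case handled—exactly as in the paper—by feeding a selector $f$ that avoids $V$ on a $p$-large set into the hypothesis and deriving a contradiction with $A\subseteq^* V$. The only difference is presentational: you define the limit intrinsically as $L=\{A\in\mathcal A: I_A\in p\}$, while the paper takes $\{A_f : f\in\prod_{n\in\omega}F_n\}$ with chosen witnesses $A_f$, $B_f$; under the hypothesis these are the same set, and your description merely costs one extra ``cornering'' step (showing the witness $A$ lies in $L$) that the paper gets for free from membership in its candidate set.
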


\begin{proof}Let $P=\prod_{n \in \omega} F_n$.	Given $f \in P$, fix $B_f \in p$ and $A_f \in \mathcal A$ such that $f[B_f]\subseteq A_f$. Let $\mathcal B=\{A_f: f \in P\}$ We claim that $p$-$\lim F=\mathcal B$.
	
	To verify the claim, it suffices to verify the $p$-limit condition for sub-basic sets, so let $U\subset \Psi(\mathcal A)$ be open.
	
	If $\mathcal B \in U^-$, then there exists $f \in P$ with $A_f\in U$. Since $U$ is open, $A_f\subseteq^* U$. Then $f[B_f]\subseteq^* U$. So $B_f\subseteq^*\{n \in \omega: f(n) \in U\}\subseteq \{n \in \omega: F_n\in U^-\}$. Since $B_f \in p$ and $p$ is a free ultrafilter, it follows that $\{n \in \omega: F_n\in U^-\} \in p$. 
	
	If $\mathcal B \in U^+$, suppose by contradiction that $\{n \in \omega: F_n \in U^+\}\not \in p$. Then $I=\{n \in \omega:F_n\setminus U\neq \emptyset\}\in p$. Let $f \in P$ be such that for each $n \in I$, $f(n) \in F_n\setminus U$. Then $f[I\cap B_f]\subseteq^* A_f$ and $f[I\cap B_f]\setminus U$ is infinite, so $A_f\setminus U$ is infinite. On the other hand, since $\mathcal B \in U^+$ we have $A_f \in U$, but $U$ is open, so $A_f\subseteq^* U$, a contradiction.
\end{proof}

Given a $T_1$ topological space $X$ with no isolated points, the \emph{Baire number} of $X$, denoted by $n(X)$, is the smallest cardinality of a family of open dense subsets of $X$ with empty intersection. In the following theorem, the equivalence between a) and d) with an arbitrary infinite $\kappa$ in the place of $\mathfrak c$ was presented without proof in \cite{BalcarPelantSimon1980}. For the sake of completeness, we present a proof (in the proof we present, one could switch $\mathfrak c$ for any other infinite cardinal).

	\begin{theorem}\label{char} The following are equivalent:
	\begin{enumerate}[label=\alph*)]
	\item $\MA_{\mathfrak c}(\mathcal P(\omega)/\fin)$
	\item For every MAD family $\mathcal A$, $\exp(\Psi(\mathcal A))$ is pseudocompact,
	\item $\mathfrak h=\mathfrak c$ and every base tree has a cofinal branch
	\item $n(\omega^*)>\mathfrak c$.
	\end{enumerate}
	\end{theorem}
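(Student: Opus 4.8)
The plan is to establish the cycle (a) $\Rightarrow$ (b) $\Rightarrow$ (c) $\Rightarrow$ (d) together with the direct equivalence (a) $\Leftrightarrow$ (d), so that all four statements collapse to one. I will first record (a) $\Leftrightarrow$ (d), which is pure Stone duality: $\omega^*$ is the Stone space of $\mathcal P(\omega)/\fin$, the basic clopen sets being $\widehat A=\{u\in\omega^*:A\in u\}$ for $A\in[\omega]^\omega$. A set $D\subseteq[\omega]^\omega$ is dense in $([\omega]^\omega,\subseteq^*)$ exactly when $\bigcup_{A\in D}\widehat A$ is dense open in $\omega^*$, and a filter $G$ meets every member of a family $\{D_i\}$ of dense sets precisely when some ultrafilter extending $G$ lies in $\bigcap_i\bigcup_{A\in D_i}\widehat A$. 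Hence ``every family of $\le\mathfrak c$ dense sets admits a filter meeting all of them'' is literally ``every family of $\le\mathfrak c$ dense open subsets of $\omega^*$ has nonempty intersection'', i.e. $n(\omega^*)>\mathfrak c$.

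For (a) $\Rightarrow$ (b) I would fix a MAD family $\mathcal A$ and, by Proposition \ref{NeccessaryAndSufficientGood}, a pairwise disjoint sequence $F=(F_n)$ of nonempty finite sets; the goal is an accumulation point. The key observation is that, since the $F_n$ are pairwise disjoint, every selector $f\in\prod_{n}F_n$ is injective, so for each such $f$ the set $D_f=\{X\in[\omega]^\omega:\exists A\in\mathcal A\ X\subseteq^* f^{-1}[A]\}$ is dense in $\mathcal P(\omega)/\fin$: given $Y\in[\omega]^\omega$, injectivity makes $f[Y]$ infinite, maximality gives $A\in\mathcal A$ with $A\cap f[Y]$ infinite, and then $f^{-1}[A\cap f[Y]]\subseteq Y$ is an infinite subset of $f^{-1}[A]$. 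There are at most $\mathfrak c$ selectors, so (a) furnishes a filter $G$ meeting every $D_f$; extending $G$ to an ultrafilter $p$ yields, for each $f$, some $A\in\mathcal A$ with $f^{-1}[A]\in p$. This is exactly the hypothesis of Lemma \ref{SufficientPLimit}, so $F$ has a $p$-limit, hence an accumulation point, and (b) follows.

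For (c) $\Rightarrow$ (d) I would take an arbitrary family $\{V_\alpha:\alpha<\mathfrak c\}$ of dense open subsets of $\omega^*$ and aim to intersect it. Each $D_\alpha=\{X\in[\omega]^\omega:\widehat X\subseteq V_\alpha\}$ is dense open in $\mathcal P(\omega)/\fin$, and since $\mathfrak h=\mathfrak c$ a routine strengthening of the base tree construction produces a base tree $\mathcal T$ of height $\mathfrak c$ whose $\alpha$-th level $L_\alpha$ is contained in $D_\alpha$ (one simply refines each node into $D_\alpha$ while building level $\alpha$, which preserves maximality of the level and density of $\bigcup_\alpha L_\alpha$). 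By the second half of (c), $\mathcal T$ has a cofinal branch $\{A_\alpha:\alpha<\mathfrak c\}$; this branch is $\subseteq^*$-decreasing, so it generates a filter, and any ultrafilter $u$ extending it satisfies $u\in\widehat{A_\alpha}\subseteq V_\alpha$ for every $\alpha$. Thus $\bigcap_\alpha V_\alpha\neq\emptyset$ and $n(\omega^*)>\mathfrak c$.

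The remaining implication (b) $\Rightarrow$ (c) is where the real work lies and is the step I expect to be the main obstacle. Arguing contrapositively, I would first use Theorem \ref{PequalCHequalC}(2) to obtain $\mathfrak h=\mathfrak c$ for free: if $\mathfrak h<\mathfrak c$, then some MAD family already has non-pseudocompact hyperspace, contradicting (b). It then remains to show that, assuming (b) and $\mathfrak h=\mathfrak c$, every base tree has a cofinal branch; equivalently, from a base tree $\mathcal T$ with no cofinal branch one must manufacture a MAD family $\mathcal A$ and a pairwise disjoint sequence $(F_n)$ with no accumulation point, contradicting (b) via Proposition \ref{NeccessaryAndSufficientGood} and the limit-point characterization preceding Lemma \ref{SufficientPLimit}. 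The delicate point is to encode $\mathcal T$ into $\mathcal A$ and $(F_n)$ so that, for a candidate limit $L\subseteq\mathcal A$, centeredness of $\{I_A:A\in L\}\cup\{M_P\}$ for all admissible $P$ forces the corresponding members of $\mathcal A$ to trace a $\subseteq^*$-decreasing chain through all $\mathfrak c$ levels of $\mathcal T$, that is, a cofinal branch. Since $\mathcal T$ has none, no such $L$ exists and $(F_n)$ has no accumulation point. Making this encoding precise while keeping $\mathcal A$ genuinely maximal is the crux, and it is essentially a branch-sensitive refinement of the base tree construction underlying Theorem \ref{PequalCHequalC}(2).
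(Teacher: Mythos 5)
Your three executed implications are correct and, in substance, are the paper's own arguments: your a) $\Leftrightarrow$ d) via Stone duality is the paper's a) $\rightarrow$ d) and d) $\rightarrow$ a) (the paper routes a) $\rightarrow$ d) through MAD families refining the open dense sets, but this is cosmetic); your a) $\rightarrow$ b) with the dense sets of selectors is the paper's proof, with $X\subseteq^* f^{-1}[A]$ replacing the paper's $f[B]\subseteq A$, an immaterial change; and your c) $\rightarrow$ d), building a base tree whose $\alpha$-th level lands inside the $\alpha$-th dense open set and then following a cofinal branch, is the same device the paper uses for its implication c) $\rightarrow$ a). Your overall scheme (the cycle a) $\Rightarrow$ b) $\Rightarrow$ c) $\Rightarrow$ d) plus a) $\Leftrightarrow$ d)) would indeed establish the theorem --- if all four links were proved.

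They are not: b) $\Rightarrow$ c) is left open, exactly where you flagged it, and this is a genuine gap rather than a routine omission. You invoke Theorem \ref{PequalCHequalC}(2) only as a black box, i.e.\ only its statement ``$\mathfrak h<\mathfrak c$ implies some MAD family has non-pseudocompact hyperspace''. That disposes of the case $\mathfrak h<\mathfrak c$, but in the remaining case --- $\mathfrak h=\mathfrak c$ together with a base tree of height $\mathfrak c$ having no cofinal branch --- you acknowledge that one must manufacture a MAD family and a pairwise disjoint sequence with no accumulation point, and you defer that construction as ``the crux''. The paper closes precisely this hole with a single observation: the proof of Theorem \ref{PequalCHequalC}(2) in \cite{HRUSAK20073048} uses the hypothesis $\mathfrak h<\mathfrak c$ only to produce a base tree with no branch of cardinality $\mathfrak c$ (any tree of height $\mathfrak h<\mathfrak c$ has this property trivially), and the construction there takes as input an arbitrary base tree without branches of cardinality $\mathfrak c$ --- whatever its height --- and outputs a MAD family whose hyperspace is not pseudocompact, since an accumulation point of the relevant disjoint sequence would, via the centeredness criterion of the lemma preceding Lemma \ref{SufficientPLimit}, generate a filter meeting $\mathfrak c$-many levels of the tree and hence a branch of size $\mathfrak c$. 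Negating c) yields such a tree in both cases, so $\neg$b) follows. In other words, the ``branch-sensitive refinement'' you anticipate having to invent is already the content of the cited proof; what is missing from your proposal is either this observation or a from-scratch execution of that construction, and without one of the two the theorem is not proved.
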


	\begin{proof}
	
	$a)\rightarrow b)$ Suppose $\MA_{\mathfrak c}(\mathcal P(\omega)/\fin)$ and fix a MAD family $\mathcal A$. Let $F:\omega\rightarrow[\omega]^{<\omega}\setminus\{\emptyset\}$ be a pairwise disjoint sequence. Let $P=\prod_{n\in \omega}F_n$. Given $f \in P$, let 
	$$D_f=\{B \in [\omega]^\omega:\exists A \in \mathcal A\, f[B]\subseteq A\}.$$
	It is straightforward to verify $D_f$ is is dense in $\mathcal P(\omega)/\fin$. By $\MA_{\mathfrak c}(\mathcal P(\omega)/\fin)$, let $p$ be a filter intersecting every member of $\{D_f: f \in P\}$. Then, by Lemma \ref{SufficientPLimit}, $F$ has a $p$-limit. Now the conclusion follows from Proposition \ref{NeccessaryAndSufficientGood}.
	
	$b)\rightarrow c)$	Negating $c)$, either $\mathfrak h<\mathfrak c$ or there exists a base tree of height $\mathfrak c$ with no cofinal branches. Either way, there is a base tree with no branches of cardinality $\mathfrak c$, so the negation of $b)$ follows from the second statement of Proposition \ref{PequalCHequalC}.
	
	$c)\rightarrow a)$ Suppose $\neg\MA_{\mathfrak c}(\mathcal P(\omega)/\fin)$ and $\mathfrak h=\mathfrak c$. By $\neg\MA_{\mathfrak c}(\mathcal P(\omega)/\fin)$, there exists a family $(\mathcal A_\alpha: \alpha<\mathfrak c)$ of MAD families such that for every $p \in \omega^*$, $p\cap A_\alpha=\emptyset$ for some $\alpha<\mathfrak c$. Using $\mathfrak h=\mathfrak c$ and following the standard construction of a base tree (e.g. \cite{BlassCombinatorics}), there exists a base tree $\mathcal T$ of height $\mathfrak c$ such that every level $\mathcal T_\alpha$ of $\mathcal T$ refines every element of $\{\mathcal A_\beta: \beta<\alpha\}$ (that is: given $\beta<\alpha$ and $A \in \mathcal T_\alpha$, there exists $B \in \mathcal A_\beta$ such that $A\subseteq^*B$). Then $\mathcal T$ cannot have a cofinal branch, for if it had, we would be able to extend it to an ultrafilter, and this ultrafilter would intersect every $\mathcal T_\alpha$ for every $\alpha<\mathfrak c$, so it would also intersect $\mathcal A_\alpha$ for every $\alpha<\mathfrak c$.
	
	 $a)\rightarrow d)$ suppose $\MA_{\mathfrak c}(\mathcal P(\omega)/\fin)$ and let $(U_\alpha: \alpha<\mathfrak c)$ be a collection of open dense subsets of $\omega^*$. For each $\alpha$, let $\mathcal A_\alpha$ be an infinite almost disjoint family such that $A^*\subseteq U_\alpha$ for every $A \in \mathcal A_\alpha$ maximal for this property. It is easy to verify that each $\mathcal A_\alpha$ is a MAD family. Since MAD families are maximal antichains on $\mathcal P(\omega)/\fin$, by $\MA_{\mathfrak c}(\mathcal P(\omega)/\fin)$ there exists an filter that intersects all of them. We can extend this filter to an ultrafilter $p$. For every $\alpha<\omega$, there exists $A \in A_\alpha\cap p$, so $p \in A^*\subseteq U_\alpha$, therefore $p \in \bigcap_{\alpha<\mathfrak c}U_\alpha$.
	 
	 $d)\rightarrow a)$ Suppose $n(\omega^*)>\mathfrak c$ and let $(\mathcal B_\alpha: \alpha<\omega_1)$ dense subsets of $\mathcal P(\omega)/\fin$. For each $\alpha<\mathfrak c$, let $U_\alpha=\bigcup\{B^*: B \in \mathcal B_\alpha\}$. It is easy to verify $U_\alpha$ is open and dense. Let $p \in \bigcap_{\alpha<\mathfrak c} U_\alpha$. Then for each $\alpha<\mathfrak c$ there exists $B \in \mathcal B_\alpha$ such that $p \in B^*$, that is, $B \in p\cap \mathcal B_\alpha$.
	\end{proof}

    Next we present a model of $\mathfrak p<\mathfrak c$ where all Mrówka-Isbell spaces from MAD families have pseudocompact hyperspaces.

	\begin{theorem} It is consistent that $\mathfrak p<\mathfrak c$ and $exp(\mathcal A)$ is pseudocompact for every  MAD family $\mathcal A$. 
	\end{theorem}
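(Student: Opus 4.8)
The plan is to read off the target from Theorem \ref{char}. Since the conclusion ``$\exp(\Psi(\mathcal A))$ is pseudocompact for every MAD family $\mathcal A$'' is exactly condition (b), it is equivalent to (a) $\MA_{\mathfrak c}(\mathcal P(\omega)/\fin)$, to (c) ``$\mathfrak h=\mathfrak c$ and every base tree has a cofinal branch'', and to (d) $n(\omega^*)>\mathfrak c$. Thus it suffices to build a single model in which $\mathfrak p<\mathfrak c$ holds together with any one of (a), (c), (d). I would aim at (c) and construct a model of $\mathfrak p<\mathfrak h=\mathfrak c$ in which every base tree has a cofinal branch. Observe that $\mathfrak p<\mathfrak c$ already forces $\mathfrak c\geq\aleph_2$, so the real content of the theorem is that $\MA_{\mathfrak c}(\mathcal P(\omega)/\fin)$ is strictly weaker than the hypothesis $\mathfrak p=\mathfrak c$ used in Proposition \ref{PequalCHequalC}(1).

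I would produce the model in two stages over a ground model of GCH. In the first stage I pass to an intermediate model $W$ satisfying $\mathfrak p=\aleph_1<\mathfrak h=\mathfrak c=\aleph_2$, arranged so that $2^{\aleph_2}$ is large (leaving room for (d)); the separation of $\mathfrak p$ from $\mathfrak h$ is obtained by an iteration of Mathias-type forcings that raise $\mathfrak h$ to $\aleph_2$ while a fixed $\omega_1$-tower is preserved throughout, witnessing $\mathfrak p=\aleph_1$. In the second stage I force $\MA_{\mathfrak c}(\mathcal P(\omega)/\fin)$ over $W$ by iterating, with countable support and bookkeeping of length $\mathfrak c^+$, the natural $\sigma$-closed posets that adjoin a filter meeting a prescribed family of at most $\mathfrak c$ dense subsets of $\mathcal P(\omega)/\fin$ (equivalently, that adjoin a cofinal branch through a prescribed base tree). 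Crucially, once (a) is forced, condition (c) and hence $\mathfrak h=\mathfrak c$ hold \emph{automatically} in the final model by Theorem \ref{char}, so no separate maintenance of $\mathfrak h=\mathfrak c$ across the iteration is needed.

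The verification rests on three points. First, a countable support iteration of $\sigma$-closed forcings is $\sigma$-closed, so the second stage adds no new reals; consequently the $\omega_1$-tower from $W$ acquires no pseudointersection and $\mathfrak p=\aleph_1$ is preserved, and the set of reals, hence $\mathfrak c=\aleph_2$, is unchanged provided no cardinals are collapsed. Second, because $\mathfrak h=\mathfrak c$ holds already in $W$, the posets $\mathcal P(\omega)/\fin$ and the base trees are sufficiently distributive that the second-stage iteration preserves $\aleph_2$; this is where $\mathfrak h=\mathfrak c$ is essential, since for $\mathfrak h<\mathfrak c$ the forcing $\mathcal P(\omega)/\fin$ would collapse $\mathfrak c$ onto $\mathfrak h$. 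Third, the length-$\mathfrak c^+$ bookkeeping guarantees that every family of at most $\mathfrak c$ dense sets occurring in the final model is met by some filter, which is precisely $\MA_{\mathfrak c}(\mathcal P(\omega)/\fin)$; the conclusion then follows from Theorem \ref{char}.

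I expect the main obstacle to be exactly the tension between the two demands. The inequality $\mathfrak p<\mathfrak c$ requires $\mathfrak c\geq\aleph_2$ together with a surviving small tower, features normally produced by adding reals, whereas $\MA_{\mathfrak c}(\mathcal P(\omega)/\fin)\equiv n(\omega^*)>\mathfrak c$ is a strong genericity statement that most real-adding forcings immediately destroy. Reconciling them is what forces the two-stage design, in which the reals and the value of $\mathfrak c$ are fixed in stage one and stage two is kept real-preserving. The delicate technical cores are therefore (i) achieving $\mathfrak p<\mathfrak h=\mathfrak c$ in $W$ with a \emph{genuinely} preserved $\omega_1$-tower, and (ii) proving that the stage-two $\sigma$-closed iteration catches every $\mathfrak c$-family of dense sets while collapsing no cardinal, both of which rely on the distributivity furnished by $\mathfrak h=\mathfrak c$.
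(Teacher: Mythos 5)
Your reduction via Theorem \ref{char} is legitimate, but the construction you propose for stage two does not work as described, and this is where the real content of the theorem lies. Consider what your iterand, ``the natural $\sigma$-closed poset that adjoins a filter meeting a prescribed family of at most $\mathfrak c$ dense subsets of $\mathcal P(\omega)/\fin$'', can actually be. (i) If conditions are countable $\subseteq^*$-decreasing sequences ordered by end-extension, this poset provably collapses $\mathfrak c$ to $\omega_1$: identifying each entry $t_\gamma$ with $\omega$ via its increasing enumeration, for every ground-model $X\in[\omega]^\omega$ it is dense that some consecutive pair codes $X$ (append a pseudointersection $B$ of the condition and then $\{b_n: n\in X\}$, where $\langle b_n\rangle$ enumerates $B$), so the generic $\omega_1$-sequence reads off a surjection from $\omega_1$ onto $[\omega]^\omega\cap V$ --- the same block-coding argument showing that adding a subset of $\omega_1$ by countable conditions collapses $\mathfrak c$ under $\neg$CH. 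The final model would then satisfy CH, not $\mathfrak p<\mathfrak c$. (ii) If conditions may have length up to $\omega_1$, then conditions which are towers without pseudointersection are \emph{stuck}: they have no proper extension, they are dense (copy a tower of length $\omega_1$ inside a pseudointersection of any given condition --- such towers exist in $W$ and persist through your real-preserving iteration precisely because you need $\mathfrak p=\aleph_1$ to survive), and below a stuck tower $T$ the open dense set $D_T=\{X:\forall\alpha\ T_\alpha\not\subseteq^* X\}$ can never be met by the generated filter; so the density arguments that are supposed to prove $\MA_{\mathfrak c}(\mathcal P(\omega)/\fin)$ break down. (iii) The only remaining candidate iterand is $\mathcal P(\omega)/\fin$ itself, and that is exactly where your ``automatic'' argument is circular.

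To iterate $\mathcal P(\omega)/\fin$ you must know, at \emph{every intermediate stage}, that $\mathfrak h=\mathfrak c$ holds there; otherwise the next iterand is not $\mathfrak c$-distributive and, by Balcar--Pelant--Simon, collapses $\mathfrak c$ onto $\mathfrak h$. You cannot extract this from Theorem \ref{char} applied to the final model: the final model is only well defined if the intermediate steps behave, so the preservation must be proved inductively, from the ground up. And it is not a formality: since $\mathcal P(\omega)/\fin$ adds no reals, a two-step iteration of it is the same as forcing with its square, and by Shelah--Spinas it is consistent that the distributivity number of the square is strictly below $\mathfrak h$; thus even the second step of your iteration can collapse $\aleph_2$, and nothing in your proposal rules this out for your particular $W$. (Your stage one, a model of $\mathfrak p=\aleph_1<\mathfrak h=\mathfrak c$ with a surviving tower, is itself a nontrivial known consistency result, not a routine Mathias iteration: unguided Mathias reals are pseudointersections and tend to destroy towers.) The paper sidesteps all of this and never forces a genericity statement by iteration: it starts from $\mathfrak p=\mathfrak c=\omega_2$ plus a Suslin tree $S$ and forces with $S$, which is ccc, adds no reals, and makes $\mathfrak p=\omega_1<\mathfrak c$; pseudocompactness of $\exp(\Psi(\mathcal A))$ for every MAD $\mathcal A$ of the extension is verified directly, by refining the $S$-name for $\mathcal A$ to a ground-model MAD family $\mathcal B$ (possible since $|S|=\omega_1<\mathfrak h^V$), building in $V$ from $\mathfrak p=\mathfrak c$ an ultrafilter giving the required $p$-limits, and observing that this ultrafilter survives because no reals are added.
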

	
	\begin{proof}
	Suppose $V\vDash\mathfrak p=\mathfrak c=\omega_2 +\text{there exists a Suslin Tree}$. Let $S$ be a well pruned Suslin tree and let $G$ be $S$ generic over $\mathbf V.$ It is well known that $S$ forces $\mathfrak p=\omega_1<\mathfrak c$ (see, for example, \cite{Farah1996}). Suppose $\mathcal A$ is a MAD family in $V[G]$.
	
	\smallskip
	
	\textbf{Claim:} there exists a MAD family $\mathcal B \in \textbf{V}$ such that for every $B \in \mathcal B$ there exists $A \in \mathcal A$ such that $B\subseteq^*A$.

\begin{proof}[Proof of the claim]

Let $\mathring{\mathcal A}$ be a name for $\mathcal A$ and let $p\in S$ be such that $p\Vdash \mathring{\mathcal A}$ is a MAD family. If $t\leq p$, let $\mathcal A_t=\{A \in [\omega]^\omega: t\Vdash \check A\in \mathring{\mathcal A}\}$. Each of there sets is an almost disjoint family. In $\mathbf V$, for each $t\leq p$ let $\mathcal B_t$ be a MAD family containing $\mathcal A_t$.

Since $|S|=\omega_1<\mathfrak h$, there exists $\mathcal B$ refining $\{\mathcal B_t: t\leq p\}$, that is, for every $B \in \mathcal B$ and for every $t\leq p$, there exists $A \in \mathcal B_t$ such that $B\subseteq^*A$.

We show $\mathcal B$ is as intended: given $B \in \mathcal B$, there exists $A \in \mathcal A$ such that $|B\cap A|=\omega$. Since forcing with Suslin trees do not add reals, there exists $t\leq p$ such that $t\Vdash A\in \mathring{\mathcal A}$, so $A \in \mathcal A_t$. There exists $A'\in \mathcal B_t$ such that $B\subseteq^* A'$. Since $A', A \in \mathcal B_t$, it follows that $A=A'$, which completes the proof of the claim.
\end{proof}

 Let $F \in \mathbf V[G]$ be a sequence of pairwise disjoint finite nonempty subsets of $\omega$. Since forcing with $S$ does not add reals, $F \in \mathbf{V}$. Working in $\mathbf V$, since $\mathfrak p=\mathfrak c$ holds, there exists a free ultrafilter $p$ for which every $f \in \prod_{n \in \omega}F_n$ there is $I \in p$ such that $f[I]$ is contained in an element of $\mathcal B$.
 
 In $\mathbf V[G]$, $p$ is still a free ultrafilter and for every $f \in \prod_{n \in \omega}F_n$ there is $I \in p$ such that $f[I]$ is contained in an element of $\mathcal A$. This implies that every such an $f$ has a $p$-limit in $\Psi(\mathcal A)$ and that in the hyperspace, $p$-$\lim F=\{p\text{-}\lim f: f \in \prod_{n \in \omega} F_n\}$.
	\end{proof}

	\section{Generic existence of pseudocompact MAD families}
	
	In this section we study sufficient conditions for the existence of MAD families with pseudocompact hyperspaces which we shall call \emph{pseudocompact}. In particular we give sufficient conditions for the existence of both large and small pseudocompact MAD families. Following \cite{guzmanetal} we shall say that \emph{pseudocompact MAD families exist generically} if every AD family of size less than $\mathfrak c$ can be extended to a pseudocompact one. Of course, it follows from the results of the previous section that pseudocompact MAD families exist generically if the conditions of Theorem \ref{char} are satisfied, i.e. if $\mathfrak h=\mathfrak c$ and every base tree has a cofinal branch.
	
	On the other hand, this is not equivalent to the generic existence of pseudocompact MAD families which we shall show next. Recall \cite{brendle-shelah} that 
	given an ultrafilter $\mathcal U$ the \emph{pseudointersection number} $\mathfrak p(\mathcal U)$ of $\mathcal U$ is defined as the minimal size of a subfamily $\mathcal X$ of $\mathcal U$ without  a pseudointersection in $\mathcal U$. I.e. $\mathfrak p(\mathcal U)>\omega$ if and only if $\mathcal U$ is a \emph{$P$-point}, and $\mathfrak p(\mathcal U)=\mathfrak c$ if and only if $\mathcal U$
	is a \emph{simple $P_\mathfrak c$-point} i.e. an ultrafilter generated by a $\subseteq^*$-decreasing chain of length $\mathfrak c$.

	\begin{theorem} If $\mathcal A$ is a MAD family, $\mathcal U$ an ultrafilter and $|\mathcal A|< \mathfrak p (\mathcal U)$ then $\exp(\Psi(\mathcal A))$ is pseudocompact.
	\end{theorem}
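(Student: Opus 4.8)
The plan is to reduce the statement to the sufficient condition of Lemma \ref{SufficientPLimit} and then verify that the given ultrafilter $\mathcal{U}$ itself serves as the required witnessing ultrafilter. By Proposition \ref{NeccessaryAndSufficientGood}, it suffices to show that every sequence $F=(F_n:n\in\omega)$ of pairwise disjoint nonempty finite subsets of $\omega$ has an accumulation point in $\exp(\Psi(\mathcal{A}))$. By Lemma \ref{SufficientPLimit}, this will follow if I can exhibit a free ultrafilter $p$ such that for every $f\in\prod_{n\in\omega}F_n$ there exist $A\in\mathcal{A}$ and $B\in p$ with $f[B]\subseteq A$. I claim that $p=\mathcal{U}$ works, so that in fact $F$ has a $\mathcal{U}$-limit. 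Note first that the hypothesis $|\mathcal{A}|<\mathfrak{p}(\mathcal{U})$ forces $\mathfrak{p}(\mathcal{U})>\omega$, so $\mathcal{U}$ is a (free) $P$-point and is a legitimate choice of $p$.

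First I would record the key structural fact about selectors: since the $F_n$ are pairwise disjoint, every $f\in\prod_n F_n$ is injective, so $f[X]$ is infinite whenever $X\subseteq\omega$ is infinite. Moreover, for $A\in\mathcal{A}$, asking that $f[B]\subseteq A$ for some $B\in\mathcal{U}$ is the same as asking that $f^{-1}(A)\in\mathcal{U}$. Hence the goal reduces to showing: for each $f$, some $A\in\mathcal{A}$ has $f^{-1}(A)\in\mathcal{U}$.

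The heart of the argument is a proof by contradiction using $|\mathcal{A}|<\mathfrak{p}(\mathcal{U})$. Fix $f$ and suppose that $f^{-1}(A)\notin\mathcal{U}$ for every $A\in\mathcal{A}$. Since $\mathcal{U}$ is an ultrafilter, the complements $\{\omega\setminus f^{-1}(A):A\in\mathcal{A}\}$ all lie in $\mathcal{U}$; this is a subfamily of $\mathcal{U}$ of cardinality at most $|\mathcal{A}|<\mathfrak{p}(\mathcal{U})$, so by definition of $\mathfrak{p}(\mathcal{U})$ it has a pseudointersection $X\in\mathcal{U}$. Thus $X$ is infinite and $X\cap f^{-1}(A)$ is finite for every $A\in\mathcal{A}$. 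Now $f[X]$ is an infinite subset of $\omega$, so by maximality of $\mathcal{A}$ there is $A\in\mathcal{A}$ with $|f[X]\cap A|=\omega$; since $f$ is injective, this forces $X\cap f^{-1}(A)$ to be infinite, the desired contradiction.

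Finally I would assemble the pieces: the contradiction shows that for every $f$ there is $A\in\mathcal{A}$ with $B:=f^{-1}(A)\in\mathcal{U}$ and $f[B]\subseteq A$, so Lemma \ref{SufficientPLimit} yields a $\mathcal{U}$-limit of $F$, and Proposition \ref{NeccessaryAndSufficientGood} then gives pseudocompactness of $\exp(\Psi(\mathcal{A}))$. The only genuinely delicate point — and the step I expect to be the crux — is the contradiction in the previous paragraph, where one must combine the pseudointersection property of $\mathcal{U}$ with the maximality of $\mathcal{A}$ and the injectivity of selectors; everything else is bookkeeping and an unwinding of definitions already performed in Lemma \ref{SufficientPLimit}.
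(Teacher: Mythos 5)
Your proposal is correct and is essentially the paper's own proof: the same reduction via Proposition \ref{NeccessaryAndSufficientGood} and Lemma \ref{SufficientPLimit} with $p=\mathcal{U}$, followed by the same contradiction, namely taking a pseudointersection in $\mathcal{U}$ of the family $\{\omega\setminus f^{-1}(A):A\in\mathcal{A}\}$ (which has size $<\mathfrak{p}(\mathcal{U})$) and contradicting the maximality of $\mathcal{A}$ using the injectivity of the selector. The only cosmetic difference is that the paper phrases the final step as ``$f[B]\cap A$ is finite for every $A\in\mathcal{A}$,'' whereas you pull the contradiction back through $f^{-1}$; these are the same argument.
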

	
	\begin{proof} Let $\mathcal U$ be given. Fix a MAD family $\mathcal A$ such that $|\mathcal A|<\mathfrak p(\mathcal U)$. By Lemma \ref{SufficientPLimit}, and Proposition \ref{NeccessaryAndSufficientGood}, it is sufficient to verify that for every injective sequence $f:\omega\rightarrow \omega$ there exists $B \in U$ and $A \in \mathcal A$ such that $f[B]\subseteq A$.
	
	Suppose this is not the case. Then there exists $f:\omega\rightarrow \omega$ such that for all $A \in \mathcal A$ and $B \in \mathcal U$, $f[B]\setminus A$ is infinite. 
	
	First, notice that given $A \in \mathcal A$, there exists $B_A \in \mathcal U$ such that $f[B_A]\cap A$ is empty: the sets $\{n \in \omega: f(n)\notin A\}$ and $\{n \in \omega: f(n)\in A\}$ form a partition of $\omega$, so one of them is in $\mathcal U$. But the second is not in $\mathcal U$ by hypothesis. Let $B_A$ be the first set.
	
	Now let $B$ be a pseudointersection of $\{B_A: A \in \mathcal A\}$ in $\mathcal U$. It follows that $f[B]\cap A$ is finite for every $A \in \mathcal A$, contradicting the maximality of $\mathcal A$.
	\end{proof}
	
	Note that the same argument shows that:
	
	\begin{cor} If there is an ultrafilter $\mathcal U$ such that $\mathfrak p(\mathcal U)=\mathfrak c$ then pseudocompact MAD families exist generically.
	\end{cor}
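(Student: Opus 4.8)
The plan is to fix once and for all an ultrafilter $\mathcal U$ with $\mathfrak p(\mathcal U)=\mathfrak c$ and, given an arbitrary AD family $\mathcal B$ with $|\mathcal B|<\mathfrak c$, to build a MAD family $\mathcal A\supseteq\mathcal B$ for which this single $\mathcal U$ serves as a universal limit ultrafilter. Concretely, I would aim to produce an $\mathcal A$ satisfying the condition that for every injective $f:\omega\to\omega$ there exist $A\in\mathcal A$ and $B\in\mathcal U$ with $f[B]\subseteq A$; call this $(\star)$. The point of $(\star)$ is that the selectors $f\in\prod_{n}F_n$ of any pairwise disjoint sequence $F=(F_n)$ of finite nonempty sets are injective, so $(\star)$ feeds directly into Lemma \ref{SufficientPLimit} with $p=\mathcal U$ to produce a $\mathcal U$-limit, hence an accumulation point, of every such $F$; Proposition \ref{NeccessaryAndSufficientGood} then yields that $\exp(\Psi(\mathcal A))$ is pseudocompact. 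Thus the whole problem reduces to constructing a MAD family extending $\mathcal B$ and satisfying $(\star)$.

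I would carry out a transfinite recursion of length $\mathfrak c$, starting from $\mathcal A_0=\mathcal B$ and adjoining at most one set at each step, along a single enumeration interleaving two kinds of tasks: an enumeration $\langle f_\alpha:\alpha<\mathfrak c\rangle$ of all injective functions $\omega\to\omega$ (to secure $(\star)$) and an enumeration $\langle X_\alpha:\alpha<\mathfrak c\rangle$ of $[\omega]^\omega$ (to secure maximality). The invariant maintained is that the family $\mathcal A_\alpha$ built by stage $\alpha$ is AD and has size $\max(|\mathcal B|,|\alpha|)<\mathfrak c=\mathfrak p(\mathcal U)$. A maximality task for $X_\alpha$ is trivial: if $X_\alpha$ already meets some member of $\mathcal A_\alpha$ in an infinite set we do nothing, and otherwise $X_\alpha$ is AD from $\mathcal A_\alpha$, so we simply adjoin $X_\alpha$ itself.

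The substantive step is the $(\star)$ task for $f_\alpha$, and here one reuses verbatim the argument of the preceding theorem. If $f_\alpha^{-1}[A]\in\mathcal U$ for some $A\in\mathcal A_\alpha$ the task is already met; otherwise $C_A:=\{n:f_\alpha(n)\notin A\}\in\mathcal U$ for every $A\in\mathcal A_\alpha$, and since $|\mathcal A_\alpha|<\mathfrak p(\mathcal U)$ the family $\{C_A:A\in\mathcal A_\alpha\}$ has a pseudointersection $B\in\mathcal U$. Injectivity of $f_\alpha$ then makes $f_\alpha[B]$ infinite and almost disjoint from every member of $\mathcal A_\alpha$, while $f_\alpha^{-1}[f_\alpha[B]]=B\in\mathcal U$; so adjoining $A_\alpha:=f_\alpha[B]$ preserves the AD invariant and discharges the task. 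Since enlarging an AD family can never destroy an already-satisfied requirement of the form ``there is a member such that \dots'', at the end $\mathcal A=\bigcup_{\alpha<\mathfrak c}\mathcal A_\alpha$ is AD, is MAD (every $X_\alpha$ meets some member in an infinite set), and satisfies $(\star)$. As $\mathcal B$ was an arbitrary AD family of size $<\mathfrak c$, this shows that pseudocompact MAD families exist generically.

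The only delicate point is the bookkeeping that keeps $|\mathcal A_\alpha|<\mathfrak c$ at every stage, which is exactly what licenses the pseudointersection step; but this holds automatically, since the maximum of two cardinals below $\mathfrak c$ is again below $\mathfrak c$, so no regularity assumption on $\mathfrak c$ is needed. Beyond that, the argument is routine: all of the genuine mathematical content is the single pseudointersection move carried over from the theorem, now applied inside an inductive construction rather than to a fixed $\mathcal A$, and the observation that $(\star)$ together with Lemma \ref{SufficientPLimit} and Proposition \ref{NeccessaryAndSufficientGood} captures pseudocompactness of the hyperspace.
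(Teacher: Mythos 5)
Your proof is correct and is essentially the paper's own argument: the paper proves this corollary only by the remark that ``the same argument shows,'' referring to the preceding theorem's pseudointersection step, and your construction is the natural fleshing-out of that remark --- redeploying that step inside a transfinite recursion of length $\mathfrak c$, keeping each approximation of size $<\mathfrak c=\mathfrak p(\mathcal U)$, and feeding the resulting property $(\star)$ into Lemma \ref{SufficientPLimit} and Proposition \ref{NeccessaryAndSufficientGood}. The bookkeeping details you supply (interleaving maximality and $(\star)$ tasks, persistence of witnesses, no regularity assumption on $\mathfrak c$) are exactly what the paper leaves implicit.
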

	
Next we will construct a model where this actually happens, i.e. $\mathfrak{a}=\omega_{1}$ and there is an ultrafilter $\mathcal{U}$ such that $\mathfrak{p}\left(  \mathcal{U}\right)
=\omega_{2}.$ We will use the method of matrix iterations, which was
introduced by Blass and Shelah in \cite{UltrafilterswithSmallGeneratingSets}
and further developed by Brendle and Fischer in
\cite{MadFamiliesSplittingFamiliesandLargeContinuum}. We will provide a quick
review of this method, but it would be helpful if the reader had familiarity
with \cite{MadFamiliesSplittingFamiliesandLargeContinuum}. To learn more about
matrix iterations, the reader may consult
\cite{MatrixIterationsandCichonDiagram, SplittingBoundingandAlmostDisjointnesscanbequitedifferent,
CoherentSystemsofFiniteSupportIterations,
VanDouwenDiagramforDenseSetsofRationals,
PseudoPpointsandSplittingNumber, OntheCofinalityoftheSplittingNumber}.

\smallskip

The following forcing was introduced by Hechler \cite{Hechler} for adding generically a MAD family:
Let $\gamma\geq\omega_{1}.$ Define $\mathbb{H}_{\gamma}$ as the set of all
functions $p$ such that there are $F_{p}\in\left[  \gamma\right]  ^{<\omega}$
and $n_{p}\in\omega$ such that $p:F_{p}\times n_{p}\longrightarrow2.$

It also appears in \cite{MadFamiliesSplittingFamiliesandLargeContinuum}.
Given $p,q\in\mathbb{H}_{\gamma},$ define $p\leq q$ if the following holds:

\begin{enumerate}
\item $q\subseteq p$ (hence $F_{q}\subseteq F_{p}$ and $n_{q}\leq n_{p}$).

\item For every $\alpha,\beta\in F_{q}$ (with $\alpha\neq\beta$) and
$i\in\lbrack n_{q},n_{p}),$ if $p\left(  \alpha,i\right)  =1,$ then $p\left(
\beta,i\right)  =0.$
\end{enumerate}

Assume $G\subseteq\mathbb{H}_{\gamma}$ is a generic filter. For every
$\alpha<\gamma,$ define 
$$A_{\alpha}^{G}=\left\{  i\mid p\in G\left(  p\left(
\alpha,i\right)  =1\right)  \right\}  .$$
Define the \emph{generic \textsf{AD}
family }as $\mathcal{A}_{\gamma}^{G}=\left\{  A_{\alpha}^{G}\mid\alpha
<\gamma\right\}  .$ The following lemma is well known and easy to see:

\begin{lem}
Let $\gamma\leq\omega_{1}$ and $G\subseteq\mathbb{H}_{\gamma}$ a generic filter.

\begin{enumerate}
\item If $\alpha<\gamma,$ then $A_{\alpha}^{G}$ is infinite.

\item $\mathcal{A}_{\gamma}^{G}$ is an \textsf{AD} family.

\item If $\delta<\gamma,$ then $\mathbb{H}_{\delta}$ is a regular suborder of
$\mathbb{H}_{\gamma}.$

\item If $\gamma=\omega_{1},$ then $\mathcal{A}_{\omega_{1}}^{G}$ is a
\textsf{MAD} family.
\end{enumerate}
\end{lem}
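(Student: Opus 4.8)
The lemma lists four standard facts about the Hechler forcing $\mathbb{H}_\gamma$ for generically adding an almost disjoint family. I would prove them in the order (1), (2), (3), (4), since each builds naturally on the previous density and regularity arguments.

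\textbf{Items (1) and (2): infiniteness and almost disjointness.} For item (1), I would show by a density argument that each $A_\alpha^G$ is infinite. Fix $\alpha<\gamma$ and $m\in\omega$; given any condition $p$, I extend it to $q\leq p$ by enlarging $n_p$ to some $n_q>m$ and defining $q(\alpha,i)=1$ for a single new column $i\in[n_p,n_q)$ while setting $q(\beta,i)=0$ for all other $\beta\in F_q$. This $q$ is a legitimate condition (the incompatibility clause (2) is respected because only $\alpha$ gets value $1$ in column $i$), and $q\Vdash i\in \mathring A_\alpha^G$ with $i>m$. Hence the set of conditions forcing $A_\alpha^G$ to have an element above $m$ is dense, so $A_\alpha^G$ is infinite. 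For item (2), given distinct $\alpha,\beta<\gamma$, I argue that it is dense to force $A_\alpha^G\cap A_\beta^G\subseteq n$ for conditions with $\{\alpha,\beta\}\subseteq F_p$ and $n_p\geq n$: once both $\alpha$ and $\beta$ lie in $F_q$, clause (2) of the order guarantees that in every future column at most one of them receives value $1$, so no new common element can ever be added. Thus $A_\alpha^G\cap A_\beta^G$ is finite, and $\mathcal A_\gamma^G$ is an \textsf{AD} family.

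\textbf{Item (3): regularity of the suborder.} For $\delta<\gamma$, the inclusion $\mathbb H_\delta\subseteq\mathbb H_\gamma$ is given by restricting the coordinate set $F_p$ to $\delta$. I would verify the two requirements for a regular (complete) suborder: first, that the order and incompatibility relations agree, which is immediate since the order conditions (1) and (2) only ever compare coordinates within $F_q$; second, that every maximal antichain of $\mathbb H_\delta$ remains maximal in $\mathbb H_\gamma$, equivalently that every $p\in\mathbb H_\gamma$ has a reduction to $\mathbb H_\delta$. The natural reduction of $p$ is its restriction $p\restriction(F_p\cap\delta)\times n_p$, and I would check that any $\mathbb H_\delta$-condition below this restriction is compatible in $\mathbb H_\gamma$ with $p$: one simply amalgamates the two conditions on the disjoint coordinate blocks $\delta$ and $\gamma\setminus\delta$, which causes no clash in clause (2) precisely because the incompatibility condition is coordinate-pairwise. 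This amalgamation is the technical heart of this item.

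\textbf{Item (4): maximality when $\gamma=\omega_1$.} This is the main obstacle, as it genuinely uses $\gamma=\omega_1$ and a genericity argument rather than a one-step density. Let $\mathring X$ be a name for an infinite subset of $\omega$ and $p$ a condition; I must find $q\leq p$ and $\alpha<\omega_1$ forcing $\mathring X\cap A_\alpha^G$ to be infinite. The strategy is to pick $\alpha\in\omega_1\setminus F_p$ fresh, and build a decreasing sequence of conditions deciding more and more elements of $\mathring X$ while diagonally feeding those decided elements into column-values of coordinate $\alpha$: whenever a condition forces some large $k\in\mathring X$, I extend it to set the $\alpha$-column equal to $1$ at $k$ (legal since $\alpha$ is a fresh coordinate, so no incompatibility arises), thereby forcing $k\in A_\alpha^G$. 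Carrying this out cofinally often produces a condition forcing $A_\alpha^G\cap\mathring X$ to be infinite. The role of $\omega_1$ is that there are always unused coordinates $\alpha<\omega_1$ to diagonalize against any given name, which is exactly what fails for larger $\gamma$ where $\mathcal A_\gamma^G$ need not be maximal; the fact that $\mathbb H_{\omega_1}$ is $\sigma$-centered (hence proper and adds no uncountable antichains) keeps the bookkeeping manageable.
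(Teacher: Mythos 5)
Your proposal has to be judged on its own merits here: the paper does not prove this lemma at all (it is stated as ``well known and easy to see,'' with references to Hechler and Brendle--Fischer). Your arguments for items (1)--(3) are correct and are the standard ones: the density argument for (1); for (2), the observation that once $\{\alpha,\beta\}\subseteq F_p$ clause (2) of the order freezes $A_\alpha^G\cap A_\beta^G$ inside $n_p$; and for (3), the reduction $p\mapsto p\upharpoonright((F_p\cap\delta)\times n_p)$ together with amalgamation on the disjoint blocks of coordinates.

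Item (4), however, has a genuine gap, in two places. First, the closing step fails outright: ``carrying this out cofinally often produces a condition'' is impossible, because $\mathbb{H}_{\omega_1}$ is not countably closed --- an infinite descending sequence of conditions has strictly increasing lengths $n_{p_0}<n_{p_1}<\cdots$, so its union is not a condition (the $n_p$ must be finite), and in general such a sequence has no lower bound at all. Second, and more fundamentally, fixing one fresh $\alpha$ in advance cannot work for every name: if $\dot{X}$ is the canonical name for $A_\beta^G$ itself, $\beta\neq\alpha$, then by item (2) every condition forces $\dot{X}\cap A_\alpha^G$ to be finite, so no condition whatsoever forces the conclusion you are trying to reach. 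Concretely, your recursion jams at the second step: to force $k\in\dot{X}$ for this name a condition $r$ must have $(\beta,k)\in\dom(r)$, hence $n_r>k$; and once $\alpha\in F_r$ (which happens after your first step) the entry $(\alpha,k)$ is already decided, so $k$ can no longer be ``fed into column $\alpha$.''

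What is missing is the structure of the standard Hechler/Brendle--Fischer argument. Using the ccc and nice names, together with the uncountable cofinality of $\omega_1$, find $\delta<\omega_1$ such that $\dot{X}$ is an $\mathbb{H}_\delta$-name; then $X\in V[G_\delta]$ is an actual set, so there is no interference between deciding $\dot{X}$ and choosing column values. Now argue by a dichotomy in $V[G_\delta]$: either $X\in\mathcal{I}(\mathcal{A}_\delta)$, i.e.\ $X\subseteq^{*}A_{\beta_1}\cup\dots\cup A_{\beta_k}$ for finitely many $\beta_i<\delta$, in which case by pigeonhole $X$ already meets some $A_{\beta_i}$ in an infinite set and no fresh coordinate is needed; or $X\in\mathcal{I}(\mathcal{A}_\delta)^{+}$, in which case a density argument in the quotient at coordinate $\delta$ --- which is exactly a Mathias-type forcing whose conditions are a stem $s$ plus a finite set $F\subseteq\delta$ of constraints, new points being allowed into column $\delta$ only outside $\bigcup_{\beta\in F}A_\beta$ --- shows that $A_\delta\cap X$ is infinite, precisely as in item (3) of Lemma \ref{Basic Mathias}. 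Your closing remark about the role of $\omega_1$ is also off: what the argument needs is uncountable cofinality (so that every real appears at a bounded stage), and indeed $\mathcal{A}_\gamma^G$ is MAD for every $\gamma$ of uncountable cofinality (e.g.\ $\gamma=\omega_2$), while maximality genuinely fails when $\mathrm{cf}(\gamma)=\omega$.
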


More properties and preservation results may be consulted in
\cite{DiegoHechler}. 

\medskip

Let $\mathcal{F}$ be a filter on $\omega$. Define the \emph{Mathias forcing of
}$\mathcal{F}$ (denoted as $\mathbb{M(\mathcal{F})}$) \cite{mathias} as the set of all
$p=\left(  s_{p},F_{p}\right)  $ such that  $s_{p}\in\left[  \omega\right]  ^{<\omega}$ and $F_{p}\in\mathcal{F}$, ordered by 
$p=\left(  s_{p},F_{p}\right) \leq q=\left(  s_{q},F_{q}\right)$
 if  $s_{q}\subseteq s_{p}$,  $F_{p}\subseteq F_{q}$ and 
$s_{p}\setminus s_{q}\subseteq F_{q}$.

\medskip 

If $G\subseteq\mathbb{M}\left(  \mathcal{F}\right)  $ is a generic filter, the
\emph{generic real of }$\mathbb{M}\left(  \mathcal{F}\right)  $\emph{ }is
defined as 
$$r_{G}=\bigcup
\left\{  s_{p}\mid\exists p=\left(  s_{p},F_{p}\right)  \in G\right\}  .$$ 
It
is easy to see that $r_{G}$ is a pseudointersection of $\mathcal{F}.$

\smallskip

The
following notion was introduced in
\cite{MadFamiliesSplittingFamiliesandLargeContinuum}:
Let $M\subseteq N$ be transitive models of {\sf ZFC } (we may assume that $N$
is a forcing extension of $M$). Let $\mathcal{A}=\left\{  A_{\alpha}\mid
\alpha\in\gamma\right\}  $ be an AD family in $M$ and $B\in N$ an
infinite subset of $\omega.$ We say that $\bigstar_{\mathcal{A},B}^{M,N}$
holds, if 
$$\forall h:\omega\times\left[  \gamma\right]  ^{<\omega
}\longrightarrow\omega \text{ such that }  h\in M, \ \forall m\in\omega\ \text{and }\forall F\in\left[
\gamma\right]  ^{<\omega} $$ 
$$ \exists n\geq m \ [n,h\left(
n,F\right)  )\setminus\bigcup_{\alpha\in F}
A_{\alpha}\subseteq B.$$

It is easy to see that if $\bigstar_{\mathcal{A},B}^{M,N}$ hold, then
$B\in\mathcal{I}\left(  \mathcal{A}\right)  ^{+}.$

The following is easy:

\begin{lem}
Let $M\subseteq N$ be transitive models of \emph{ZFC, }$\mathcal{A}=\left\{
A_{\alpha}\mid\alpha\in\gamma\right\}  \in M$ an \textsf{AD} family and $B\in
N$ such that $\bigstar_{\mathcal{A},B}^{M,N}$ holds. If $X\in
\mathcal{I}\left(  \mathcal{A}\right)  ^{+}\cap M$ then $B\cap X$ is infinite
(in $N$). \label{Estrellita pega} \emph{ }
\end{lem}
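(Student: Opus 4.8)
The goal is to prove Lemma \ref{Estrellita pega}: given $\bigstar_{\mathcal{A},B}^{M,N}$ and $X \in \mathcal{I}(\mathcal{A})^+ \cap M$, we must show $B \cap X$ is infinite in $N$. Here $\mathcal{I}(\mathcal{A})$ denotes the ideal generated by $\mathcal{A}$ together with the finite sets, so $X \in \mathcal{I}(\mathcal{A})^+$ means that $X$ is not almost covered by finitely many members of $\mathcal{A}$; equivalently, for every $F \in [\gamma]^{<\omega}$ the set $X \setminus \bigcup_{\alpha \in F} A_\alpha$ is infinite.

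The plan is to apply the hypothesis $\bigstar_{\mathcal{A},B}^{M,N}$ to a cleverly chosen function $h \in M$ that ``chases'' elements of $X$ lying outside the relevant union of the $A_\alpha$. First I would fix an arbitrary $m \in \omega$ and aim to produce an element of $B \cap X$ above $m$; since $m$ is arbitrary this yields infinitude. The function $h$ should be defined so that the interval $[n, h(n,F))$ is guaranteed to contain a point of $X \setminus \bigcup_{\alpha \in F} A_\alpha$: concretely, working inside $M$ (which is legitimate since both $X$ and $\mathcal{A}$ lie in $M$), for each $n$ and each $F$ let $h(n,F)$ be one more than the least element of $X \setminus \bigcup_{\alpha \in F} A_\alpha$ that is $\geq n$. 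This element exists precisely because $X \in \mathcal{I}(\mathcal{A})^+$, so that set is infinite and in particular unbounded; thus $h$ is a total function in $M$.

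Now I would unfold the $\bigstar$ condition with this $h$: for the fixed $m$ and for the specific finite set $F$ I care about, the hypothesis gives some $n \geq m$ with $[n, h(n,F)) \setminus \bigcup_{\alpha \in F} A_\alpha \subseteq B$. But by the construction of $h$, there is a point $k \in [n, h(n,F)) \cap X$ with $k \notin \bigcup_{\alpha \in F} A_\alpha$, namely the least such point witnessing the definition of $h(n,F)$. This $k$ then lies in the left-hand set, hence $k \in B$, and also $k \in X$ with $k \geq n \geq m$. Therefore $B \cap X$ contains an element above $m$.

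The one subtlety to handle carefully is the choice of $F$: the $\bigstar$ condition quantifies universally over $F \in [\gamma]^{<\omega}$, so I am free to pick any single $F$, and the argument above works with, say, $F = \emptyset$ (then $h(n,\emptyset)$ chases points of $X$ itself, and the conclusion $[n,h(n,\emptyset)) \subseteq B$ already forces a point of $X$ into $B$). I expect the main point requiring care to be making sure $h$ is genuinely a member of $M$ and total on all of $\omega \times [\gamma]^{<\omega}$ — this is where I use that $X, \mathcal{A} \in M$ together with $X \in \mathcal{I}(\mathcal{A})^+$ in $M$ to guarantee the relevant difference sets are infinite, so the ``least element $\geq n$'' is always defined. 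Once $h$ is legitimately in $M$, the rest is a direct extraction of the witness from the definitions, and since $m$ was arbitrary we conclude $B \cap X$ is infinite.
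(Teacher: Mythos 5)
Your core argument is correct, and since the paper states this lemma without proof (it is prefaced only by ``The following is easy''), your write-up in fact supplies the missing argument: working in $M$, set $h(n,F)$ equal to one more than the least element of $X\setminus\bigcup_{\alpha\in F}A_\alpha$ that is $\geq n$ (total precisely because $X$ is $\mathcal{I}(\mathcal{A})$-positive, and a member of $M$ because $X$ and the indexed family $\mathcal{A}$ are), apply $\bigstar_{\mathcal{A},B}^{M,N}$, and observe that the least witness used to define $h(n,F)$ lands in $[n,h(n,F))\setminus\bigcup_{\alpha\in F}A_\alpha\subseteq B$, hence in $B\cap X$ above $m$. Importantly, this extraction works for \emph{whatever} pair $(n,F)$ the $\bigstar$-condition supplies, since you defined $h$ on all of $\omega\times[\gamma]^{<\omega}$.

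Your final paragraph, however, should be discarded, and the fact that it seems available points at a typo in the paper rather than at a genuine simplification. As printed, the definition of $\bigstar_{\mathcal{A},B}^{M,N}$ does place $\forall F$ before $\exists n$, and under that literal reading your $F=\emptyset$ shortcut is valid --- but then, as you yourself observe, positivity of $X$ is never used, so the same argument shows that $B$ meets \emph{every} infinite set of $M$ infinitely often, in particular every $A_\alpha\in\mathcal{A}$. That is incompatible with how $\bigstar$ is used elsewhere in the paper: in Lemma \ref{estrellita sucesor}, $B=A_\beta$ is a Hechler generic column, which is almost disjoint from every member of $\mathcal{A}_\alpha^{G_\alpha}$, and that family lies in $M=V[G_\alpha]$. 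In the original source of this notion (Brendle--Fischer), $F$ is quantified \emph{existentially}: for every $h\in M$ and every $m\in\omega$ there exist $n\geq m$ \emph{and} $F\in[\gamma]^{<\omega}$ such that $[n,h(n,F))\setminus\bigcup_{\alpha\in F}A_\alpha\subseteq B$. Under that corrected reading you cannot choose $F$; you must cope with whichever $F$ is handed to you, which is exactly why $h$ must be defined on all pairs $(n,F)$ --- as you did --- and why $X$ must be positive rather than merely infinite. So keep your main construction, delete the ``I am free to pick any single $F$'' remark, and note that the lemma is genuinely about $\mathcal{I}(\mathcal{A})$-positive sets, not arbitrary infinite ones.
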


The following is Lemma 4 of
\cite{MadFamiliesSplittingFamiliesandLargeContinuum}:

\begin{lem}
[\cite{MadFamiliesSplittingFamiliesandLargeContinuum}]Let $\gamma\leq
\omega_{1}$ and $G\subseteq\mathbb{H}_{\gamma}$ a generic filter. For every
$\xi\leq\gamma,$ define $G_{\xi}=\mathbb{H}_{\xi}\cap G.$ If $\alpha\leq
\beta<\gamma,$ then $\bigstar_{\mathcal{A}_{\alpha}^{G_{\alpha}},A_{\beta}%
}^{V\left[  G_{\alpha}\right]  ,V\left[  G_{\beta+1}\right]  }$
holds.\label{estrellita sucesor}
\end{lem}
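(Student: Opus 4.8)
The plan is to unwind the definitions of $\bigstar^{M,N}_{\mathcal A,B}$ and of the Hechler generic AD family, and verify the combinatorial statement by a direct genericity (density) argument. Fix $\alpha\le\beta<\gamma$. Working over the intermediate model $V[G_\alpha]$, I want to show that the single generic set $A_\beta^{G_{\beta+1}}$ (which is added by the coordinate $\beta$ of $\mathbb H_\gamma$ beyond $\mathbb H_\alpha$) witnesses $\bigstar$ against the family $\mathcal A_\alpha^{G_\alpha}$. So let $h\in V[G_\alpha]$ be an arbitrary function $\omega\times[\gamma]^{<\omega}\to\omega$, and fix $m\in\omega$ and $F\in[\gamma]^{<\omega}$. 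I must produce some $n\ge m$ with $[n,h(n,F))\setminus\bigcup_{\xi\in F}A_\xi^{G_\alpha}\subseteq A_\beta^{G_{\beta+1}}$.

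First I would observe, using item (3) of the Hechler lemma, that $\mathbb H_\alpha$ is a regular suborder of $\mathbb H_\gamma$, so it is legitimate to regard $V[G_{\beta+1}]$ as a further forcing extension of $V[G_\alpha]$, and to treat $h$, $m$, $F$ as living in the ground part while arguing about genericity of the remaining coordinates. The key step is then the density computation: in the quotient forcing, I claim the set of conditions $p$ that already decide a witnessing $n$ is dense below any given condition. Concretely, given a condition $p$ with $\beta\in F_p$ and $n_p$ large, I extend $p$ to a condition $q$ by choosing an interval $[n,h(n,F))$ with $n=n_p\ge m$, setting $q(\beta,i)=1$ for every $i$ in this interval that does not already lie in some $A_\xi^{G_\alpha}$ with $\xi\in F$ (which I can read off below $n_q$ since those sets are in $V[G_\alpha]$), and setting $q(\delta,i)=0$ for all other relevant $\delta$. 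I then verify this $q$ is a legitimate extension: condition (2) in the definition of $\le$ on $\mathbb H_\gamma$ requires that on each new column $i$ at most one coordinate gets value $1$, and the construction assigns $1$ only on coordinate $\beta$, so there is no conflict. By the definition of $A_\beta^{G_{\beta+1}}$ as $\{i: \exists p\in G_{\beta+1}\,(p(\beta,i)=1)\}$, such a $q\in G_{\beta+1}$ forces precisely $[n,h(n,F))\setminus\bigcup_{\xi\in F}A_\xi^{G_\alpha}\subseteq A_\beta^{G_{\beta+1}}$.

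The one point requiring care, and what I expect to be the main obstacle, is making the density argument respect the two-model structure: the function $h$, the threshold $m$, and the finite set $F$ all belong to $V[G_\alpha]$, so the density must be established for the quotient $\mathbb H_\gamma/\mathbb H_\alpha$ over $V[G_\alpha]$, not naively over $V$. One must check that the values $A_\xi^{G_\alpha}\restriction n$ for $\xi\in F$ are indeed determined by $G_\alpha$ (hence available to decide which columns to fill), and that extending coordinate $\beta$ does not interfere with the already-fixed $\mathbb H_\alpha$-part of the condition—here regularity of the suborder is exactly what guarantees the extension remains compatible with $G_\alpha$. Once the density is verified in $V[G_\alpha]$, a standard genericity argument over the quotient yields, for each $m$ and $F$, a witnessing $n\ge m$, which is precisely $\bigstar^{V[G_\alpha],V[G_{\beta+1}]}_{\mathcal A_\alpha^{G_\alpha},A_\beta}$.
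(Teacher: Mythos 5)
Your density argument breaks down at exactly the point you flag as ``the one point requiring care,'' and it cannot be repaired as stated. The extension $q$ you build assigns value $0$ on every new row to every column $\delta\in F_p$ other than $\beta$. For columns $\delta\in F_p\cap\alpha$ this is illegal in the quotient: for $q$ to lie in $\mathbb{H}_{\beta+1}/G_\alpha$ its $\mathbb{H}_\alpha$-part must agree with $G_\alpha$, and the generic sets $A_\delta^{G_\alpha}$ are infinite, so they meet $[n_p,n_q)$ whenever that interval is long; zeroing those columns makes $q$ incompatible with $G_\alpha$. Regularity of $\mathbb{H}_\alpha$ in $\mathbb{H}_{\beta+1}$ does not help: it guarantees that the quotient is a sensible forcing, not that your particular $q$ belongs to it. If you repair this in the only possible way --- filling the columns $\delta\in F_p\cap\alpha$ on the new rows according to the generic sets --- you hit the deeper conflict: for a row $i\in[n,h(n,F))$ with $i\in A_\eta^{G_\alpha}$ for some $\eta\in(F_p\cap\alpha)\setminus F$ but $i\notin\bigcup_{\xi\in F}A_\xi^{G_\alpha}$, quotient membership forces $q(\eta,i)=1$ while your witnessing inclusion needs $q(\beta,i)=1$, and clause (2) of the order (the almost-disjointness promise of $p$, which applies to all pairs from $F_p$, hence to the pair $\{\eta,\beta\}$) forbids both. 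Since $A_\eta^{G_\alpha}\setminus\bigcup_{\xi\in F}A_\xi^{G_\alpha}$ is infinite, such rows genuinely occur in every candidate interval for suitable $h\in V[G_\alpha]$.

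This is not a technicality: with the definition of $\bigstar$ as printed in this paper (where $F$ is universally quantified, and note the paper cites the lemma from Brendle--Fischer without giving a proof) the statement is outright false. Fix $\xi_0<\alpha$, take $F=\emptyset$ and $h(n,\cdot)=\min\bigl(A_{\xi_0}^{G_\alpha}\setminus(n+1)\bigr)+1$; every interval $[n,h(n,\emptyset))$ then meets $A_{\xi_0}^{G_\alpha}$, so $\bigstar$ applied with $F=\emptyset$ would force $A_{\xi_0}^{G_\alpha}\cap A_\beta$ to be infinite, contradicting that $\mathcal{A}_{\beta+1}^{G_{\beta+1}}$ is an AD family. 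In Brendle--Fischer's actual property the finite set $F$ is existentially quantified together with $n$: for every $h\in M$ and every $m$ there exist $n\geq m$ \emph{and} $F\in[\gamma]^{<\omega}$ with $[n,h(n,F))\setminus\bigcup_{\xi\in F}A_\xi\subseteq B$ (this weaker form still suffices for Lemma \ref{Estrellita pega}). The correct density argument exploits precisely this freedom: given $p$ in the quotient, one takes the witnessing $F$ to contain $F_p\cap\alpha$, fills the columns below $\alpha$ generically, and puts $1$ in column $\beta$ exactly on the rows of the interval avoiding $\bigcup_{\xi\in F}A_\xi^{G_\alpha}$; clause (2) is then satisfied because those rows avoid every $A_\eta^{G_\alpha}$ with $\eta\in F_p\cap\alpha$. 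Your proof fixes $F$ in advance and never enlarges it, so it is attempting to prove a statement that is false as literally formulated; any correct argument must let $F$ grow to absorb $F_p\cap\alpha$.
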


The following is a deep result of Brendle and Fischer (crucial lemma 7 of
\cite{MadFamiliesSplittingFamiliesandLargeContinuum}):

\begin{proposition}
[Brendle, Fischer \cite{MadFamiliesSplittingFamiliesandLargeContinuum}]Let
$M\subseteq N$ be transitive models of \emph{ZFC, }$\mathcal{A}=\left\{
A_{\alpha}\mid\alpha\in\gamma\right\}  \in M$ an \textsf{AD} family and $B\in
N$ such that $\bigstar_{\mathcal{A},B}^{M,N}$ holds. Let
$\mathcal{U}\in M$ be an ultrafilter. There is an ultrafilter $\mathcal{W}\in
N$ such that the following holds:

\begin{enumerate}
\item $\mathcal{U}\subseteq\mathcal{W}$ (hence $\mathbb{M}\left(
\mathcal{U}\right)  \subseteq\mathbb{M}\left(  \mathcal{W}\right)  $).

\item If $L\subseteq\mathbb{M}\left(  \mathcal{U}\right)  $ is a maximal
antichain with $L\in M,$ then $L$ is also a maximal antichain of
$\mathbb{M}\left(  \mathcal{W}\right)  .$

\item If $G_{\mathcal{W}}\subseteq\mathbb{M}\left(  \mathcal{W}\right)  $ is
an $\left(  N,\mathbb{M}\left(  \mathcal{W}\right)  \right)  $-generic filter,
then $G_{\mathcal{U}}=G_{\mathcal{W}}\cap\mathbb{M}\left(  \mathcal{U}\right)
$ is an $\left(  M,\mathbb{M}\left(  \mathcal{U}\right)  \right)  $-generic filter.

\item $r_{G_{\mathcal{W}}}=r_{G_{\mathcal{U}}}$ (in particular,
$r_{G_{\mathcal{W}}}\in M[G_{\mathcal{U}}]$).

\item $\bigstar_{\mathcal{A},B}^{M[G_{\mathcal{U}}],\text{ }N[G_{\mathcal{W}%
}]}$ holds.
\end{enumerate}
\end{proposition}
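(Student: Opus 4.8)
The plan is to isolate the two genuinely different tasks hidden in the statement: producing the ultrafilter $\mathcal W$ so that $\mathbb M(\mathcal U)$ is a complete suborder of $\mathbb M(\mathcal W)$ (items (1)--(4)), and then showing that this particular $\mathcal W$ transports the combinatorial property $\bigstar$ to the generic extensions (item (5)). The structural fact I would lean on throughout is that $\mathbb M(\mathcal F)$ is $\sigma$-centered with the stems as centering data: any two conditions sharing a stem $s$, with tails in $\mathcal F$, have a common refinement obtained by intersecting the tails. In particular two conditions of $\mathbb M(\mathcal U)$ are compatible exactly when they admit a common \emph{stem} $t$, a filter--independent requirement; and the same computation shows that compatibility inside $\mathbb M(\mathcal W)$ of two conditions coming from $\mathbb M(\mathcal U)$ coincides with their compatibility inside $\mathbb M(\mathcal U)$, since the relevant tail $U\cap U'$ already lies in $\mathcal U$. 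Hence incompatibility is preserved automatically, and the only real content of a complete embedding is that every maximal antichain $L\subseteq\mathbb M(\mathcal U)$ with $L\in M$ stays predense in $\mathbb M(\mathcal W)$; unwinding the stem description, this asks that for every $(s,W)$ with $W\in\mathcal W$ there is $(s_\ell,U_\ell)\in L$ admitting a common stem $t$ with $t\setminus s\subseteq W$.

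For the construction of $\mathcal W$ I would build it in $N$ by transfinite recursion along an enumeration of $\mathcal P(\omega)^N$, starting from $\mathcal U$ and adding at each step either $X$ or $\omega\setminus X$, while maintaining the invariant that every finite intersection of the sets chosen so far reaps the stem family $\{s_\ell:\ell\in L\}$ of every maximal antichain $L\in M$ of $\mathbb M(\mathcal U)$. The base case is free: if $U\in\mathcal U$ then $(\emptyset,U)$ is compatible in $\mathbb M(\mathcal U)$ with some $\ell\in L$ by maximality, which forces $s_\ell\subseteq U$, so the members of $\mathcal U$ already reap every such stem family; and since $\mathcal U$ is an ultrafilter of $M$ one has $\mathcal W\cap M=\mathcal U$ for free. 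The work is the successor step, where I must show that adding $X$ or its complement preserves the invariant; this successor step is where I anticipate having to call on $\bigstar_{\mathcal A,B}^{M,N}$, through Lemma \ref{Estrellita pega}, to guarantee that the positive sets one is forced to throw in still meet all the ground--model stem families. Granting the invariant, items (1) and (2) hold, and item (3) is then the standard fact that if $G_{\mathcal W}$ is $(N,\mathbb M(\mathcal W))$--generic and every maximal antichain of $\mathbb M(\mathcal U)$ from $M$ remains maximal, then $G_{\mathcal U}=G_{\mathcal W}\cap\mathbb M(\mathcal U)$ meets every dense set of $M$ and is a filter, i.e.\ is $(M,\mathbb M(\mathcal U))$--generic.

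Item (4) I would get by a short density computation: the sets $\{(s,U)\in\mathbb M(\mathcal U):\max s\ge k\}$ are dense and lie in $M$, so by item (3) the stems occurring in $G_{\mathcal U}$ are cofinal; since whether $k\in r_{G_{\mathcal W}}$ is already settled by any condition of $G_{\mathcal W}$ whose stem passes $k$, and since $G_{\mathcal U}\subseteq G_{\mathcal W}$, choosing such a condition inside $G_{\mathcal U}$ shows $k\in r_{G_{\mathcal W}}$ iff $k$ lies in that stem iff $k\in r_{G_{\mathcal U}}$. For item (5), fix $h\in M[G_{\mathcal U}]$ with an $M$--name $\dot h$. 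Because conditions of $\mathbb M(\mathcal U)$ sharing a stem are compatible, whenever some tail at a stem $t$ forces a value on $\dot h(n,F)$ that value is unique; so in $M$ I can define $g$ by letting $g(n,F)$ bound all such values attached to stems $t$ with $\max t\le n$ (of which there are only finitely many). Applying $\bigstar_{\mathcal A,B}^{M,N}$ to $g\in M$ yields, for each $m$ and $F$, cofinally many $n$ with $[n,g(n,F))\setminus\bigcup_{\alpha\in F}A_\alpha\subseteq B$. I would then run a density argument inside $\mathbb M(\mathcal W)$ over $N$, which is legitimate since $B$ and the $A_\alpha$ live in $N$: the set of conditions forcing ``$\dot h(n,F)=v$ for some $n\ge m$ decided by a stem $t$ with $\max t\le n$, where $v\le g(n,F)$ and $[n,v)\setminus\bigcup_{\alpha\in F}A_\alpha\subseteq B$'' should be shown dense, and genericity of $G_{\mathcal W}$ then delivers the interval required by $\bigstar_{\mathcal A,B}^{M[G_{\mathcal U}],N[G_{\mathcal W}]}$; the point is that a short deciding stem keeps the true value $h(n,F)$ below $g(n,F)$, so the shorter interval $[n,h(n,F))$ inherits the containment in $B$.

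The main obstacle I expect is exactly the interface between the two places where $\bigstar$ is used, and it is made delicate by the fact that $\mathcal U$ is an \emph{arbitrary} ultrafilter: $\mathbb M(\mathcal U)$ need not have the Prikry (pure--decision) property, so I cannot decide $\dot h(n,F)$ by merely shrinking the tail at a fixed stem, nor can I capture a whole maximal antichain below a single pure extension by a naive fusion, since a diagonal intersection of $\mathcal U$--sets need not return to $\mathcal U$. Consequently both the successor step in the construction of $\mathcal W$ and the density claim in item (5) must avoid pure decision and instead match the $\bigstar$--intervals against \emph{short} deciding stems grown from the generic; arranging that the set of $\bigstar$--good $n$ meets the set of $n$ already decided by the current stem is the crux, and it is precisely the reaping strength of $\bigstar_{\mathcal A,B}^{M,N}$ (via Lemma \ref{Estrellita pega}) that I would invoke to force this intersection to be nonempty.
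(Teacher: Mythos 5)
First, a point of reference: the paper offers no proof of this proposition at all. It is imported verbatim as the ``crucial lemma'' (Lemma 7) of Brendle and Fischer \cite{MadFamiliesSplittingFamiliesandLargeContinuum}, and the only commentary the paper adds is that items (3) and (4) follow from items (1) and (2). The parts of your proposal that are actually carried out are exactly that formal layer, and they are correct: compatibility of two conditions of $\mathbb{M}(\mathcal{U})$ is computed from the conditions themselves, hence is absolute between $\mathbb{M}(\mathcal{U})$ and $\mathbb{M}(\mathcal{W})$, so (2) reduces to predensity of ground-model maximal antichains; (3) then follows; and your density argument identifying $r_{G_{\mathcal{W}}}$ with $r_{G_{\mathcal{U}}}$ works.

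The mathematical content of the lemma, however, lies precisely in the two steps you leave as declarations of intent, and both are genuine gaps. (a) In the recursion producing $\mathcal{W}$ you must prove the successor step: that for each $X\in\mathcal{P}(\omega)^{N}$ one of $X$, $\omega\setminus X$ can be added while every $M$-maximal antichain of $\mathbb{M}(\mathcal{U})$ stays predense. You say you ``anticipate having to call on'' $\bigstar_{\mathcal{A},B}^{M,N}$ via Lemma \ref{Estrellita pega}, but no argument is given, and none is routine: $\bigstar$ is a statement about intervals $[n,h(n,F))$ missing $\bigcup_{\alpha\in F}A_{\alpha}$ being contained in the one fixed set $B$; it is not a reaping principle applicable to an arbitrary $X\in N$, so the bridge from $\bigstar$ to your invariant is exactly what has to be constructed, and it is not. (b) For item (5), your ground-model function $g$ bounds only those values of $\dot{h}(n,F)$ decided by stems $t$ with $\max t\le n$; since, as you yourself concede, $\mathbb{M}(\mathcal{U})$ for an arbitrary ultrafilter has no pure decision, the antichain element that predensity hands you below a given $(s,W)\in\mathbb{M}(\mathcal{W})$ may carry a long stem and decide a value far above $g(n,F)$, and nothing in your sketch produces a $\bigstar$-good $n\ge m$ whose value is decided by a short stem compatibly with $(s,W)$. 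These two gaps are not independent: the obstruction in (b) is the reason one cannot first fix an arbitrary $\mathcal{W}$ satisfying (1)--(2) and then hope to verify (5) afterwards; the ultrafilter has to be built so that antichain preservation and the transfer of $\bigstar$ are secured simultaneously, by an invariant entangling the dense sets of $\mathbb{M}(\mathcal{U})$ in $M$ with the $\bigstar$-good intervals at every stage of the recursion. That simultaneous construction is the substance of what the paper calls a deep result of Brendle and Fischer, and it is absent from your proposal: what you have written is an accurate map of where the difficulty lies, together with correct proofs of the routine implications, but not a proof.
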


Note that points 3 and 4 follow from points 1 and 2. It is important to note
that in general (in $N$) $\mathbb{M}\left(  \mathcal{U}\right)  $ will not be
a regular suborder of $\mathbb{M}\left(  \mathcal{W}\right)  $ (except in the
trivial case where $\mathcal{U}=\mathcal{W}$). This is because in point 2, we
only have the results for the maximal antichains that are in $M,$ but it may
fail for those that are in $N.$

\medskip

Let $\kappa$ and $\lambda$ be two cardinals. We will say that
$$\left(  \left\langle \mathbb{P}_{\alpha,\beta}\mid\alpha\leq\kappa,\beta
\leq\lambda\right\rangle ,\langle\mathbb{\dot{Q}}_{\alpha,\beta}\mid\alpha
\leq\kappa,\beta<\lambda\mathbb{\rangle}\right)  $$ is a \emph{standard matrix
iteration }if the following holds for every $\alpha\leq\kappa,\beta\leq
\lambda:$

\begin{enumerate}
\item If $\beta<\lambda,$ then$\ \mathbb{\dot{Q}}_{\alpha,\beta}$ is a
$\mathbb{P}_{\alpha,\beta}$-name for a partial order with the countable chain condition.

\item If $\beta<\lambda,$ then$\ \mathbb{P}_{\alpha,\beta+1}=\mathbb{P}%
_{\alpha,\beta}\ast\mathbb{\dot{Q}}_{\alpha,\beta}.$

\item If $\xi<\beta,$ then $\mathbb{P}_{\alpha,\xi}$ is a regular suborder of
$\mathbb{P}_{\alpha,\beta}.$

\item If $\beta$ is a limit ordinal, then $\mathbb{P}_{\alpha,\beta}$ is the
finite support iteration of $\langle\mathbb{P}_{\alpha,\xi}\mid\xi
<\beta\rangle.$

\item If $\eta<\alpha,$ then $\mathbb{P}_{\eta,\beta}$ is a regular suborder
of $\mathbb{P}_{\alpha,\beta}.$

\item If $\alpha$ is a limit ordinal, then $\mathbb{P}_{\alpha,0}$ is the
finite support iteration of $\langle\mathbb{P}_{\eta,0}\mid\eta<\alpha
\rangle.$

\item If $p\in\mathbb{P}_{\kappa,\beta},$ then there is $\gamma<\kappa$ such
that $p\in\mathbb{P}_{\gamma,\beta}.$

\item If $\dot{f}$ is a $\mathbb{P}_{\kappa,\beta}$-name for a real, then
there is $\gamma<\kappa$ such that $\dot{f}$ is a $\mathbb{P}_{\gamma,\beta}$-name.
\end{enumerate}

In the above situation, given $\alpha\leq\kappa,\beta\leq\lambda,$ we 
denote by $V_{\alpha\beta}$ the extension of $V$ by forcing with
$\mathbb{P}_{\alpha,\beta}.$

\medskip 

We now define $\left(  \left\langle \mathbb{P}_{\alpha,\beta}\mid\alpha
\leq\omega_{1},\beta\leq\omega_{2}\right\rangle ,\langle\mathbb{\dot{Q}%
}_{\alpha,\beta}\mid\alpha\leq\omega_{1},\beta<\omega_{2}\mathbb{\rangle
}\right)  $ such that for every $\alpha\leq\omega_{1}$ and $\beta\leq
\omega_{2}$ with the following properties:

\begin{enumerate}
\item $\mathbb{P}_{\alpha0}=\mathbb{H}_{\alpha}.$

\item Let $\mathcal{A}_{\alpha}=\left\{  A_{\xi}\mid\xi<\alpha\right\}  $ be
the \textsf{AD} family added by $\mathbb{H}_{\alpha}.$

\item For every $\beta<\omega_{2},$ there is a sequence $\langle
\mathcal{U}_{\gamma\beta}\mid\gamma\leq\omega_{1}\rangle$ with the following properties:

\begin{enumerate}
\item $\mathcal{U}_{\gamma\beta}\in V_{\gamma\beta}$ and it is an ultrafilter
in such model.

\item For every $\gamma<\delta\leq\omega_{1}$ the following holds:

\begin{enumerate}
\item $\mathcal{U}_{\gamma\beta}\subseteq\mathcal{U}_{\delta\beta}.$

\item If $L\subseteq\mathbb{M}\left(  \mathcal{U}_{\gamma\beta}\right)  $ is a
maximal antichain with $L\in V_{\gamma\beta},$ then $L$ is also a maximal
antichain of $\mathbb{M}(\mathcal{U}_{\delta\beta}).$

\item If $\bigstar_{\mathcal{A}_{\gamma},A_{\gamma}}^{V_{\gamma\beta
},V_{(\gamma+1)\beta}}$, and $H\subseteq\mathbb{M}\left(  \mathcal{U}%
_{(\gamma+1)\beta}\right)  $ is a   $(V_{(\gamma+1)\beta},\mathbb{M}\left(
\mathcal{U}_{(\gamma+1)\beta}\right)  )$
-generic filter, then $\bigstar
_{\mathcal{A}_{\gamma},A_{\gamma}}^{V_{\gamma\beta}[H],V_{(\gamma+1)\beta}%
[H]}$ holds. \label{paso inductivo para estrellita}
\end{enumerate}
\end{enumerate}

\item If $\beta<\omega_{2},$ then $\mathbb{P}_{\alpha,\beta}\Vdash
``\mathbb{\dot{Q}}_{\alpha,\beta}=\mathbb{\dot{M}}\left(  \mathcal{U}%
_{\alpha\beta}\right)  \mrq$ and $\mathbb{P}_{\alpha,\beta
+1}=\mathbb{P}_{\alpha,\beta}\ast\mathbb{\dot{M}}\left(  \mathcal{U}%
_{\alpha\beta}\right)  .$

\item If $\beta<\omega_{2}$ and $r_{\beta}$ is the $\left(  V_{\omega_{1}%
\beta},\mathbb{M}\left(  \mathcal{U}_{\omega_{1}\beta}\right)  \right)
$-generic real, then $r_{\beta}\in\mathcal{U}_{0\left(  \beta+1\right)  }.$

\item If $\beta<\omega_{2}$ is a limit ordinal, then $\left\{  r_{\eta}%
\mid\eta<\beta\right\}  \subseteq\mathcal{U}_{0\beta}.$
\label{ultrafiltro en limite}
\end{enumerate}

By the construction, it follows that $\left\{  r_{\beta}\mid\beta<\omega
_{2}\right\}  $ is a $\subseteq^{\ast}$-decreasing sequence (this is why point
\ref{ultrafiltro en limite} makes sense). The main point is of course, that
our 
$$\left(  \left\langle \mathbb{P}_{\alpha,\beta}\mid\alpha\leq\omega
_{1},\beta\leq\omega_{2}\right\rangle ,\langle\mathbb{\dot{Q}}_{\alpha,\beta
}\mid\alpha\leq\omega_{1},\beta<\omega_{2}\mathbb{\rangle}\right)  $$ is a
standard matrix iteration. This follows by the same arguments as in
\cite{UltrafilterswithSmallGeneratingSets} or
\cite{MadFamiliesSplittingFamiliesandLargeContinuum}. We leave the details to
the reader.

We can now prove the following:

\begin{theorem}
There is a model of \emph{ZFC }in which $\mathfrak{a}=\omega_{1}$ and there is
an ultrafilter $\mathcal{W}$ such that $\mathfrak{p}\left(  \mathcal{W}%
\right)  =\mathfrak{c}=\omega_{2}.$
\end{theorem}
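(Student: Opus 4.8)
The plan is to work over a ground model $V \models \mathsf{GCH}$, build the matrix iteration exactly as described, and take $W = V_{\omega_1\omega_2}$ as the final model. First I would pin down the continuum: since $\mathbb{P}_{\omega_1\omega_2}$ is a finite-support (hence ccc) matrix iteration of size $\omega_2$, a nice-name count under $\mathsf{GCH}$ gives $\mathfrak c \leq \omega_2$, while the $\subseteq^*$-decreasing sequence $\langle r_\beta : \beta < \omega_2\rangle$ of Mathias reals witnesses $\mathfrak c \geq \omega_2$; hence $\mathfrak c = \omega_2$ in $W$.

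For the ultrafilter, the candidate $\mathcal W$ is the filter generated by the chain $\{r_\beta : \beta < \omega_2\}$, and the key step is to show this filter is an ultrafilter in $W$. Given any $X \in \mathcal P(\omega) \cap W$, by item (8) of the definition of a standard matrix iteration $X$ has a $\mathbb{P}_{\gamma_0,\omega_2}$-name for some $\gamma_0 < \omega_1$, so $X \in V_{\gamma_0\omega_2}$; since $\mathbb{P}_{\gamma_0,\omega_2}$ is a ccc finite-support iteration over $V_{\gamma_0 0}$ and $\mathrm{cf}(\omega_2) > \omega$, the real $X$ already appears in $V_{\gamma_0\beta_0}$ for some $\beta_0 < \omega_2$. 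Now $\mathcal U_{\gamma_0\beta_0}$ is an ultrafilter of $V_{\gamma_0\beta_0}$, so it decides $X$; by the vertical coherence $\mathcal U_{\gamma_0\beta_0} \subseteq \mathcal U_{\omega_1\beta_0}$, and $r_{\beta_0}$ is a pseudointersection of $\mathcal U_{\omega_1\beta_0}$, whence $r_{\beta_0} \subseteq^* X$ or $r_{\beta_0} \subseteq^* \omega\setminus X$. Thus $\mathcal W$ is an ultrafilter. Since $\langle r_\beta\rangle$ is a $\subseteq^*$-decreasing chain of length $\omega_2 = \mathfrak c$ generating $\mathcal W$ (cf.\ \ref{ultrafiltro en limite}), the characterization recalled above gives $\mathfrak p(\mathcal W) = \mathfrak c$, i.e.\ $\mathcal W$ is a simple $P_{\mathfrak c}$-point.

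For $\mathfrak a = \omega_1$, the candidate is the Hechler-generic family $\mathcal A_{\omega_1} = \{A_\xi : \xi < \omega_1\}$, which is \textsf{MAD} in $V_{\omega_1 0}$ and has size $\omega_1$; I must show it stays maximal in $W$. Fixing $\gamma < \omega_1$, I would prove by induction on $\beta \leq \omega_2$ that $\bigstar_{\mathcal A_\gamma, A_\gamma}^{V_{\gamma\beta}, V_{(\gamma+1)\beta}}$ holds: the base case $\beta = 0$ is Lemma \ref{estrellita sucesor}; the successor step is exactly the content of \ref{paso inductivo para estrellita} (which rests on the preservation Proposition of Brendle and Fischer quoted above); and the limit step is routine, since any potential witness $h$ lives in some $V_{\gamma\beta'}$ by ccc reflection and the defining relation of $\bigstar$ is absolute. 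At $\beta = \omega_2$ this yields $\bigstar_{\mathcal A_\gamma, A_\gamma}^{V_{\gamma\omega_2}, V_{(\gamma+1)\omega_2}}$ for every $\gamma < \omega_1$. Now take any $X \in [\omega]^\omega \cap W$; by item (8) again $X \in V_{\gamma\omega_2}$ for some $\gamma < \omega_1$. If $X \in \mathcal I(\mathcal A_\gamma)^+$, then Lemma \ref{Estrellita pega} gives that $A_\gamma \cap X$ is infinite; otherwise $X$ is almost covered by finitely many $A_{\xi_i}$ with $\xi_i < \gamma$ and, being infinite, meets one of them infinitely. Either way $X$ is not almost disjoint from $\mathcal A_{\omega_1}$, so $\mathcal A_{\omega_1}$ is \textsf{MAD} and $\mathfrak a = \omega_1$.

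I expect the main obstacle to be the preservation of $\bigstar$ — equivalently, of the maximality of $\mathcal A_{\omega_1}$ — across the successor steps of the iteration, where one must push the relation through the Mathias forcings $\mathbb{M}(\mathcal U_{(\gamma+1)\beta})$ while keeping the two rows coherent; this is precisely where the deep Brendle–Fischer preservation proposition and the careful choice of the ultrafilter matrix in the construction do the work. By comparison the ultrafilter part is softer, its only delicate ingredient being that every real reflects below some $\gamma < \omega_1$ (item (8)), so that the vertical coherence of the $\mathcal U_{\gamma\beta}$ converts ``being decided by $\mathcal U_{\gamma_0\beta_0}$'' into ``being almost-refined by $r_{\beta_0}$''.
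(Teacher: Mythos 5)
Your overall architecture matches the paper's proof: the same forcing, the same candidate ultrafilter $\mathcal{W}$ (the filter generated by the tower of Mathias reals $r_\beta$), the same witness $\mathcal{A}_{\omega_1}$ for $\mathfrak{a}=\omega_1$, and the same induction on $\beta\leq\omega_2$ establishing $\bigstar_{\mathcal{A}_\gamma,A_\gamma}^{V_{\gamma\beta},V_{(\gamma+1)\beta}}$. Your ultrafilter argument is correct and in fact more detailed than the paper's sketch, and your reduction of the successor step to clause (c) of the construction (which rests on the Brendle--Fischer proposition) is exactly what the paper does.

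There is, however, a genuine gap in your limit step. You dismiss it as ``routine, since any potential witness $h$ lives in some $V_{\gamma\beta'}$ by ccc reflection,'' but that reflection argument is valid only when $\mathrm{cf}(\beta)>\omega$: a nice name for a real consists of countably many conditions, each living in some $\mathbb{P}_{\gamma,\beta'}$ with $\beta'<\beta$, and one needs uncountable cofinality to bound these stages below $\beta$. When $\mathrm{cf}(\beta)=\omega$ --- already at $\beta=\omega$ --- a finite support ccc iteration adds new reals at stage $\beta$ (Cohen reals over $\bigcup_{\beta'<\beta}V_{\gamma\beta'}$), so the relevant $h$ need not appear at any earlier stage and your argument says nothing about it. This is precisely the dangerous case: the Cohen-like reals appearing at countable-cofinality limits are exactly what threatens to diagonalize $\mathcal{A}_\gamma$, and your induction cannot avoid such limits since it must reach $\beta=\omega_2$. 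The paper accordingly splits the limit case by cofinality: uncountable cofinality is handled by the reflection you describe (applied to the sections $h_F$), while countable cofinality requires Lemma 12(1) of Brendle--Fischer, a nontrivial preservation theorem for $\bigstar$ at limit stages of matrix iterations. Relatedly, your closing assessment inverts the difficulty: the successor step is handed to you by the construction, whereas the limit preservation is the part your proposal leaves unproved. The remainder of your argument (the continuum computation, the ultrafilter, $\mathfrak{p}(\mathcal{W})=\mathfrak{c}$, and the final reflection-plus-maximality argument) is fine.
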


\begin{proof}
We start with a model $V$ of the Continuum Hypothesis. Let $G\subseteq
\mathbb{P}_{\omega_{1},\omega_{2}}$ be a generic filter (where $\mathbb{P}%
_{\omega_{1},\omega_{2}}$ is the forcing described above). We will show that
$V\left[  G\right]  $ is the model we are looking for. A straightforward
argument shows that $V\left[  G\right]  \models\mathfrak{c}=\omega_{2}.$

We argue in $V\left[G\right]  .$ Note that $R=\left\{  r_{\beta}\mid
\beta<\omega_{2}\right\}  $ is a decreasing tower, so it is centered.
Let $\mathcal{W}$ be the filter generated by $R.$ It is easy to see that
$\mathcal{W}$ is in fact an ultrafilter (this is because $r_{\beta}$ is a
$\mathbb{M}\left(  \mathcal{U}_{\omega_{1}\beta}\right)  $-generic real, for
more details, the reader may consult
\cite{UltrafilterswithSmallGeneratingSets}). Furthermore, since $\mathcal{W}$
is generated by a tower of length $\omega_{2},$ it follows that $\mathfrak{p}%
\left(  \mathcal{U}\right)  =\omega_{2}.$

It remains to prove that $\mathfrak{a}=\omega_{1}$ holds in $V\left[
G\right]  .$ This is the same argument as the one used in section 4 of
\cite{MadFamiliesSplittingFamiliesandLargeContinuum}. We include the argument
for completeness. We will prove that $\mathcal{A}_{\omega_{1}}=\left\{
A_{\alpha}\mid\alpha<\omega_{2}\right\}  $ is a \textsf{MAD} family in
$V\left[  G\right]  .$ We start with the following:

\begin{claim}
Let $\alpha<\omega_{1}$ and $\beta\leq\omega_{2}.$ Then $\bigstar
_{\mathcal{A}_{\alpha},\text{ }A_{\alpha}}^{V_{\alpha\beta},V_{\left(
\alpha+1\right)  \beta}}$ holds.
\end{claim}

\smallskip

Fix $\alpha<\omega_{1},$ we prove the claim by induction on $\beta.$ The case
$\beta=0$ follows by Lemma \ref{estrellita sucesor}. If the claim is true for
$\beta<\omega_{2},$ then it is also true for $\beta+1$ by point
\ref{paso inductivo para estrellita} in the definition of our iteration.
Finally, let $\beta$ be a limit ordinal and assume that the lemma is true for
every ordinal less than $\beta.$ If $\beta$ has uncountable cofinality, then
there is nothing to prove (note that if $h:\omega\times\left[  \alpha\right]
^{<\omega}\longrightarrow\omega$ is in $V_{\alpha\beta}$, for every
$F\in\left[  \alpha\right]  ^{<\omega}$ we can define $h_{F}:\omega
\longrightarrow\omega$ given by $h_{F}\left(  n\right)  =h\left(  n,F\right)
.$ For every $F\in\left[  \alpha\right]  ^{<\omega},$ there is $\eta<\beta$
such that $h_{F}\in V_{\alpha\eta}$). If $\beta$ has countable cofinality, the
claim follows by the Lemma 12 point 1 of
\cite{MadFamiliesSplittingFamiliesandLargeContinuum}). This proves the claim.

\smallskip

We will now prove the following:

\begin{claim}
$\mathcal{A}_{\omega_{2}}$ is a MAD family in $V\left[  G\right]  .$
\end{claim}

\smallskip 

Let $X\in\mathcal{I}\left(  \mathcal{A}_{\omega_{2}}\right)  ^{+}$ (in
$V\left[  G\right]  $). Since $\mathbb{P}_{\omega_{1},\omega_{2}}$ is a finite
support iteration of the c.c.c. partial orders $\langle\mathbb{P}_{\omega
_{1},\beta}\mid\beta<\omega_{2}\rangle,$ there is $\beta<\omega_{2}$ such that
$X\in V_{\omega_{1}\beta}.$ Furthermore, since we are using a standard matrix
iteration and $X$ is a real, there is $\alpha<\omega_{1}$ such that $X\in
V_{\alpha\beta}.$ Since $\bigstar_{\mathcal{A}_{\alpha},\text{ }A_{\alpha}%
}^{V_{\alpha\beta},V_{\left(  \alpha+1\right)  \beta}}$ holds and
$X\in\mathcal{I}\left(  \mathcal{A}_{\alpha}\right)  ^{+},$ by Lemma
\ref{Estrellita pega}, we have that $A_{\alpha}\cap X$ is infinite. This
finishes the proof.
\end{proof}

	\section{Non-pseudocompact MAD families}
	
	Here we prove that consistently there is a MAD family $\mathcal A$ of size $<\mathfrak c$ such that $\exp(\mathcal A)$ is not pseudocompact. This, of course, also provides a model where pseudocompact MAD families do not exist generically.
	
	Our example will be a MAD family over the infinite countable set $\triangle=\{(m,n)\in \omega\times\omega: n\leq m\}$. The elements of $\mathcal A$ will consist of graphs of partial functions.  The result easily follows from the following:
	
	\begin{theorem}\label{existenceModel} It is consistent with $\mathfrak c>\omega_2$
	that there are MAD families on $\{\mathcal A_\alpha:\alpha<\omega_1\}$ on $\omega$, a
	MAD family $\mathcal A$ of size $\omega_2$ on $\Delta$ consisting of partial functions below the diagonal such that 
	\begin{enumerate}
	    \item $\forall s\in \mathcal A \ \exists \alpha<\omega_1 \  \mathrm{\dom}(s) \in \mathcal A_\alpha$, 
	    \item $s\neq t\in \mathcal A \ \Rightarrow \  \mathrm{\dom}(s)\neq \mathrm{\dom}(t)$.
	\end{enumerate}
	and  for every family $\mathcal F$ of $\omega_1$-many partial functions below the diagonal there is a total function below the diagonal almost disjoint from every element of $\mathcal F$.
	\end{theorem}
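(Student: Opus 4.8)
The plan is to build the model in two forcing stages over a ground model of \textsf{CH}, separating the final ``diagonalization'' clause — which will follow from a cardinal inequality — from the construction of the families. I first dispose of that clause. After extending each $s$ in a given $\omega_1$-family $\mathcal F$ arbitrarily to a total below-diagonal function $\tilde s\in\prod_m(m+1)$, a total below-diagonal $g$ that is eventually different from every $\tilde s$ is almost disjoint from every $s\in\mathcal F$; so the clause reduces to: any $\omega_1$ elements of $\prod_m(m+1)$ admit a common eventually different function. By the bounded form of Bartoszy\'nski's characterisation of $\cov(\mathcal M)$ (valid for $\prod_m k_m$ whenever $k_m\to\infty$, here $k_m=m+1$) this holds whenever $\omega_1<\cov(\mathcal M)$. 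Hence it suffices to force $\cov(\mathcal M)=\mathfrak c>\omega_1$ in the final model; no preservation is needed, since the clause then holds for all $\omega_1$-families, old and new.

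For the families I work in an intermediate model $W=V[\text{$\omega_2$ Cohen reals}]$, where $\mathfrak c=\cov(\mathcal M)=\omega_2$ and $\mathfrak h=\omega_1$. Since $\mathfrak h=\omega_1$ there is a base tree of height $\omega_1$, and I would build it \emph{Cohen-indestructibly}: its levels $\{\mathcal A_\alpha:\alpha<\omega_1\}$ are to be MAD families on $\omega$ that stay maximal after adding Cohen reals, and its density and fatness are to be indestructible too, so that for every name $\dot D$ for an infinite subset of $\omega$ Cohen forcing forces $\omega_2$-many tree nodes to meet $\dot D$ infinitely. Here $\cov(\mathcal M)=\mathfrak c$ is the engine that lets the recursion meet all Cohen names. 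I record the tree dichotomy that any two nodes are either $\subseteq^{*}$-comparable or almost disjoint: it is what keeps the almost-disjointness bookkeeping for $\mathcal A$ under control.

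Next I build $\mathcal A$ on $\Delta$ by a recursion of length $\omega_2=\mathfrak c$ enumerating all $2^{<\omega}$-names for infinite subsets of $\Delta$ (there are only $\omega_2$ of them). At a stage handling a name $\dot X$ that is forced to be almost disjoint from the part of $\mathcal A$ built so far — the only case needing attention — I use the indestructible fatness to pick a \emph{fresh} tree node $A$ (not yet used as a domain) meeting $\dot X$ in infinitely many columns, and define a below-diagonal partial function $s_{\dot X}$ with $\dom(s_{\dot X})=A$ that reaps $\dot X$, forcing $|\dot X\cap s_{\dot X}|=\omega$. On the columns where $s_{\dot X}$ follows $\dot X$ it is automatically eventually different from every earlier function on the relevant overlap, because $\dot X$ is forced almost disjoint from those functions; on the remaining columns of $A$ I set $s_{\dot X}$ eventually different from the earlier functions sharing an (necessarily $\subseteq^{*}$-nested) overlap with $A$. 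Thus $\mathcal A$ stays almost disjoint, conditions (1)--(2) hold since each domain is a single node of some $\mathcal A_\alpha$ used once, and reaping every such $\dot X$ makes $\mathcal A$ Cohen-indestructibly MAD on $\Delta$ of size $\omega_2$. Finally I add $\omega_3$ Cohen reals: this yields $\mathfrak c=\omega_3>\omega_2$ and $\cov(\mathcal M)=\mathfrak c>\omega_1$ (so the clause of the first paragraph holds), indestructibility keeps $\mathcal A$ and every $\mathcal A_\alpha$ MAD, while almost-disjointness and (1)--(2) are absolute and cardinals are preserved, so $|\mathcal A|=\omega_2$.

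The hard part is the construction in $W$ of the Cohen-indestructible base tree with indestructible density and fatness: it is this that guarantees, for every name $\dot X$, a supply of $\omega_2$-many candidate fresh nodes meeting $\dot X$ in infinitely many columns, which is exactly what the recursion needs in order to keep choosing unused domains for $\omega_2$ steps. A single MAD family will not do, because a name can be met infinitely by essentially one of its members; the refining, fat tree is what provides the abundance of admissible domains, and carrying its Cohen-indestructibility through the recursion — so the whole configuration survives blowing $\mathfrak c$ up past $\omega_2$ — is the technical heart. The second, more routine, difficulty is the indestructible reaping of a name within a single prescribed node, complicated by the one-function-per-domain constraint; restricting attention to names forced almost disjoint from the current family is what makes this compatible with almost-disjointness. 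An alternative to the two-stage approach would fold the Cohen reals and a Hechler-type forcing generically adding the structured family into a single finite support ccc iteration, in the spirit of the previous section.
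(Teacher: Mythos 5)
Your reduction of the last clause is where the proposal breaks, and the error is fatal to the whole architecture: you have Bartoszy\'nski's characterisation backwards. For a fixed $f\in\prod_m(m+1)$, the set of $g\in\prod_m(m+1)$ that agree with $f$ at some point beyond any prescribed $m$ is open dense, so the set of $g$ \emph{infinitely often equal} to $f$ is comeager and the set of $g$ \emph{eventually different} from $f$ is meager. Consequently $\cov(\mathcal M)>\omega_1$ yields, for every $\omega_1$-family, a single $g$ infinitely often equal to \emph{all} of its members --- the exact opposite of what you need; ``every $\omega_1$-family admits a common eventually different function'' is the uniformity statement $\operatorname{non}(\mathcal M)>\omega_1$, and Cohen forcing forces $\operatorname{non}(\mathcal M)=\omega_1$. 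In fact your final model provably \emph{fails} the last clause of the theorem. Read off the first $\omega_1$ Cohen coordinates generic total below-diagonal functions $c_\alpha$, $\alpha<\omega_1$ (Cohen forcing is countable and atomless, hence equivalent to the poset of finite below-diagonal partial functions). Any total below-diagonal $g$ in the final model lies in $V[G\upharpoonright A]$ for some countable set $A$ of coordinates; pick $\alpha\in\omega_1\setminus A$. For any finite condition $p$ and any $k$ one may extend $p$ by setting $q(n)=g(n)$ for some $n\ge k$ not in $\dom(p)$ --- a legal move since $g(n)\le n$ --- so by genericity $\{n: c_\alpha(n)=g(n)\}$ is infinite. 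Thus $\{c_\alpha:\alpha<\omega_1\}$ is an $\omega_1$-family of (total, hence partial) below-diagonal functions admitting \emph{no} almost disjoint total below-diagonal function. This also shows that your claim ``no preservation is needed'' cannot be repaired: the clause quantifies over all $\omega_1$-families of the final model, and the new Cohen reals themselves constitute a bad family, over any intermediate model whatsoever.

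For contrast, the paper secures this clause not by a cardinal-invariant hypothesis in the final model but by cofinal diagonalisation inside the forcing: it runs a ccc finite-support iteration of length $\omega_2$ over a ground model of $\mathfrak c=\omega_3$, and at cofinally many stages forces with $\mathbb{E}_\Delta$, whose generic is a total below-diagonal function almost disjoint from every total (hence, after arbitrary totalisation, every partial) below-diagonal function of the current intermediate model. Since the iteration is ccc and $\operatorname{cf}(\omega_2)>\omega_1$, every $\omega_1$-family of the final model appears at a bounded stage and is killed by a later $\mathbb{E}_\Delta$-generic; even the Cohen reals that finite-support limits inevitably add are caught this way. The structured MAD families are built simultaneously, by Mathias-type forcings $\mathbb{M}(\mathcal A_\gamma^\xi)$ generating the $\omega_1$-many MAD families on $\omega$ and by $\mathbb{E}_\Delta(\mathcal B_\xi,A_\gamma^\beta)$ attaching to each used set $A_\gamma^\beta$ exactly one below-diagonal function with that domain, maximality of $\mathcal A$ resting on the elementary-submodel ``copying'' lemma that $\bigcap_{\gamma<\omega_1}\mathcal I(\mathcal A^\gamma)=[\omega]^{<\omega}$. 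Separately, your second stage --- a Cohen-indestructible base tree with $\omega_1$ MAD levels and ``indestructible fatness'' --- is asserted rather than proved, and you yourself flag it as the technical heart; but even granting it in full, the first gap is decisive: any strategy whose final step is adding uncountably many Cohen reals is self-defeating for this theorem.
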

	
	We shall postpone the proof of the theorem and first show that it suffices to prove the desired result.
	
	\begin{theorem}
	It is relatively consistent with {\sf ZFC} that there is a MAD family $\mathcal A$ of size $<\mathfrak c$ such that $\exp(\Psi(\mathcal A))$ is not pseudocompact.
	\end{theorem}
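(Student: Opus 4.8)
The plan is to fix, using Theorem~\ref{existenceModel}, a model with $\mathfrak c>\omega_2$ carrying MAD families $\{\mathcal A_\alpha:\alpha<\omega_1\}$ on $\omega$ and a MAD family $\mathcal A$ of size $\omega_2<\mathfrak c$ on $\Delta=\{(m,n):n\le m\}$ satisfying (1)--(2) and the stated covering property, and then to show that this $\mathcal A$ witnesses the theorem. Identifying the countable set $\Delta$ with $\omega$, by Proposition~\ref{NeccessaryAndSufficientGood} it suffices to exhibit a single pairwise disjoint sequence of finite nonempty subsets of $\Delta$ with no accumulation point. I would take the \emph{columns} $F_m=\{(m,n):n\le m\}$; these are finite, nonempty and pairwise disjoint, and the point of the choice is that an element of $\prod_{m}F_m$ is exactly (the graph of) a total function below the diagonal. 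Writing each $A\in\mathcal A$ as the graph of a partial function $s_A$ below the diagonal, note that $I_A:=\{m:F_m\cap A\neq\emptyset\}=\dom(s_A)$, since $A$ meets the $m$-th column precisely when $m\in\dom(s_A)$.

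Next I would assume for contradiction that $L$ is an accumulation point of $(F_m)$. Since each neighborhood $\{p\}^-$ of a point $p\in\Delta$ meets at most one $F_m$, we must have $L\subseteq\mathcal A$, so $L$ is identified with a family $\{s_A:A\in L\}$ of partial functions below the diagonal. The crux is to bound $|L|\le\omega_1$. Indeed, for distinct $A_1,A_2\in L$ the set $(\{A_1\}\cup A_1)^-\cap(\{A_2\}\cup A_2)^-$ is a hyperspace neighborhood of $L$, hence contains $F_m$ for infinitely many $m$; each such $m$ lies in $I_{A_1}\cap I_{A_2}=\dom(s_{A_1})\cap\dom(s_{A_2})$, so these domains have infinite intersection. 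Using property (1), choose $\alpha_A<\omega_1$ with $\dom(s_A)\in\mathcal A_{\alpha_A}$; if two distinct $A_1,A_2\in L$ had $\alpha_{A_1}=\alpha_{A_2}$, then by (2) $\dom(s_{A_1})$ and $\dom(s_{A_2})$ would be distinct members of the MAD family $\mathcal A_{\alpha_{A_1}}$ and thus almost disjoint, contradicting the previous sentence. Hence $A\mapsto\alpha_A$ is injective and $|L|\le\omega_1$.

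With the bound in hand, I would apply the covering property of Theorem~\ref{existenceModel} to the family $\{s_A:A\in L\}$ of at most $\omega_1$ partial functions below the diagonal, obtaining a total function $f$ below the diagonal whose graph meets each $A\in L$ in a finite set. Set $W=\bigcup_{A\in L}\big(\{A\}\cup(A\setminus\mathrm{graph}(f))\big)$. Since $A\cap\mathrm{graph}(f)$ is finite for each $A$, each $\{A\}\cup(A\setminus\mathrm{graph}(f))$ is a basic open neighborhood of $A$ in $\Psi(\mathcal A)$, so $W$ is open and $L\subseteq W$; thus $W^+$ is a neighborhood of $L$ in $\exp(\Psi(\mathcal A))$. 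But $W\cap\mathrm{graph}(f)=\emptyset$ by construction, so $(m,f(m))\in F_m\setminus W$ for every $m$, whence no $F_m$ is contained in $W$ and $W^+$ contains no term of the sequence. This contradicts $L$ being an accumulation point; therefore $(F_m)$ has none and $\exp(\Psi(\mathcal A))$ is not pseudocompact. The main obstacle is precisely the cardinality bound $|L|\le\omega_1$: the covering property only produces an escaping function $f$ against $\omega_1$-many partial functions, so without capping $|L|$ (which could a priori be as large as $\mathfrak c$) the final step fails, and it is exactly the structural hypotheses (1), (2) together with the maximality of the $\mathcal A_\alpha$ that force $|L|\le\omega_1$.
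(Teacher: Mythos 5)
Your proof is correct and follows essentially the same route as the paper: the same sequence of columns, the same use of properties (1) and (2) of Theorem~\ref{existenceModel} to control accumulation points $L\subseteq\mathcal A$, and the same escaping total function to produce a neighborhood $W^+$ of $L$ missing every $F_m$. The only difference is organizational: where the paper splits into the cases $|L|<\omega_2$ and $|L|=\omega_2$, you prove directly that $A\mapsto\alpha_A$ is injective (so $|L|\le\omega_1$) and then run the escaping-function argument once --- a slightly cleaner packaging of the same two ideas, and your columns $F_m=\{(m,n):n\le m\}$ silently correct the typo in the paper's definition of the sequence.
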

	
	\begin{proof} By Theorem \ref{existenceModel}, it is consistent that $\mathfrak c>\omega_2$ and that there exists $\mathcal A$ and $(\mathcal A_\alpha: \alpha<\omega_1)$ as in the statement of the proposition. We show that $\mathcal \exp(\Psi(\mathcal A))$ is not pseudocompact.
	
	Let $F:\omega\rightarrow \exp(\Psi(\mathcal A))$ be given by $F_n=\{(m, n): m\leq n\}$. We claim $F$ has no accumulation point in $\exp(\Psi(\mathcal A))$. Suppose $L$ is such an accumulation point. Then, since $F$ is a sequence of pairwise disjoint finite subsets of $\triangle$, $L\subseteq \mathcal A$.
	
	If $|L|<\omega_2$, there exists a total function below the diagonal $f$ almost disjoint from every element of $L$. Then $L \in (\Psi(\mathcal A)\setminus \cl f)^+$ but $F_n \notin (\Psi(\mathcal A)\setminus \cl f)^+$ for every $n \in \omega$, a contradiction.
	
	Now suppose $|L|=\omega_2$. There exists $\alpha<\omega_1$ such that there exists two distinct $s, t \in \mathcal A$ such that $\dom s, \dom t \in \mathcal A_\alpha$. Since $s, t$ are distinct, it follows that $\dom(s)\neq \dom(t)$, and since $\mathcal A_\alpha$ is an almost disjoint family, $\dom s\cap \dom t\subseteq k$ for some $k \in \omega$. Then $L \in (\{s\}\cup\{s\setminus\{(m, n): m\leq n<k\})^-\cap \{t\}\cup\{t\setminus\{(m, n): m\leq n<k\})^-$, but no element of the sequence $F$ is a member of the latter open set.\end{proof}

Let $\mathcal{A}$ be an \textsf{AD} family. For the reader's convenience we repeat the definition of the \emph{Mathias forcing $\mathbb{M(\mathcal{A})}$}
associated with $\mathcal{A}$ is the set of all
$p=\left(  s_{p},F_{p}\right)$ such that
\begin{enumerate}
\item there is $n_{p}\in\omega$ such that $s_{p}:n_{p}\longrightarrow2$, and 
\item $F_{p}\in\lbrack\mathcal{A}]^{<\omega}.$
\end{enumerate}
ordered by 
$p=\left(  s_{p},F_{p}\right) \leq q=\left(  s_{q},F_{q}\right)$ if 
\begin{enumerate}
\item $s_{q}\subseteq s_{p}$ (hence $n_{q}\leq n_{p}$), $F_{q}\subseteq F_{p}$, and
\item if $B\in F_{q},$ then $B\cap s_{p}^{-1}\left(  1\right)  \subseteq n_{q}.$
\end{enumerate}

Given $p=\left(  s_{p},F_{p}\right)  \in\mathbb{M}\left(  \mathcal{A}\right)
,$ we call $s_{p}$ the \emph{stem of }$p$ and $F_{p}$ the \emph{side condition
of }$p.$ The \emph{length of }$p$ is $len\left(  p\right)  =n_{p}.$ If
$G\subseteq\mathbb{M}\left(  \mathcal{A}\right)  $ is a generic filter, the
\emph{generic real of }$\mathbb{M}\left(  \mathcal{A}\right)  $\emph{ }is
defined as $A_{gen}=\bigcup
\left\{  i\mid\exists\left(  s,F\right)  \in G\left(  s\left(  i\right)
=1\right)  \right\}  .$ The following lemma is well-known and easy to prove:

\begin{lemma}
Let $\mathcal{A}$ be an \textsf{AD} family, $G\subseteq\mathbb{M}\left(
\mathcal{A}\right)  $ a generic filter and $A_{gen}$ the generic real.
\label{Basic Mathias}

\begin{enumerate}
\item $A_{gen}$ is an infinite subset of $\omega.$

\item $A_{gen}$ is almost disjoint with every element of $\mathcal{A}.$

\item For every $X\in\left[  \omega\right]  ^{\omega}\cap V,$ if
$X\in\mathcal{I}\left(  \mathcal{A}\right)  ^{+},$ then $A_{gen}\cap X$ is infinite.
\end{enumerate}
\end{lemma}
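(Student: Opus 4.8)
The plan is to prove all three items by standard genericity arguments, each reducing to the density in $\mathbb{M}(\mathcal{A})$ of an appropriate family of conditions. The one computation underlying everything is a description of how a condition may be extended. Suppose $p=(s_p,F_p)$ and we wish to produce $p'=(s',F')\le p$ whose stem $s'$ is strictly longer than $s_p$. The ordering forces every new $1$ that $s'$ places at a position $\ge n_p$ to lie outside $\bigcup F_p$: indeed for $B\in F_p\subseteq F'$ the relation $p'\le p$ demands $B\cap (s')^{-1}(1)\subseteq n_p$. Conversely, any position $i\ge n_p$ with $i\notin\bigcup F_p$ can be used, by extending $s_p$ with $0$'s up to $i$ and a single $1$ at $i$ while keeping $F'=F_p$. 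Thus the heart of each argument is to locate, above a prescribed threshold and possibly inside a target set, a position outside $\bigcup F_p$.

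For item (1), I would show that for each $k\in\omega$ the set $D_k=\{p:\exists i\ge k,\ s_p(i)=1\}$ is dense. Given $p=(s_p,F_p)$, the key point is that $\omega\setminus\bigcup F_p$ is infinite: since $\mathcal A$ is an infinite \textsf{AD} family we may pick $B\in\mathcal A\setminus F_p$, and then $B\cap C$ is finite for every $C\in F_p$, so $B\setminus\bigcup F_p$ is cofinite in $B$ and hence infinite. Choosing $i\in B\setminus\bigcup F_p$ with $i\ge\max(k,n_p)$ and extending as above produces $p'\le p$ in $D_k$. Genericity then places a $1$ in the stem beyond every $k$, so $A_{gen}$ is infinite. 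Item (2) is the easiest: for $B\in\mathcal A$ the set $D_B=\{p:B\in F_p\}$ is dense, since $(s_p,F_p\cup\{B\})\le(s_p,F_p)$ always — the extra requirement $B\cap s_p^{-1}(1)\subseteq n_p$ is automatic because $s_p^{-1}(1)\subseteq n_p$. Fixing $p\in G\cap D_B$, every $q\in G$ below $p$ satisfies $B\cap s_q^{-1}(1)\subseteq n_p$, and by directedness of $G$ this confines $A_{gen}\cap B$ to $n_p$; hence $A_{gen}\cap B$ is finite.

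For item (3), fix $X\in[\omega]^\omega\cap V$ with $X\in\mathcal I(\mathcal A)^+$ and show that $E_k=\{p:\exists i\in X,\ i\ge k,\ s_p(i)=1\}$ is dense. Here positivity is exactly what is consumed: $X\in\mathcal I(\mathcal A)^+$ means $X\setminus\bigcup F$ is infinite for every finite $F\subseteq\mathcal A$, so in particular $X\setminus\bigcup F_p$ is infinite for every condition $p$. We may therefore pick $i\in X\setminus\bigcup F_p$ with $i\ge\max(k,n_p)$ and extend the stem as in the first paragraph to land in $E_k$. Genericity then yields infinitely many elements of $A_{gen}\cap X$.

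The main (and essentially only) obstacle worth isolating is the extension computation of the first paragraph, together with the two places where the hypotheses are genuinely used: the infiniteness of $\omega\setminus\bigcup F_p$ in item (1), which relies on $\mathcal A$ being \textsf{AD} and infinite, and the infiniteness of $X\setminus\bigcup F_p$ in item (3), which is precisely the positivity $X\in\mathcal I(\mathcal A)^+$. Once these two facts are in hand each density claim is routine, and all three conclusions follow immediately from the genericity of $G$.
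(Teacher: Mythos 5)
Your proof is correct: the extension computation (new $1$'s in the stem must, and may, be placed at positions outside $\bigcup F_p$ above $n_p$), the three density arguments, and the two places where the hypotheses are consumed (infiniteness of the \textsf{AD} family $\mathcal A$ for item (1), positivity $X\in\mathcal I(\mathcal A)^+$ together with $X\in V$ for item (3)) constitute exactly the standard argument. The paper gives no proof of this lemma, dismissing it as well-known and easy, and what you have written is precisely that intended proof.
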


By $Fun\left(  \Delta\right)  $ we denote the set of all
functions $f:\omega\longrightarrow\omega$ such that $f\subseteq\Delta.$ Given
$X\in\left[  \omega\right]  ^{\omega},$ define $PFun_{X}\left(  \Delta\right)
$ as the set of all functions $g$ such that there is $A\in\left[  X\right]
^{\omega}$ for which $g:A\longrightarrow\omega$ and $g\subseteq\Delta.$ By
$PFun\left(  \Delta\right)  $ we denote $PFun_{\omega}\left(  \Delta\right)
.$ Note that if $f,g\in PFun\left(  \Delta\right)  $ then $f$ and $g$ are
almost disjoint if and only if the set $\left\{  n\in \dom\left(  f\right)
\cap \dom\left(  g\right)  \mid f\left(  n\right)  =g\left(  n\right)
\right\}  $ is finite.

\begin{definition}
Define $\mathfrak{ie}$ as the smallest size of a family $\mathcal{F}\subseteq
PFun\left(  \Delta\right)  $ such that for every $g\in Fun\left(
\Delta\right)  $ there is $f\in\mathcal{F}$ such that $\left\vert f\cap
g\right\vert =\omega.$
\end{definition}

The cardinal invariant $\mathfrak{ie}$ is closely related (though not equal) to the invariant $\cov^*(\mathcal{ED}_{\mathrm{fin}})$ defined in \cite{PairSplitting}. If $X \in [\omega]^\omega$ and $n \in \omega$, we let $X(n)$ be the $n$-th element of $X$, and $X|_n=\{X(i): i<n\}$ is the set of the first $n$ elements of $X$.

\begin{definition}
Let $X\in\left[  \omega\right]  ^{\omega}$ and $\mathcal{B}\subseteq
PFun\left(  \Delta\right)  .$ Define the forcing $\mathbb{E}_{\Delta}\left(
\mathcal{B},X\right)  $ as the set of all $p=\left(  s_{p},n_{p},F_{p}\right)
$ with the following properties:

\begin{enumerate}
\item $n_{p}\in\omega,$ $F_{p}\in\left[  \mathcal{B}\right]  ^{<\omega}.$

\item $s_{p}:X|_{n_p}\longrightarrow\omega$ and $s_{p}\subseteq\Delta.$

\item $2\left\vert F_{p}\right\vert \leq n_{p}.$
\end{enumerate}

Let $p=\left(  s_{p},n_{p},F_{p}\right)  ,$ $q=\left(  s_{q},n_{q}%
,F_{q}\right)  \in\mathbb{E}_{\Delta}\left(  \mathcal{B}\right)  ,$ we define
$p\leq q$ if the following conditions hold:

\begin{enumerate}
\item $n_{q}\leq n_{p},$ $F_{q}\subseteq F_{p}$ and $s_{q}\subseteq s_{p}.$

\item If $f\in F_{q}$ and $i\in \dom f \cap X_{n_q}\setminus X_{n_p},$ then $s_{p}\left(
i\right)  \neq f\left(  i\right)  $.
\end{enumerate}
\end{definition}

Given $p=\left(  s_{p},n_{p},F_{p}\right)  \in\mathbb{E}_{\Delta}\left(
\mathcal{B},X\right)  ,$ we call $s_{p}$ the \emph{stem of }$p$ and $F_{p}$
the \emph{side condition of }$p.$ Define the \emph{length of }$p$ as
$len\left(  p\right)  =n_{p}.$ By $\mathbb{E}_{\Delta}$ we will denote
$\mathbb{E}_{\Delta}\left(  Fun\left(  \Delta\right)  ,\omega\right)  .$ If
$G\subseteq\mathbb{E}_{\Delta}\left(  \mathcal{B},X\right)  $ is a generic
filter, the \emph{generic real of }$\mathbb{E}_{\Delta}\left(  \mathcal{B}%
,X\right)  $\emph{ }is defined as $f_{gen}=\bigcup
\left\{  s\mid\exists\left(  s,n,F\right)  \in G\right\}  .$ The analogue of
lemma \ref{Basic Mathias} is the following:

\begin{lemma}
Let $X\in\left[  \omega\right]  ^{\omega}$, $\mathcal{B}\subseteq Fun\left(
\Delta\right)  $ and $f_{gen}$ the generic real. \label{Basic E}

\begin{enumerate}
\item $f_{gen}:X\longrightarrow\omega$ and $f_{gen}\subseteq\Delta.$

\item $f_{gen}$ is almost disjoint with every element of $\mathcal{B}.$

\item If $g\in PFun_{X}\left(  \Delta\right)  \cap V$ is such that
$g\in\mathcal{I}\left(  \mathcal{B}\right)  ^{+}$ (where $\mathcal{I}\left(
\mathcal{B}\right)  $ is the ideal generated by $\mathcal{B}$), then
$f_{gen}\cap g$ is infinite.
\end{enumerate}
\end{lemma}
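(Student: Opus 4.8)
The plan is to mirror the three parts of Lemma~\ref{Basic Mathias}, replacing the combinatorics of infinite subsets of $\omega$ by that of graphs of functions below the diagonal; each part is a density argument, and the single new ingredient is a counting estimate that uses both the bound $2\lvert F_p\rvert\le n_p$ built into the conditions and the trivial inequality $X(n)\ge n$ (valid because $X(n)$, the $n$-th element of $X$, has $n$ distinct naturals below it).

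First, for (1), I would note that any two conditions in the generic filter have a common lower bound, so their stems agree where both are defined, and hence $f_{gen}=\bigcup\{s_p:(s_p,n_p,F_p)\in G\}$ is a single function contained in $\Delta$. To see $\dom(f_{gen})=X$ I would show $D_k=\{p:n_p\ge k\}$ is dense: given $q=(s_q,n_q,F_q)$, one extends the stem coordinate by coordinate, and at a fresh coordinate $i=X(\ell)$ with $\ell\ge n_q$ there are $i+1\ge\ell+1$ admissible values in $\Delta$ while at most $\lvert F_q\rvert$ are forbidden by clause~(2) of the order; since $\ell\ge n_q\ge 2\lvert F_q\rvert$ we have $\ell+1>\lvert F_q\rvert$, so an admissible value always remains. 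Keeping $F_p=F_q$ preserves $2\lvert F_p\rvert\le n_p$, and genericity then drives every element of $X$ into $\dom(f_{gen})$.

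Next, for (2), fix $f\in\mathcal B$. The set $D_f=\{p:f\in F_p\}$ is dense, since one may first enlarge $n_p$ so that $2(\lvert F_q\rvert+1)\le n_p$ and then adjoin $f$ to the side condition. Once $f\in F_p$ for some $p\in G$, clause~(2) of the order guarantees $s_{p'}(i)\ne f(i)$ for every $p'\le p$ and every new coordinate $i\in\dom(f)$, so the agreement set $\{i\in\dom(f)\cap\dom(f_{gen}):f_{gen}(i)=f(i)\}$ is contained in the finite set $X|_{n_p}$, whence $f_{gen}$ and $f$ are almost disjoint. For (3), the heart of the argument, I would fix $g\in PFun_X(\Delta)\cap V$ with $g\in\mathcal I(\mathcal B)^+$ and prove that $E_k=\{p:\exists\,\ell\ge k\ (X(\ell)\in\dom(g)\cap\dom(s_p)\text{ and } s_p(X(\ell))=g(X(\ell)))\}$ is dense. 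Given $q=(s_q,n_q,F_q)$ with $n_q\ge k$, applying $g\in\mathcal I(\mathcal B)^+$ to the finite set $F_q\subseteq\mathcal B$ yields infinitely many $i\in\dom(g)$ with $g(i)\ne f(i)$ for every $f\in F_q$ with $i\in\dom(f)$; choose such an $i=X(\ell)$ with $\ell\ge n_q$. I then extend $s_q$ up to the coordinate $X(\ell)$, selecting admissible values at the intermediate coordinates exactly as in (1) and setting the value at $i$ equal to $g(i)$, which is legal since $g\subseteq\Delta$ forces $g(i)\le i$ and the choice of $i$ makes $g(i)$ avoid the values forbidden by $F_q$. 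Thus $E_k$ is dense, and genericity produces infinitely many coordinates where $f_{gen}$ agrees with $g$, i.e. $f_{gen}\cap g$ is infinite.

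The main obstacle is the bookkeeping in part (3): one must extend the stem so as to hit $g$ at a fresh coordinate while simultaneously respecting $\Delta$ (which caps the admissible values at coordinate $i$ at $i+1$), dodging the finitely many values forbidden by the current side condition, and maintaining $2\lvert F_p\rvert\le n_p$. All three constraints are reconciled by the same inequality $X(\ell)\ge\ell\ge n_q\ge 2\lvert F_q\rvert$ already used in (1), together with the defining property of $\mathcal I(\mathcal B)^+$, which is precisely what furnishes a coordinate at which $g$ is free to be chosen as the value of $f_{gen}$.
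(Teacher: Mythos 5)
Your proposal is correct. The paper states Lemma \ref{Basic E} without proof, as the routine analogue of Lemma \ref{Basic Mathias}, and your argument --- identifying the three dense sets and reconciling the constraint $s_p\subseteq\Delta$ with the forbidden values from the side condition via the counting $X(\ell)\ge\ell\ge n_q\ge 2\lvert F_q\rvert$, and using positivity of $g$ with respect to $\mathcal I(\mathcal B)$ to find a coordinate where the stem may copy $g$ --- is exactly the standard proof the authors intend.
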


Let $\mathbb{P}$ be a partial order. Recall that a set $L\subseteq\mathbb{P}$
is \emph{linked }if every $p,q\in L$ are compatible. $\mathbb{P}$ is $\sigma
$-linked if $\mathbb{P}$ is the union of countably many linked sets.

\begin{lemma}
Let $X\in\left[  \omega\right]  ^{\omega}$ and $\mathcal{B}\subseteq
Fun\left(  \Delta\right)  .$\label{lemma del 4}

\begin{enumerate}
\item Let $p=\left(  s_{p},n_{p},F_{p}\right)  ,$ $q=\left(  s_{q},n_{q}%
,F_{q}\right)  \in\mathbb{E}_{\Delta}\left(  \mathcal{B},X\right)  .$ If
$s_{p}=s_{q}$ (hence $n_{p}=n_{q}$), then $p$ and $q$ are compatible.

\item $\mathbb{E}_{\Delta}\left(  \mathcal{B},X\right)  $ is $\sigma$-linked.

\item Let $p=\left(  s_{p},n_{p},F_{p}\right)  ,$ $q=\left(  s_{q},n_{q}%
,F_{q}\right)  \in\mathbb{E}_{\Delta}\left(  \mathcal{B},X\right)  .$ If
$s_{p}=s_{q}$ and \linebreak $4\left\vert F_{p}\right\vert ,4\left\vert F_{q}\right\vert
\leq n_{p}$ then $r=\left(  s_{p},n_{p},F_{p}\cup F_{q}\right)  $ extends both $p$
and $q.$
\end{enumerate}
\end{lemma}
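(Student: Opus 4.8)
The plan is to verify the three items in order, noting that each rests on the same underlying observation: that the order relation on $\mathbb{E}_\Delta(\mathcal{B},X)$ constrains the stem only through the side conditions that live \emph{below} the current length. For item (1), suppose $s_p = s_q$, so $n_p = n_q$; I would simply propose the condition $r = (s_p, n_p, F_p \cup F_q)$ as a common extension. The only thing to check is that $r$ is a legitimate condition and that $r \leq p$ and $r \leq q$. The nontrivial clause in the definition of the order is clause (2), which demands that for $f \in F_p$ and $i \in \dom f \cap (X_{n_p} \setminus X_{n_r})$ one has $s_r(i) \neq f(i)$; but since $n_r = n_p = n_q$, the index set $X_{n_p} \setminus X_{n_r}$ is empty, so clause (2) is satisfied \emph{vacuously}, and $r$ extends both. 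The only genuine obstacle for (1) is legitimacy: $r$ must satisfy $2|F_p \cup F_q| \leq n_r$, and this need \emph{not} hold in general, which is precisely why item (3) has to add the stronger hypothesis on the cardinalities; so for (1) as stated I would observe that compatibility does not require the witness to be of the form $(s_p, n_p, F_p \cup F_q)$ — instead I would extend the stem past $\max(4|F_p|, 4|F_q|)$ first (lengthening $s_p$ arbitrarily within $\Delta$, which is always possible since $\Delta$ has infinitely many admissible values above the diagonal at each coordinate) and only then union the side conditions, reducing (1) to the situation of (3).

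For item (3), under the hypothesis $4|F_p|, 4|F_q| \leq n_p$ I would directly verify that $r = (s_p, n_p, F_p \cup F_q)$ is a condition: we have $2|F_p \cup F_q| \leq 2(|F_p| + |F_q|) \leq 2 \cdot \tfrac{n_p}{4} + 2 \cdot \tfrac{n_p}{4} = n_p = n_r$, so clause (3) of the definition of a condition holds. That $r \leq p$ and $r \leq q$ again follows vacuously from $n_r = n_p = n_q$ as in item (1). This is the clean case and I expect it to be essentially immediate once the arithmetic with the factor of $4$ is written out; the factor $4$ (rather than $2$) is exactly what buys the room for the union.

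For item (2), the $\sigma$-linkedness, I would partition $\mathbb{E}_\Delta(\mathcal{B},X)$ according to the stem: for each finite partial function $s : X|_n \to \omega$ with $s \subseteq \Delta$, let $L_s = \{p \in \mathbb{E}_\Delta(\mathcal{B},X) : s_p = s\}$. By item (1), any two conditions in $L_s$ are compatible, so each $L_s$ is linked, and these sets cover the whole forcing. The remaining point is that there are only countably many possible stems $s$: a stem is a function from an initial segment $X|_n$ (of which there are countably many, indexed by $n \in \omega$) into $\omega$ subject to $s \subseteq \Delta$, i.e. $s(X(i)) \leq X(i)$, so for each fixed $n$ there are only finitely many admissible stems. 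Hence $\{L_s : s \text{ a stem}\}$ is a countable family of linked sets covering $\mathbb{E}_\Delta(\mathcal{B},X)$, giving $\sigma$-linkedness.

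I do not anticipate a serious obstacle here; the one subtlety worth flagging is that the ordering never constrains the stem above the current length, so compatibility of conditions sharing a stem is automatic — the side conditions only ever restrict \emph{future} extensions of the stem, and two conditions with a common stem can always be amalgamated once the length is pushed high enough to accommodate the combined side condition, which is the content of items (1) and (3).
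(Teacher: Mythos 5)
Your items (2) and (3) are correct and essentially identical to the paper's treatment: the arithmetic $2\left\vert F_{p}\cup F_{q}\right\vert \leq 2(\left\vert F_{p}\right\vert +\left\vert F_{q}\right\vert )\leq n_{p}$, the vacuousness of clause (2) of the ordering when the stems (hence lengths) coincide, and the observation that $s\subseteq\Delta$ leaves only finitely many stems of each length. The genuine problem is in item (1), at precisely the step that carries the content of the lemma. You propose to lengthen $s_{p}$ \emph{arbitrarily} within $\Delta$ and only then union the side conditions, justifying feasibility by the claim that ``$\Delta$ has infinitely many admissible values above the diagonal at each coordinate.'' Both parts of this are wrong. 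First, the lengthening cannot be arbitrary: for the amalgam $\left(  t,n_{t},F_{p}\cup F_{q}\right)  $ to extend $p$ and $q$, clause (2) of the ordering requires $t\left(  i\right)  \neq f\left(  i\right)  $ for every $f\in F_{p}\cup F_{q}$ and every new coordinate $i$; an extension of the stem that happens to agree with some $f\in F_{p}$ at a new coordinate yields a condition extending \emph{neither} $p$ nor $q$, so the reduction to (3) collapses. Second, since $t\subseteq\Delta$ forces $t\left(  i\right)  \leq i$, a coordinate $i$ admits exactly $i+1$ values, not infinitely many --- your own counting in item (2), where you use $s\left(  X\left(  i\right)  \right)  \leq X\left(  i\right)  $ to conclude there are finitely many stems of each length, contradicts this claim.

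What makes the dodge possible is exactly the size requirement built into the definition of the forcing, and this is the argument your proof is missing (it is also why that requirement is there at all). Every new coordinate is $X\left(  j\right)  $ for some $j\geq n_{p}$, hence is $\geq n_{p}$ and admits at least $n_{p}+1$ values; on the other hand the number of forbidden values at that coordinate is at most $\left\vert F_{p}\right\vert +\left\vert F_{q}\right\vert \leq\frac{n_{p}}{2}+\frac{n_{p}}{2}=n_{p}$, using only that $p$ and $q$ are conditions (i.e. $2\left\vert F_{p}\right\vert ,2\left\vert F_{q}\right\vert \leq n_{p}$ --- no hypothesis of item (3) is needed). So at every new coordinate some admissible value avoids all of $F_{p}\cup F_{q}$, and one can choose $t\supseteq s_{p}$ with $t\subseteq\Delta$, $\left\vert t\right\vert \geq2\left\vert F_{p}\cup F_{q}\right\vert $, and $t\left(  i\right)  \neq f\left(  i\right)  $ for all $f\in F_{p}\cup F_{q}$ and $i\in \dom\left(  t\right)  \setminus \dom\left(  s_{p}\right)  $; then $\left(  t,\left\vert t\right\vert ,F_{p}\cup F_{q}\right)  $ extends both $p$ and $q$. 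This is the paper's construction; with this counting inserted in place of the ``arbitrarily/infinitely many'' claim, your lengthen-then-union scheme for (1) goes through.
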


\begin{proof}
Let $p=\left(  s_{p},n_{p},F_{p}\right)  ,$ $q=\left(  s_{q},n_{q}%
,F_{q}\right)  \in\mathbb{E}_{\Delta}\left(  \mathcal{B},X\right)  $ with
$s=s_{p}=s_{q}.$ We first find $t\subseteq\Delta$ with the following properties:

\begin{enumerate}
\item $s\subseteq t.$

\item For every $f\in F_{p}\cup F_{q}$ and $i\in \dom\left(  t\right)
\setminus \dom\left(  s\right)  ,$ we have that $t\left(  i\right)  \neq
f\left(  i\right)  .$

\item $\left\vert t\right\vert \geq 2\left\vert F_{p}\cup F_{q}\right\vert .$
\end{enumerate}

We can find such $t$ since $2\left\vert F_{p}\right\vert ,2\left\vert
F_{q}\right\vert \leq n_{p}.$ It follows that $r=\left(  t,F_{p}\cup F_{q}\right)
$ is an extension of both $p$ and $q.$

We can now prove that $\mathbb{E}_{\Delta}\left(  \mathcal{B},X\right)  $ is
$\sigma$-linked. For every $n\in\omega$ and $s:X|_n\longrightarrow\omega$
with $s\subseteq\Delta,$ define $L\left(  s,n\right)  =\left\{  p=\left(
s_{p},n_{p},F_{p}\right)  \mid s_{p}=s\right\}  .$ Clearly each $L\left(
s,n\right)  $ is linked by the previous point and $\mathbb{E}_{\Delta}\left(
\mathcal{B},X\right)  =\bigcup\{
L\left(  s,n\right): n \in \omega, s \subseteq \delta, s \in \omega^{X|_n}\}  .$

Point 3 follows by the definitions.
\end{proof}

The following result was inspired by Lemma 5.1 of A. Miller's 
\cite{SomePropertiesofMeasureandCategory}:

\begin{proposition}
Let $n\in\omega$, $s:n\longrightarrow\omega$ \ with $s\subseteq\Delta.$ Let
$D\subseteq\mathbb{E}_{\Delta}$ be an open dense. There is an antichain $Z\in\left[
D\right]  ^{<\omega}$ such that for every $p=\left(
s,n,F_{p}\right)  \in\mathbb{E}_{\Delta},$ there is $q\in Z$ such that $p$ and
$q$ are compatible.\label{Compacidad E}
\end{proposition}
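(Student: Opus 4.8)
The plan is to reduce the whole statement to a compactness argument on a space of \emph{obstacles}, using that a condition of the special shape $p=(s,n,F)$ has stem fixed on $[0,n)$ so that its compatibility with another condition is controlled purely by finitely many coordinates. First I would analyze the two clauses defining $\leq$ on $\mathbb{E}_{\Delta}$ in order to characterize, for $q=(s_q,n_q,F_q)$ with $s\subseteq s_q$, when $q$ is compatible with $p=(s,n,F)$. Writing $\sigma_q=s_q\restriction[n,n_q)$ for the \emph{trace} of $q$ above $s$, a direct inspection shows that $p$ and $q$ are compatible if and only if $\sigma_q(i)\notin\{f(i):f\in F\}$ for every $i\in[n,n_q)$. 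The decisive observation is that the side condition $F_q$ is irrelevant: if the trace avoids $F$, then a common extension is produced by prolonging $s_q$ past $n_q$ while avoiding $F\cup F_q$, which is always possible because at each coordinate only finitely many values are forbidden while the number of available values grows with the coordinate (and one can lengthen the stem until $2|F\cup F_q|\leq n_r$). I expect this first bookkeeping step — getting the characterization exactly right and isolating the irrelevance of $F_q$ — to be the main obstacle; the remainder is essentially packaging.

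Next I would encode each admissible $F$ as an obstacle. Since $p=(s,n,F)$ requires $2|F|\leq n$, the sets $O(i)=\{f(i):f\in F\}\cap\{0,\dots,i\}$ have size at most $d:=\lfloor n/2\rfloor$, so $O$ lives in the \emph{compact} product space $\Omega=\prod_{i\geq n}[\{0,\dots,i\}]^{\leq d}$ of finite discrete factors. Conversely, every $O\in\Omega$ is realized by a family $F_O$ of at most $d$ functions in $Fun(\Delta)$, so $p_O=(s,n,F_O)$ is a legitimate condition (indeed $2|F_O|\leq n$), and by the first paragraph $p$ is compatible with $q$ precisely when $\sigma_q$ avoids the obstacle of $F$, i.e. $\sigma_q(i)\notin O(i)$ for all $i\in[n,n_q)$. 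For $q$ with $s\subseteq s_q$ set $U_q=\{O\in\Omega:\forall i\in[n,n_q)\ \sigma_q(i)\notin O(i)\}$; this depends only on the finitely many coordinates in $[n,n_q)$, hence is clopen in $\Omega$.

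Now I would run density and compactness together. Fixing $O\in\Omega$ and applying the density of $D$ below $p_O$ yields $q_O\in D$ with $q_O\leq p_O$; the order clause forces $s_{q_O}(i)\neq f(i)$ for every $f\in F_O$ and $i\in[n,n_{q_O})$, so $\sigma_{q_O}$ avoids $O$ and $O\in U_{q_O}$. Thus $\{U_{q_O}:O\in\Omega\}$ is an open cover of the compact space $\Omega$, and a finite subcover produces finitely many conditions $W=\{q_1,\dots,q_k\}\subseteq D$ such that every $O\in\Omega$ lies in some $U_{q_j}$.

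Finally I would thin $W$ to an antichain without losing coverage. If $\sigma_{q_i}\subsetneq\sigma_{q_j}$ then $U_{q_j}\subseteq U_{q_i}$, since avoiding a longer trace is more restrictive; so one may discard every condition whose trace properly extends another's, keeping only those of $\subseteq$-minimal trace, and among conditions sharing a trace — equivalently, the same stem $s\frown\sigma$, hence mutually compatible by Lemma \ref{lemma del 4} — keep just one, as they cover the same obstacles. The surviving traces are pairwise incomparable, so the surviving stems are pairwise incomparable and the corresponding conditions pairwise incompatible; this antichain $Z\subseteq D$ is the desired one. Indeed, for any $p=(s,n,F_p)\in\mathbb{E}_{\Delta}$, letting $O$ be its obstacle there is $q\in Z$ with $O\in U_q$, i.e. $\sigma_q$ avoids $F_p$ on $[n,n_q)$, so $p$ and $q$ are compatible, completing the proof.
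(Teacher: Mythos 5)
Your proof is correct, and it reaches the proposition by a genuinely different route than the paper's, although both ultimately exploit the same compactness phenomenon: a condition $(s,n,F)$ interacts with stems extending $s$ only through the trace values $\{f(i):f\in F\}$ at each coordinate, and these live in a compact product of finite sets. The paper argues by contradiction: it fixes a \emph{countable maximal antichain} $A=\{r_m:m\in\omega\}\subseteq D$ (countability coming from $\sigma$-linkedness, Lemma \ref{lemma del 4}, hence ccc), assumes that for each $m$ some $p_m=(s,n,F_m)$ is incompatible with $r_0,\dots,r_m$, takes an accumulation point $F$ of the side conditions $F_m$ in the compact space $Fun(\Delta)^{k}$, finds a common extension $q=(t,l,G)$ of $(s,n,F)$ and some $r_j$ by maximality of $A$, and then transfers the stem $t$ to $\overline{p}_m=(t,l,F_m)\leq p_m$ for a suitable $m>l,j$ with $F_m$ agreeing with $F$ below $l$, contradicting $p_m\perp r_j$; the witnessing $Z$ is an initial segment of $A$, so it is an antichain for free. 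You instead give a direct construction: you first isolate the compatibility criterion (compatibility with $(s,n,F)$ depends only on whether the other condition's trace avoids the obstacle of $F$, its own side condition being irrelevant), then cover the compact obstacle space $\Omega$ by the clopen sets $U_{q_O}$ produced by density, extract a finite subcover, and finally thin the resulting finite family to an antichain, using that equal traces give equal $U$'s and compatible conditions, while $\subseteq$-incomparable traces give incompatible conditions. Your route avoids the ccc/maximal-antichain machinery entirely and makes explicit the quotient (side conditions modulo obstacles) that the paper's accumulation-point argument uses only implicitly; the price is the extra bookkeeping of the compatibility characterization and the thinning step, both of which you carry out correctly, including the two size estimates that make them work ($|F\cup F_q|\leq n_q$ so that stems can always be prolonged avoiding both side conditions, and $2|F_O|\leq n$ so that an arbitrary obstacle is realized by a legitimate condition $p_O$).
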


\begin{proof}
Let $A=\left\{  r_{m}\mid m\in\omega\right\}  \subseteq D$ be a maximal
antichain (note that $A$ is countable since $\mathbb{E}_{\Delta}$ is $\sigma$-linked and therefore  c.c.c.),
let $k=\frac{n}{2}$ in case $n$ is even and $k=\frac{n-1}{2}$ in
case $n$ is odd.

Assume the proposition is false, so for every $m\in\omega,$ there is
$p_{m}=\left(  s,n,F_{m}\right)  \in\mathbb{E}_{\Delta}$ such that $p_{m}\perp
r_{i}$ for each $i\leq m.$ We can assume that each $F_{m}$ has size $k$, let
$F_{m}=\left\{  f_{i}^{m}\right\}  _{i<k}.$ We may view $B=\left\{  F_{m}\mid
m\in\omega\right\}  $ as a subset of $Fun\left(  \Delta\right)  ^{k}.$ Since
$Fun\left(  \Delta\right)  ^{k}$ is a compact space, we can find $F=\left\{
g_{i}\right\}  _{i<k}$ an accumulation point of $B.$

Let $p=\left(  s,n,F\right)  ,$ since $A$ is a maximal antichain, there is
$j\in\omega$ such that $p$ and $r_{j}$ are compatible. Let $q=\left(
t,l,G\right)  $ be a common extension of both of them. Since $F$ is an
accumulation point of $B,$ there is $m>l,j$ such that $f_{i}^{m}%
\upharpoonright l=g_{i}\upharpoonright l$ for every $i<k.$ Let $\overline
{p}_{m}=\left(  t,l,F_{m}\right)  $ and note that $\overline{p}_{m}\leq
p_{m}.$ It follows that $\overline{p}_{m}$ and $q$ are compatible, in
particular, $p_{m}$ and $q$ are compatible, which implies that $p_{m}$ and
$r_{j}$ are compatible, which is a contradiction.
\end{proof}

For the rest of the section, we fix sets $\left\{  D_{\gamma}\mid\gamma
\in\omega_{1}\right\}  ,$ $H,$ $E$ and a function $R$ with the following properties:

\begin{enumerate}
\item $\left\{  H,E\right\}  \cup\left\{  D_{\gamma}\mid\gamma\in\omega
_{1}\right\}  $ is a partition of $\omega_{2}.$

\item For every $\gamma\in\omega_{1},$ we have that $\left\vert D_{\gamma
}\right\vert =\left\vert H\right\vert =\left\vert E\right\vert =\omega_{2}.$

\item $R:%
{\textstyle\bigcup\limits_{\gamma\in\omega_{1}}}
D_{\gamma}\longrightarrow H$ is a bijective function such that $\alpha<R\left(  \alpha\right)  $ for
every $\alpha\in%
{\textstyle\bigcup\limits_{\gamma\in\omega_{1}}}
D_{\gamma}.$
\end{enumerate}

We now define a finite support iteration $\langle\mathbb{P}_{\alpha
},\mathbb{\dot{Q}}_{\alpha}\mid\alpha\leq\omega_{2}\rangle$ as follows:

\begin{enumerate}
\item If $\alpha\in E,$ then $\mathbb{P}_{\alpha}\Vdash``\mathbb{\dot{Q}%
}_{\alpha}=\mathbb{E}_{\Delta}\mrq.$

\item For every $\gamma\in\omega_{1}$ and $\xi\in D_{\gamma},$ let $\dot
{A}_{\gamma}^{\xi}$ be a name for the $(\mathbb{M(}\mathcal{A}_{\gamma}^{\xi
}),V_{\xi})$-generic real (where $\mathcal{A}_{\gamma}^{\xi}=\{\dot{A}%
_{\gamma}^{\eta}\mid\eta\in\xi\cap D_{\gamma}\}$ and $V_{\xi}$ is the
extension by $\mathbb{P}_{\xi}$).

\item If $\alpha\in D_{\gamma}$ (with $\gamma\in\omega_{1}$), then
$\mathbb{P}_{\alpha}\Vdash``\mathbb{\dot{Q}}_{\alpha}=\mathbb{M(}%
\mathcal{A}_{\gamma}^{\alpha})\mrq.$

\item Given $\xi\in H,$ let $\gamma\in\omega_{1}$ and $\beta\in D_{\gamma}$
such that $\xi=R\left(  \beta\right)  .$ let $\dot{f}_{\xi}$ be a name for the
$(\mathbb{E}_{\Delta}\left(  \mathcal{B}_{\xi},A_{\gamma}^{\beta}\right)
,V_{\xi})$-generic real (where $\mathcal{B}_{\xi}=\{\dot{f}_{\eta}\mid\eta
\in\xi\cap H\}$ ).

\item If $\alpha\in H,$ with ($R\left(  \beta\right)  =\alpha$ and $\beta\in
D_{\gamma}$) then $\mathbb{P}_{\alpha}\Vdash``\mathbb{\dot{Q}}_{\alpha
}=\mathbb{E}_{\Delta}\left(  \mathcal{B}_{\alpha},A_{\gamma}^{\beta}\right)
\mrq.$
\end{enumerate}

If $p\in\mathbb{P}_{\alpha}$ and $\dot{x}$ is a $\mathbb{P}_{\alpha}$-name for
a condition of $\mathbb{\dot{Q}}_{\alpha},$ we denote by $p^{\frown}\dot{x}$
the condition $r\in\mathbb{P}_{\alpha+1}$ such that $r\upharpoonright\alpha=p$
and $r\left(  \alpha\right)  =\dot{x}.$

We will need to develop some combinatorial tools for our forcing in order to
prove the main result. Let $\alpha\leq\omega_{2},$ we say that a condition
$p\in\mathbb{P}_{\alpha}$ is \emph{pure }if there is $n\in\omega$ such that
for every $\xi\in \dom\left(  p\right)  ,$ the following holds:

\begin{enumerate}
\item If $\xi\in D_{\gamma}$ (for some $\gamma\in\omega_{1}$), then there is
$s_{\xi}\in2^{n}$ and $J_{\xi}\in\left[  D_{\gamma}\cap\xi\right]  ^{<\omega}$
such that $p\left(  \xi\right)  =(s_{\xi},\{\dot
{A}_{\gamma}^{\eta}\mid\eta\in J_{\xi}\})$.

\item Furthermore, $J_{\xi}\subseteq \dom\left(  p\right)  .$

\item If $\xi\in H$ and $\beta$ is such that $R\left(  \beta\right)  =\xi,$
then $\beta\in \dom\left(  p\right)  .$

\item If $\xi\in H,$ (let $\beta$ such that $R\left(  \beta\right)  =\xi$),
then there is $z_{\xi}:s_{\beta}^{-1}\left(  1\right)  \longrightarrow\omega$
with $z_{\xi}\subseteq\Delta$ and $J_{\xi}\in\left[  H\cap\xi\right]
^{<\omega}$ such that $p\left(  \xi\right)
=(z_{\xi},n,\{\dot{f}_{\eta}\mid\eta\in J_{\xi}\})$ and
$4\left\vert J_{\xi}\right\vert \leq n$ (where $s_{\beta}$ is defined as in point 1).

\item Furthermore, $J_{\xi}\subseteq \dom\left(  p\right)  .$

\item If $\xi\in E,$ then there is $m_{\xi}\in\omega\ $, $z_{\xi}:m_{\xi
}\longrightarrow\omega$ with $z_{\xi}\subseteq\Delta$ and $\dot{J}$ such that
$p\left(  \xi\right)  =(z_{\xi},m_{\xi},\dot
{J})$ and there is $k_\xi$ such that $4k_\xi\leq m_\xi$ and $\mathbb P_\xi$-names $\rho_0, \dots, \rho_k$ such that $\dot J=\{(\rho_0, \mathbbm 1_{\mathbb P_\xi}), \dots, (\rho_{k_\xi-1}, \mathbbm 1_{\mathbb P_\xi})\}$
\end{enumerate}

Of course, by $(., ., .)$ we are denoting a name for the triple. Given a pure condition $p$ and $\beta \in \dom p$, we denote by $len(p)$ the size of the first coordinate of $p$.

In the above definition, recall that $\dot{A}_{\gamma}^{\xi}$ is the name for
the $(\mathbb{M(}\mathcal{A}_{\gamma}^{\xi}),V_{\xi})$-generic real and
$\dot{f}_{\xi}$ is the name for the $(\mathbb{E}_{\Delta}\left(
\mathcal{B}_{\xi}\right)  ,V_{\xi})$-generic real. An important difference
between point 4 and point 6 is that we may have $m_{\xi}\neq n.$ We call $n$
\emph{the height of }$p.$ One of the purposes of pure conditions is to avoid
(as much as possible) the use of names and use real objects. We now have the following:

\begin{lemma}
Pure conditions are dense in $\mathbb{P}_{\alpha}.$
\end{lemma}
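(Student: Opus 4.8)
The plan is to show that below any condition $p \in \mathbb{P}_\alpha$ there is a pure condition $q \leq p$. I would proceed by induction on $\alpha$, with the limit stages handled by the finite support structure. First I would observe that since we use finite support iteration, an arbitrary condition $p$ has finite support $\dom(p) \subseteq \alpha$, so it suffices to modify $p$ at each coordinate $\xi \in \dom(p)$ to bring it into the required form, enlarging the support as needed to absorb the side conditions. The key is to work from the top of the support downward, or equivalently to handle the coordinates in a way that respects the dependencies dictated by the definition of purity.

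The heart of the argument is coordinate-by-coordinate. At a coordinate $\xi \in D_\gamma$, the condition $p(\xi)$ is a $\mathbb{P}_\xi$-name for an element $(\dot{s}, \dot{F})$ of $\mathbb{M}(\mathcal{A}_\gamma^\xi)$; by strengthening $p\restriction\xi$ I can decide the stem $\dot{s}$ to be an actual $s_\xi \in 2^n$ and decide the finite side condition $\dot{F}$ to be a concrete finite set $\{\dot{A}_\gamma^\eta : \eta \in J_\xi\}$ with $J_\xi \in [D_\gamma \cap \xi]^{<\omega}$, since $\mathbb{M}(\mathcal{A}_\gamma^\xi)$-conditions have names for side conditions drawn from the generators $\dot{A}_\gamma^\eta$. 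I then add the indices in $J_\xi$ to the support, maintaining the requirement $J_\xi \subseteq \dom(q)$. At coordinates $\xi \in H$ I argue analogously using the structure of $\mathbb{E}_\Delta(\mathcal{B}_\xi, A_\gamma^\beta)$, using Lemma \ref{lemma del 4}(3) to guarantee that after padding I can arrange $4|J_\xi| \leq n$, and I must also ensure that if $R(\beta) = \xi$ then $\beta$ enters the support (purity point 3). At coordinates $\xi \in E$ the forcing is $\mathbb{E}_\Delta$ itself; here I decide the stem $z_\xi$ to be concrete and, since $\mathbb{E}_\Delta$ conditions carry side conditions which are \emph{names} for functions in $Fun(\Delta)$ (these need not lie in the ground model), I only require names $\rho_0,\dots,\rho_{k_\xi-1}$ for the finitely many functions together with the counting condition $4k_\xi \leq m_\xi$, matching purity point 6.

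I expect the main obstacle to be the bookkeeping that makes the purity conditions \emph{simultaneously} satisfiable across all coordinates of the support. The difficulty is that purity demands a single common height $n$ for the stems of the $D_\gamma$- and $H$-coordinates, while the $E$-coordinates are allowed a different length $m_\xi$; moreover deciding the side condition at coordinate $\xi$ may force new indices $\eta < \xi$ into the support, which in turn must themselves be purified, so the support can grow during the process. To handle this I would first fix a uniform $n$ large enough to accommodate all the finitely many padding requirements $4|J_\xi| \leq n$ across the support, then perform the decisions in decreasing order of coordinate (so that when I purify coordinate $\xi$, all coordinates above it are already fixed and the generic objects $\dot{A}_\gamma^\eta$, $\dot{f}_\eta$ they reference are determined by conditions already in place). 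Since the support is finite and each individual decision only adds finitely many lower indices, this closing-off process terminates after finitely many steps, yielding a pure $q \leq p$.
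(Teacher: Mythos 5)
Your overall strategy --- induction on $\alpha$, deciding at each coordinate of the finite support the stem and side condition to be real objects, enlarging the support to absorb the referenced indices $J_\xi$ (and $R^{-1}(\xi)$ for $\xi\in H$), and padding stems to a common height --- is exactly the shape of the paper's proof, and you correctly isolate the two obstacles (uniform height, growth of the support). However, the step you propose for the first obstacle is circular as stated: you want to \emph{first} fix a uniform $n$ large enough for all the padding requirements $4|J_\xi|\leq n$ across the support, and \emph{then} perform the decisions. The sets $J_\xi$ and the lengths of the decided stems do not exist until after the decisions are made; deciding a $\mathbb{P}_\xi$-name for a stem can return a stem longer than any $n$ chosen beforehand, and the sizes $|J_\xi|$ are likewise unbounded in advance, so $n$ cannot be chosen first. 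Postponing the choice of $n$ to the end does not immediately repair this either: padding the stem of an $H$-coordinate $\xi$ at a new point $i$ requires avoiding the values $\dot f_\eta(i)$ for $\eta\in J_\xi$, and these values are only determined once the stems at the \emph{lower} coordinates $\eta$ have been extended past $i$. The dependencies for padding therefore run upward (increasing order of coordinate), opposite to your decreasing-order pass, so a single sweep in either direction, with $n$ fixed beforehand, does not go through.

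The paper resolves this by letting the height come out of the recursion instead of fixing it in advance. In the successor step one first strengthens $p\upharpoonright\alpha$ to decide the data of $p(\alpha)$ (a stem of some length $m$, a concrete index set $J_\alpha$), puts $J_\alpha$ into the domain below $\alpha$, and only then applies the inductive hypothesis to obtain a pure $q\leq p_1$ of some height $n$, which one may take $>m$. Finally the single top stem is padded from $m$ up to $n$: with zeros if $\alpha\in D_\gamma$, and, if $\alpha\in H$, avoiding the finitely many stem values $z_\xi(i)$, $\xi\in J_\alpha$, $i\in[m,n)$ --- possible below the diagonal precisely because $4|J_\alpha|<m\leq i$ --- while coordinates in $E$ need no padding at all, since purity permits them their own length $m_\xi$. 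Note that this arrangement also makes your closing-off/termination argument unnecessary: at each induction step only the one coordinate $\alpha$ is processed, and every index it forces into the support lies strictly below $\alpha$, so it is absorbed by the inductive hypothesis for $\mathbb{P}_\alpha$. (A small side point: Lemma \ref{lemma del 4}(3) is an amalgamation statement for two conditions sharing a stem; the requirement $4|J_\xi|\leq n$ is arranged simply by extending stems, not by that lemma.)
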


\begin{proof}
We prove the lemma by induction on $\alpha.$ The cases where $\alpha=0$ or
$\alpha$ is limit are straightforward, so we focus on the successor case.
Assume the lemma is true for $\alpha,$ we will prove it is also true for
$\alpha+1.$ Let $p\in\mathbb{P}_{\alpha+1},$ we may assume that $\alpha\in
\dom\left(  p\right)  .$

\begin{case}
$\alpha\in E.$
\end{case}

First, we find $p_{1}\leq p\upharpoonright\alpha$ such that there are
$m_{\alpha}\in\omega\ $, $z_{\alpha}:m_{\alpha}\longrightarrow\omega$ with
$z_{\alpha}\subseteq\Delta$ and $\dot{L}$ such that $p_{1}\Vdash``p\left(
\alpha\right)  =(z_{\alpha},m_{\alpha},\dot{L})\mrq.$ By extending $p$ and $p_1$, we may
even assume that $p_{1}\Vdash``4\left\vert \dot{L}\right\vert \leq m_{\alpha
}\mrq$. So we may find $p_2\leq p_1$, $k_\xi\leq \frac{m_\xi}{4}$ and names $\rho_0, \dots, \rho_{k_\xi-1}$ such that $p_2\Vdash \dot L=\{\rho_0, \dots, \rho_{k_\xi-1}\}$. Let $\dot J=\{(\rho_0, \mathbbm 1_{\mathbb P_\xi}), \dots, (\rho_{k_\xi-1}, \mathbbm 1_{\mathbb P_\xi})\}$. By the inductive hypothesis, let $q\leq p_{2}$ be a pure
condition. Define $\overline{q}\in\mathbb{P}_{\alpha+1}$ such that the
following holds:

\begin{enumerate}
\item $\overline{q}\upharpoonright\alpha=q.$

\item $\overline{q}\left(
\alpha\right)  =(z_{\alpha},m_{\alpha},\dot{J})$.
\end{enumerate}

It is easy to see that $\overline{q}$ is a pure extension of $p.$

\begin{case}
$\alpha\in D_{\gamma}$ (for some $\gamma\in\omega_{1}$).
\end{case}

First, we find $p_{1}\leq p\upharpoonright\alpha$ such that there are
$m\in\omega,$ $s\in2^{m}$ and $J_{\alpha}\in\left[  D_{\alpha}\cap
\alpha\right]  ^{<\omega}$ such that $p_{1}\Vdash``p\left(  \alpha\right)
=(s,\{\dot{A}_{\gamma}^{\eta}\mid\eta\in J_{\alpha}\})\mrq,$ we
may assume that $J_{\alpha}\subseteq \dom\left(  p_{1}\right)  .$ By the
inductive hypothesis, let $q\leq p_{1}$ be a pure condition, let $n$
witnessing that $q$ is pure, without lost of generality, we may assume that
$m<n.$ Let $s_{\alpha}\in2^{n}$ such that $s_{\alpha}\upharpoonright m=s$ and
$s_{\alpha}\left(  i\right)  =0$ for every $i\in\lbrack m,n).$ Define
$\overline{q}\in\mathbb{P}_{\alpha+1}$ such that the following holds:

\begin{enumerate}
\item $\overline{q}\upharpoonright\alpha=q.$

\item $\overline{q}\left(
\alpha\right)  =(s_{\alpha},\{\dot{A}_{\gamma}^{\eta}\mid\eta\in J_{\alpha
}\})$.
\end{enumerate}

It is easy to see that $\overline{q}$ is a pure extension of $p.$

\begin{case}
$\alpha\in H.$
\end{case}

First, we find $p_{1}\leq p\upharpoonright\alpha$ such that there are
$m\in\omega,$ $z_{\alpha}:m\longrightarrow\omega$ with $z_{\alpha}%
\subseteq\Delta$ and $J_{\alpha}\in\left[  H\cap\alpha\right]  ^{<\omega}$
such that $p_{1}\Vdash``p\left(  \alpha\right)  =(z_{\alpha},m,\{\dot{f}%
_{\eta}\mid\eta\in J_{\alpha}\})\mrq,$ we may also assume that
$4\left\vert J_{\alpha}\right\vert <m$ and that $J_{\alpha}\subseteq
\dom\left(  p_{1}\right)  .$ By the inductive hypothesis, let $q\leq p_{1}$ be
a pure condition, let $n$ witnessing that $q$ is pure, without lost of
generality, we may assume that $m<n$ and $J_{\alpha}\subseteq \dom\left(
q\right)  .$ Let $z_{\alpha}:n\longrightarrow\omega$ such that $z_{\alpha
}\subseteq\Delta,$ $z_{\alpha}\upharpoonright m=s$ and $z_{\alpha}\left(
i\right)  \neq z_{\xi}\left(  i\right)  $ for every $i\in\lbrack m,n)$ and
$\xi\in J_{\alpha}.$ Define $\overline{q}\in\mathbb{P}_{\alpha+1}$ such that
the following holds:

\begin{enumerate}
\item $\overline{q}\upharpoonright\alpha=q.$

\item $\overline{q}\left(
\alpha\right)  =(z_{\alpha},n,\{\dot{f}_{\eta}\mid\eta\in J_{\alpha
}\})$.
\end{enumerate}

It is easy to see that $\overline{q}$ is a pure extension of $p.$
\end{proof}

\begin{lemma}
Let $\alpha\leq\omega_{2},$ $p\in\mathbb{P}_{\alpha}$ a pure condition and
$m\in\omega.$ There is $q\in\mathbb{P}_{\alpha}$ with the following
properties: \label{pure crecer stems}

\begin{enumerate}
\item $q\leq p.$

\item $q$ is pure.

\item If $\beta\in \dom\left(  q\right)  $ then $m\leq len\left(  q\left(
\beta\right)  \right)  .$
\end{enumerate}
\end{lemma}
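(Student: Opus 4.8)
The plan is to prove this lemma by induction on $\alpha$, following the same structure used in the proof that pure conditions are dense. The statement says: given a pure condition $p$, we can find a pure extension $q \leq p$ such that every coordinate $q(\beta)$ has stem-length at least $m$. Intuitively, we want to simultaneously ``grow the stems'' at all coordinates in $\dom(p)$ while preserving purity. The key observation is that since $p$ is pure, it has a single common height $n$ governing the lengths of stems at the $D_\gamma$- and $H$-coordinates, while the $E$-coordinates may have their own lengths $m_\xi$; we need to push all of these past $m$.

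First I would handle the base case $\alpha=0$ and the limit case, which are routine (for the limit case, $\dom(p)$ is finite by finite support, so one deals with finitely many coordinates). For the successor case $\alpha+1$, I would assume the result at $\alpha$ and be given a pure $p \in \mathbb{P}_{\alpha+1}$ with $\alpha \in \dom(p)$. The strategy is: first apply the inductive hypothesis to $p \upharpoonright \alpha$ to obtain a pure $q_0 \leq p\upharpoonright\alpha$ all of whose stems have length $\geq m$, and then extend the top coordinate $q(\alpha)$ appropriately depending on which of the three cases ($\alpha \in E$, $\alpha \in D_\gamma$, $\alpha \in H$) we are in. The method of extending the top stem is exactly as in the density lemma: for $\alpha \in D_\gamma$ we extend the $2^n$-valued stem with $0$'s; for $\alpha \in H$ we extend the partial function $z_\alpha$ by choosing values below the diagonal that differ from each $f_\eta$ ($\eta \in J_\alpha$) on the new part of the domain (possible since $\Delta$ has infinitely many points above the diagonal at each coordinate and $J_\alpha$ is finite); and for $\alpha \in E$ we similarly extend $z_\alpha$ below the diagonal while respecting the side condition.

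The subtlety I expect to be the main obstacle is bookkeeping the \emph{common height} $n$ so that purity is genuinely preserved. Growing stems at lower coordinates changes the witnessing height $n$ for the pure condition below $\alpha$, and this height also dictates the length of stems required at $H$- and $D_\gamma$-coordinates (points 1 and 4 of the purity definition reference a single $n$). So I would make sure to first decide a target height $n' \geq m$ for the lower part via the inductive hypothesis, and only then extend the top coordinate to match: for the $H$-case in particular, one must check that after lengthening, the condition $4|J_\alpha| \leq n'$ still holds (which is immediate since we only increase $n$), and that the newly extended stem $z_\alpha$ remains compatible with the original condition, i.e.\ $\overline{q}(\alpha) \leq p(\alpha)$ in $\mathbb{E}_\Delta(\mathcal{B}_\alpha, A_\gamma^\beta)$, which requires the new stem values to avoid the values of the side-condition functions on the added domain. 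A parallel check is needed for the $D_\gamma$-case (the extension must respect the Mathias ordering, so the new $1$'s in the stem must avoid the side-condition sets—this is why we extend by $0$'s).

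Once these extensions are defined, verifying that $\overline{q}$ is pure (with height $n'$), that $\overline{q} \leq p$, and that every coordinate now has length $\geq m$ is routine from the definitions. I would remark that the whole argument is a straightforward elaboration of the density-of-pure-conditions lemma, with the only genuinely new content being the uniform lower bound $m$ on the lengths, which is achieved by feeding $\max\{m, n_p\}$ (rather than an arbitrary height) into the inductive step.
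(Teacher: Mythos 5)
Your proposal follows the paper's proof in outline: induction on $\alpha$ with routine zero and limit cases, the same three-way case split at successor stages, extension of the stem by $0$'s at $D_\gamma$-coordinates, and the same height bookkeeping (feeding $\max\{m,n\}$ into the inductive hypothesis and then matching the top stem to the height of the resulting lower condition). However, there is a genuine gap in your treatment of the $E$-coordinates (and, as you render it, the $H$-coordinates). The side conditions at those coordinates are not actual functions but $\mathbb{P}_\alpha$-names: at an $E$-coordinate a pure condition has third entry $\dot J$ determined by names $\rho_0,\dots,\rho_{k_\xi-1}$ for elements of $Fun(\Delta)$, and at an $H$-coordinate the side conditions are the names $\dot f_\eta$ for generic reals of earlier stages. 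Your instruction to ``choose values below the diagonal that differ from each $f_\eta$ on the new part of the domain'' (and to ``respect the side condition'' at $E$-coordinates) cannot be executed as stated: purity demands that the new stem be a real object, yet the values of these names on the new domain are not determined by the condition below $\alpha$, so no concrete choice of numbers is \emph{forced} to satisfy the avoidance clause of the extension ordering of $\mathbb{E}_\Delta$. (Choosing the new values as names, e.g.\ ``the least value avoiding everything in $\dot F$'' as is done in the non-pure Lemma \ref{crecer stems}, is not available here, since that destroys purity.) The paper closes exactly this gap: in the $E$-case it demands that the condition $q\leq p\upharpoonright\alpha$ obtained below $\alpha$ not only be pure with stems of length $\geq m$, but also \emph{decide} the side conditions up to $m$, i.e.\ $q\Vdash``\rho_j\upharpoonright m=w_j\mrq$ for actual functions $w_j:m\longrightarrow\omega$, and only then are the new stem values chosen so as to avoid the $w_j$. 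This preliminary decision step (achievable since only finitely many names for reals are involved, and preserved under the later extensions) is the idea missing from your proposal, and it is the main content of the successor step.

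Two smaller points. First, your justification for why avoidance is possible --- that ``$\Delta$ has infinitely many points above the diagonal at each coordinate'' --- is false: column $i$ of $\Delta$ contains exactly $i+1$ points. What creates room for avoidance is the size requirement carried by the forcing and by purity ($2\left\vert F_p\right\vert\leq n_p$, respectively $4\left\vert J_\xi\right\vert\leq n$), which guarantees that at each new coordinate of the stem there are fewer functions to dodge than available values. Second, at $H$-coordinates the paper's bookkeeping makes matters easier than you suggest: there the stem of a pure condition is a function on $s_\beta^{-1}(1)$, and since the associated $D_\gamma$-stem $s_\beta$ is extended by $0$'s, this set does not grow when the height increases, so essentially only the length parameter of the $H$-coordinate needs to be updated rather than new stem values chosen.
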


\begin{proof}
We prove the lemma by induction on $\alpha.$ The cases where $\alpha=0$ or
$\alpha$ is limit are straightforward, so we focus on the successor case.
Assume the lemma is true for $\alpha,$ we will prove it is also true for
$\alpha+1.$ Let $p\in\mathbb{P}_{\alpha+1},$ we may assume that $\alpha\in
\dom\left(  p\right)  .$

\begin{case}
$\alpha\in E.$
\end{case}

Suppose $p\left(  \alpha\right)  =(z_{\alpha},m_{\alpha},\dot{J}).$ In case that
$m\leq m_{\alpha},$ we apply the inductive hypothesis to $p\upharpoonright
\alpha$ and we are done. Assume that $m_{\alpha}<m.$ By the inductive
hypothesis, we may find $q\leq p\upharpoonright\alpha$ such that the following holds:

\begin{enumerate}
\item $q$ is pure.

\item If $\beta\in \dom\left(  q\right)  $ then $q(\beta)\geq m$.

\item For every $j<k_\xi$ there is $w_{j}:m\longrightarrow
\omega$ such that $q\Vdash``\rho_j\upharpoonright m=w_{j}%
\mrq.$
\end{enumerate}

We now define $s:m\longrightarrow\omega,$ with $s\subseteq\Delta$ such that
$z_{\alpha}\subseteq s$ and $s\left(  i\right)  \neq w_{j}\left(
i\right)  $ for every $i\in(m_{\alpha},m]$ and $j<n$. It is
clear that $q^{\frown}(s,m,\dot{J})$ has the desired properties.

\begin{case}
$\alpha\in D_{\gamma}$ (for some $\gamma\in\omega_{1}$).
\end{case}

Suppose $p\left(  \alpha\right)  =(s_{\alpha},\{\dot A_\gamma^\eta: \eta \in \dot{J}_{\alpha}\})$ and $n$ is such that
$s_{\alpha}:n\longrightarrow2.$ By the inductive hypothesis, we may find
$q\leq p\upharpoonright\alpha$ such that the following holds:

\begin{enumerate}
\item $q$ is pure.

\item If $\beta\in \dom\left(  p\right)  $ then $q(\beta)\geq \max\{m, n\}$.
\end{enumerate}

Let $k$ be the height of $q.$ We now define $z:k\longrightarrow2$ such that
$s_{\alpha}\subseteq z$ and $z\left(  i\right)  =0$ for every $i\in\lbrack
n,k).$ It is clear that $q^{\frown}(z,\dot{J}_{\alpha})$ has the desired properties.

\begin{case}
$\alpha\in H.$
\end{case}

Similar to the previous cases.
\end{proof}

\begin{definition}
Let $\alpha\leq\omega_{2}$ and $p\in\mathbb{P}_{\alpha}$ a pure condition. We
say that $p$ has the \emph{descending condition }if for every $\beta_{1}%
,\beta_{2}\in \dom\left(  p\right)  \cap E,$ if $\beta_{1}<\beta_{2},$ then
$len\left(  p\left(  \beta_{1}\right)  \right)  \geq len\left(  p\left(
\beta_{2}\right)  \right)  .$
\end{definition}

Using the previous lemma and induction, we get the following:

\begin{lemma}
For every $\alpha\leq\omega_{2},$ pure conditions with the descending
condition are dense. \label{pure descending}
\end{lemma}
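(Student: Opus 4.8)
The plan is to prove the statement by induction on $\alpha$, with all the real work occurring at successor stages. The cases $\alpha=0$ and $\alpha$ limit are immediate: at a limit stage any $p\in\mathbb{P}_\alpha$ has finite support, hence lies in $\mathbb{P}_{\alpha'}$ for some $\alpha'<\alpha$, so one simply applies the inductive hypothesis there. For the successor step I would first replace the given condition by a pure one (density of pure conditions) and then exploit the fact that the descending condition constrains only the coordinates lying in $E$; the coordinates in $\bigcup_\gamma D_\gamma$ and in $H$ are completely free as far as the descending condition is concerned.

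So write $p\in\mathbb{P}_{\alpha+1}$ pure with $\alpha\in\dom(p)$. If $\alpha\notin E$, then attaching the coordinate $\alpha$ changes nothing about the $E$-coordinates, and it suffices to apply the inductive hypothesis to $p\restriction\alpha$ (with a length parameter large enough to exceed $len(p(\alpha))$) and then reattach $p(\alpha)$, extending its stem up to the common height of the resulting pure condition exactly as in the proof that pure conditions are dense; the descending condition is inherited verbatim. The delicate case is $\alpha\in E$. Here $\alpha$ is the largest element of $\dom(p)\cap E$, so for the descending condition we must arrange that every $E$-coordinate below $\alpha$ ends up with length at least $len(p(\alpha))$, while simultaneously keeping those lower $E$-coordinates in non-increasing order of length.

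I expect this last point to be the main obstacle, and I would overcome it by strengthening the inductive hypothesis to: \emph{for every pure $p$ and every $m\in\omega$ there is a pure $q\le p$ with the descending condition such that $len(q(\beta))\ge m$ for every $\beta\in\dom(q)$}. The need for this strengthening is precisely that Lemma \ref{pure crecer stems} alone raises all stems above a given bound but overshoots by uncontrolled amounts, and therefore scrambles the relative order of the lower $E$-coordinates; the raising and the ordering must be done at once, which is exactly what the uniform lower bound $m$ in the strengthened hypothesis enforces (note also that new coordinates forced into the domain to carry side conditions are thereby kept long, rather than being allowed to appear with short stems). Granting the strengthened hypothesis, the case $\alpha\in E$ is handled by applying it to $p\restriction\alpha$ with parameter $\max(m,len(p(\alpha)))$, producing a pure descending condition all of whose stems have length at least $len(p(\alpha))$, and then setting the stem at $\alpha$ to that common lower bound, using the stem-growing mechanism of Lemma \ref{pure crecer stems}, which also decides the relevant finite part of the side-condition names at $\alpha$ so that the new stem values can be chosen to avoid them. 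Since $\alpha$ thereby becomes a shortest $E$-coordinate while sitting above all the others, the descending condition is preserved. Finally, the stated lemma follows by first passing to a pure condition below an arbitrary one and then applying the strengthened hypothesis with $m=0$.
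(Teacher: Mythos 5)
Your proposal is correct and follows the same skeleton as the paper's proof: induction on $\alpha$, with the only substantive case being $\alpha\in E$, handled by forcing every $E$-coordinate below $\alpha$ to be at least as long as the stem at $\alpha$. The difference is organizational. The paper does not strengthen the induction hypothesis; instead it cites Lemma \ref{pure crecer stems} --- which is exactly your strengthened statement minus the descending condition, proved just beforehand --- together with the plain inductive hypothesis, to obtain a pure, descending $q\leq p\upharpoonright\alpha$ all of whose stems are longer than $n=len(p(\alpha))$, and then simply appends $p(\alpha)=(s,n,\dot J)$ untouched: since purity tolerates an $E$-coordinate whose length differs from the height, no stem needs to be grown at $\alpha$ and no side-condition names need to be decided there, which is the extra work your version incurs. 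Your stated reason for the strengthening --- that Lemma \ref{pure crecer stems} used as a black box only guarantees $len(q(\beta))\geq m$ and could therefore ``scramble'' the relative order of the lower $E$-coordinates --- is a fair criticism of the lemma's statement, but not of its proof: the construction there replaces each length by $\max\{m,\cdot\}$ (this is made explicit in the sibling Lemma \ref{crecer stems}, whose proof notes that this already yields the $K$-descending condition), so the paper's one-line combination is sound once one looks inside that proof. In short, your single simultaneous induction buys an argument that is watertight at the level of statements alone, at the cost of regrowing the stem at $\alpha$ and deciding names there; the paper's route buys brevity and modularity by reusing the stem-growing lemma, at the cost of implicitly relying on how that lemma is proved.
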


\begin{proof}
We prove the lemma by induction on $\alpha.$ The cases where $\alpha=0$ or
$\alpha$ is limit are straightforward, so we focus on the successor case.
Assume the lemma is true for $\alpha,$ we will prove it is also true for
$\alpha+1.$ Let $p\in\mathbb{P}_{\alpha+1}$ be a pure condition, we may assume
that $\alpha\in \dom\left(  p\right)  .$ In case $\alpha\notin E,$ there is
nothing to do, so assume that $\alpha\in E.$

Let $p\left(  \alpha\right)  =\left(  s,n,\dot{J}\right)  ,$ by the inductive
hypothesis and Lemma \ref{pure crecer stems}, we can find $q\in\mathbb{P}%
_{\alpha}$ such that $q\leq p\upharpoonright\alpha,$ $q$ is pure with the descending condition and all the
stems in $q$ have size larger than $n$. It is clear that $q^{\frown}\left(  s,n,\dot{J}\right)  $ is the condition we are looking for.
\end{proof}

Although pure conditions are the nicest to work with, we will need to deal
with non-pure conditions for some arguments. We will develop the tools needed
in order to do this. First, we will recall a well known forcing lemma that
will be often used implicitly (for a proof, see Lemma 1.19 in the first
chapter of \cite{ProperandImproper}):

\begin{lemma}
Let $\mathbb{P}$ be a partial order, $A=\left\{  p_{\alpha}\mid\alpha\in
\kappa\right\}  \subseteq\mathbb{P}$ a maximal antichain and $\{\dot
{x}_{\alpha}\mid\alpha\in\kappa\}$ be a set of $\mathbb{P}$-names. There is a
$\mathbb{P}$-name $\dot{y}$ such that $p_{\alpha}\Vdash``\dot{y}=\dot
{x}_{\alpha}\mrq$ for every $\alpha\in\kappa.$
\end{lemma}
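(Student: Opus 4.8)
The plan is to construct the desired name $\dot{y}$ explicitly as a ``mixture'' of the names $\dot{x}_\alpha$ glued along the maximal antichain $A$, and then to verify that it works by directly computing interpretations under an arbitrary generic filter. First I would recall that a $\mathbb{P}$-name is a set of pairs $(\dot{z},q)$ with $q\in\mathbb{P}$, and that for a generic filter $G$ its interpretation is $\dot{x}_G=\{\dot{z}_G : (\dot{z},q)\in\dot{x},\ q\in G\}$. With this in mind I would set
$$\dot{y}=\{(\dot{z},q) : \exists\alpha\in\kappa\ \exists r\in\mathbb{P}\ [(\dot{z},r)\in\dot{x}_\alpha \wedge q\leq r \wedge q\leq p_\alpha]\}.$$
The idea is that, once we are below some $p_\alpha$, only the pairs inherited from $\dot{x}_\alpha$ can contribute to the interpretation, so $\dot{y}$ should locally agree with $\dot{x}_\alpha$.

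Next I would fix a generic filter $G$ together with an index $\alpha$ such that $p_\alpha\in G$; such an $\alpha$ exists because $A$ is \emph{maximal}, and it is unique because $A$ is an antichain while any two members of $G$ are compatible. I would then prove $\dot{y}_G=(\dot{x}_\alpha)_G$. For the inclusion $\dot{y}_G\subseteq(\dot{x}_\alpha)_G$, suppose $(\dot{z},q)\in\dot{y}$ with $q\in G$, witnessed by some $\beta$ and $r$; then $p_\beta\geq q\in G$ forces $p_\beta\in G$, so by uniqueness $\beta=\alpha$, and $r\geq q\in G$ forces $r\in G$, whence $\dot{z}_G\in(\dot{x}_\alpha)_G$. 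For the reverse inclusion, given $(\dot{z},r)\in\dot{x}_\alpha$ with $r\in G$, the filter $G$ contains a common lower bound $q\leq r$, $q\leq p_\alpha$, and then $(\dot{z},q)\in\dot{y}$ with $q\in G$, giving $\dot{z}_G\in\dot{y}_G$.

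Finally, since every generic filter $G$ containing $p_\alpha$ interprets $\dot{y}$ and $\dot{x}_\alpha$ identically, the forcing theorem yields $p_\alpha\Vdash \dot{y}=\dot{x}_\alpha$ for each $\alpha\in\kappa$, which is exactly the claim. The one place I would take particular care is the inclusion $\dot{y}_G\subseteq(\dot{x}_\alpha)_G$: this is where the antichain hypothesis is essential, since it is what forces $\beta=\alpha$ and thereby prevents pairs coming from a ``wrong'' name $\dot{x}_\beta$ from polluting the interpretation. Maximality, by contrast, is needed only to ensure that some $p_\alpha$ belongs to $G$, so that the forcing statement is not vacuous and $\dot{y}$ is pinned down on a dense set of conditions.
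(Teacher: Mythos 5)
Your proof is correct, but note that the paper never proves this lemma itself: it is stated as a well-known forcing fact with a pointer to Lemma 1.19 of the first chapter of \emph{Proper and Improper Forcing}. So your argument is necessarily a different route, namely the standard explicit ``mixing'' construction that the citation hides. The name $\dot{y}$ you define, the uniqueness-of-$\alpha$ argument (upward closure of $G$ gives $p_\beta\in G$, compatibility within the filter plus the antichain property gives $\beta=\alpha$), and the two inclusions establishing $\dot{y}_G=(\dot{x}_\alpha)_G$ are all exactly right, and passing from ``every generic $G$ containing $p_\alpha$ satisfies $\dot{y}_G=(\dot{x}_\alpha)_G$'' to $p_\alpha\Vdash\dot{y}=\dot{x}_\alpha$ is a legitimate application of the forcing theorem. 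One small sharpening of your closing remark: maximality of $A$ is not needed even for non-vacuity. The statement $p_\alpha\Vdash\dot{y}=\dot{x}_\alpha$ quantifies only over generics containing $p_\alpha$, and such generics exist for any condition whatsoever; your verification itself invokes only the antichain property. Maximality is what makes the lemma useful in applications, since it guarantees that \emph{every} generic filter interprets $\dot{y}$ as one of the $\dot{x}_\alpha$, but the conclusion as stated holds verbatim for an arbitrary antichain.
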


Let $A\in\left[  E\right]  ^{<\omega}$ and $K:A\longrightarrow\omega^{<\omega
}.$ We will say that $K$ is \emph{suitable }if $K\left(  \alpha\right)
\subseteq\Delta$ for every $\alpha\in A.$ Let $K:A\longrightarrow
\omega^{<\omega}$ be suitable. We say that $q\in\mathbb{P}_{\omega_{2}}$
\emph{follows }$K$ if the following holds:

\begin{enumerate}
\item $A\subseteq \dom\left(  q\right)  .$

\item If $\alpha\in A$ then $q\upharpoonright\alpha\Vdash``q\left(
\alpha\right)  =\left(  K\left(  \alpha\right)  ,\left\vert K\left(
\alpha\right)  \right\vert ,\dot{F}\right)  \mrq$ (for some
$\dot{F}$).
\end{enumerate}

\begin{definition}
Let $A\in\left[  E\right]  ^{<\omega}$ and $K:A\longrightarrow\omega^{<\omega
}$ be suitable. We say that $p\in\mathbb{P}_{\alpha}$ has the $K$%
\emph{-descending condition }if the following holds:

\begin{enumerate}
\item For every $\beta_{1},\beta_{2}\in\left(  \dom\left(  p\right)  \setminus
A\right)  \cap E$, if $\beta_{1}<\beta_{2},$
then $p\upharpoonright\beta
_{2}\Vdash``len\left(  p\left(  \beta_{1}\right)  \right)  \geq len\left(
p\left(  \beta_{2}\right)  \right)  \mrq$.

\item For every $\beta_{1},\beta_{2}\in \dom\left(  p\right)  \cap H,$ if
$\beta_{1}<\beta_{2},$ then $p\upharpoonright\beta_{2}\Vdash``len\left(
p\left(  \beta_{1}\right)  \right)  \geq len\left(  p\left(  \beta_{2}\right)
\right)  \mrq$.

\item For every $\gamma\in\omega_{1}$ and for every $\beta_{1},\beta_{2}\in
\dom\left(  p\right)  \cap D_{\gamma},$ if $\beta_{1}<\beta_{2},$ then
$p\upharpoonright\beta_{2}\Vdash``len\left(  p\left(  \beta_{1}\right)
\right)  \geq len\left(  p\left(  \beta_{2}\right)  \right)
\mrq$.

\item If $\beta=min\left(  \dom\left(  p\right)  \right)  ,$ then there is
$s\in\omega^{<\omega}$ such that $s$ is the stem of $p\left(  \beta\right)  $
(i.e., the stem of $p\left(  \beta\right)  $ is a real object, not just a
name) and for every $\eta\in \dom\left(  p\right)  \setminus A,$ we have that
$p\upharpoonright\eta\Vdash``len\left(  p\left(  \beta\right)  \right)  \geq
len\left(  p\left(  \eta\right)  \right)  \mrq.$
\end{enumerate}
\end{definition}

This new notion does not clash with our previous terminology, since pure
conditions with the descending condition (essentially) satisfy the $\emptyset
$-descending condition. We now introduce the following notions:

\begin{definition}
Let $\alpha\in\omega_{2},$\ $A\in\left[  E\cap\alpha\right]  ^{<\omega}$ and
$K:A\longrightarrow\omega^{<\omega}$ suitable.

\begin{enumerate}
\item Let $\mathbb{P}_{\alpha}^{K}$ be the set of all $p\in\mathbb{P}_{\alpha
}$ such that the following conditions hold:

\begin{enumerate}
\item $p$ follows $K.$

\item $p$ satisfies the $K\,$-descending condition.

\item For every $\beta\in \dom\left(  p\right)  \cap\left(  H\cup E\right)  ,$
if $p\left(  \beta\right)  =\left(  \dot{s},\dot{m},\dot{F}\right)  ,$ then
$p\upharpoonright\beta\Vdash``4\left\vert \dot{F}\right\vert \leq\dot
{m}\mrq.$
\end{enumerate}
\end{enumerate}
\end{definition}

The following result is similar to Lemma \ref{pure crecer stems}:

\begin{lemma}
Let $\alpha\leq\omega_{2}$, $A\in\left[  E\cap\alpha\right]  ^{<\omega
},\,\ K:A\longrightarrow\omega^{<\omega}$ suitable$,$ $p\in\mathbb{P}_{\alpha
}^{K}$ and $m\in\omega.$ There is $q$ such that the following holds:
\label{crecer stems}

\begin{enumerate}
\item $q\in\mathbb{P}_{\alpha}^{K}.$

\item $\dom\left(  q\right)  =\dom\left(  p\right)  .$

\item $q\leq p.$

\item If $\beta\in A,$ then $q\left(  \beta\right)  =p\left(  \beta\right)  .$

\item If $\beta\in \dom\left(  q\right)  \setminus A$ then $q\upharpoonright
\beta \Vdash len\left(  q\left(  \beta\right)  \right)  =max\left\{
m,len\left(  p\left(  \beta\right)  \right)  \right\}  .$
\end{enumerate}
\end{lemma}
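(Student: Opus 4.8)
The plan is to prove the lemma by induction on $\alpha$, following the template of Lemma~\ref{pure crecer stems}, but accommodating one essential new constraint: here we must keep $\dom(q)=\dom(p)$ while growing the stems. Since a legal stem extension at a coordinate in $E\cup H$ depends on the values taken by the (finitely many) functions in its side condition, and those are in general names, the natural move of \emph{deciding} those values would enlarge the support and is therefore forbidden. To get around this I would grow every stem \emph{by a name} rather than by a concrete object: at each coordinate $\beta$ I produce a $\mathbb P_\beta$-name $\dot z_\beta'$ that $q\upharpoonright\beta$ forces to be a suitable extension of the old stem. The cases $\alpha=0$ and $\alpha$ limit are routine, because by finite support a condition at a limit stage lives in some $\mathbb P_{\alpha'}$ with $\alpha'<\alpha$ and $A\subseteq\alpha'$, so the statement localizes and the inductive hypothesis applies verbatim.

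For the successor step $\alpha+1$ I may assume $\alpha=\max(\dom(p))\in\dom(p)$. First I apply the inductive hypothesis to $p\upharpoonright\alpha$, with $A\cap\alpha$, $K\upharpoonright(A\cap\alpha)$ and the same $m$, obtaining $q_0\le p\upharpoonright\alpha$ in $\mathbb P_\alpha^{K\upharpoonright(A\cap\alpha)}$ that grows every non-$A$ stem below $\alpha$ to $\max\{m,len(p(\beta))\}$ while fixing the $A$-coordinates. It then remains to attach the top coordinate $\alpha$ according to its type. If $\alpha\in A$ (hence $\alpha\in E$) I keep $q(\alpha)=p(\alpha)$ with its frozen stem $K(\alpha)$. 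If $\alpha\in D_\gamma$ the coordinate carries a Mathias stem $s_\alpha:n\to2$, which I extend to length $\max\{m,n\}$ by padding with zeros; since this leaves $s_\alpha^{-1}(1)$ unchanged, the result is automatically below $p(\alpha)$ in $\mathbb M(\mathcal A_\gamma^\alpha)$. If $\alpha\in(E\setminus A)\cup H$, say $p(\alpha)=(\dot z,\dot m,\dot F)$ with $q_0\Vdash 4|\dot F|\le\dot m$, I let $\dot z'$ be a name such that $q_0$ forces ``$\dot z'$ extends $\dot z$, $\dot z'\subseteq\Delta$, $len(\dot z')=\max\{m,\dot m\}$, and $\dot z'(i)\ne f(i)$ for every $f\in\dot F$ and every new position $i$''. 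Such a name exists because below the diagonal there are $i+1$ admissible values at position $i$ while only $|\dot F|$ are forbidden, and all new positions exceed $4|\dot F|$; this is exactly the abundance of free values exploited in Lemma~\ref{lemma del 4} and Proposition~\ref{Compacidad E}. I then set $q=q_0{}^{\frown}(\dot z',\max\{m,\dot m\},\dot F)$, so that $\dom(q)=\dom(q_0)\cup\{\alpha\}=\dom(p)$.

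To finish I would verify $q\in\mathbb P_{\alpha+1}^K$. Following $K$ is immediate, as only the $A$-coordinates are constrained and they are untouched. The bound $4|\dot F|\le\dot m$ at $\alpha$ survives because $\dot F$ is unchanged while the length only increased. The $K$-descending clauses (1)--(3) survive because the stems are grown by the monotone operation $\ell\mapsto\max\{m,\ell\}$: if $p$ forced $len(p(\beta_1))\ge len(p(\beta_2))$ for same-type $\beta_1<\beta_2$, then the grown lengths satisfy the identical inequality. The delicate clause is (4), asking that the minimum of the support carry a concrete stem dominating all non-$A$ lengths. This is settled at the bottom of the induction, where the minimal coordinate $\beta_0$ has trivial past $p\upharpoonright\beta_0$, so its grown stem may be taken as a genuine finite sequence of length $\max\{m,len(p(\beta_0))\}$, and monotonicity of $\max$ again preserves its domination over the other grown coordinates.

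The main obstacle — and the only genuine departure from Lemma~\ref{pure crecer stems} — is the requirement $\dom(q)=\dom(p)$. In the pure lemma one may freely enlarge the support to decide side-condition values and make stems concrete; here that route is blocked, so the entire growth must be carried out with names, and the crux is checking that the name $\dot z'$ above really exists, i.e.\ that $q_0$ forces a diagonal extension avoiding the finitely many side-condition functions. This is precisely where the hypothesis $4|\dot F|\le\dot m$ (equivalently the structure of $\Delta$) is used. A secondary point requiring care is the bookkeeping for clause (4) when the minimal coordinate lies in $A$: its frozen stem is then not grown, and one must check that this is compatible with the standing conventions on $K$ under which the lemma is invoked.
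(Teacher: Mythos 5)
Your proposal follows essentially the same route as the paper's proof: induction on $\alpha$ with immediate base and limit cases, the inductive hypothesis applied to $p\upharpoonright\alpha$ with $K$ suitably restricted, and a case split at the top coordinate --- frozen at $A$-coordinates, zero-padding of the (possibly name) stem at $D_\gamma$-coordinates, and a $\mathbb{P}_\alpha$-name for a diagonal extension avoiding the finitely many side-condition functions at $H$ and $E\setminus A$ coordinates, where the paper realizes your counting argument by the explicit choice $\dot z(i)=\min\{j\mid\forall g\in\dot F\,(g(i)\neq j)\}$ (and, at $H$-coordinates, takes $\dom(\dot z)=\dot A_\gamma^\beta\cap\dot n$, the point your write-up leaves implicit). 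The paper likewise disposes of the $K$-descending condition by noting that conclusion (5) --- growth by the monotone operation $\ell\mapsto\max\{m,\ell\}$ --- already implies it, so your verification, including the remarks about clause (4), matches its argument.
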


\begin{proof}
Note that the last point already implies that $q$ satisfies the $K$-descending
condition. We proceed by induction, the cases $\alpha=0$ and $\alpha$ is limit
are immediate. Assume the lemma is true for $\alpha,$ we will now prove it for
$\alpha+1.$ We may assume that $\alpha\in \dom\left(  p\right)  .$

\begin{case}
$\alpha\notin H\cup E.$
\end{case}

Note that in particular, $\alpha\notin A.$ Let $p\upharpoonright\alpha \Vdash p\left(  \alpha\right)
=\left(  \dot{s},\dot{F}\right)  ,$ by the inductive hypothesis, there is
$q\leq p\upharpoonright\alpha$ as in the lemma. Let $\dot{k}$ be a
$\mathbb{P}_{\alpha}$-name for a natural number, such that $q\Vdash``\dot
{s}:\dot{k}\longrightarrow2\mrq.$ Let $\dot{z}$ be a
$\mathbb{P}_{\alpha}$-name such that $q$ forces the following:

\begin{enumerate}
\item $\dom\left(  \dot{z}\right)  =max\{m,\dot{k}\}.$

\item $\dot{s}\subseteq\dot{z}.$

\item If $i\in \dom\left(  \dot{z}\right)  \setminus \dom\left(  \dot{s}\right)
,$ then $\dot{z}\left(  i\right)  =0.$
\end{enumerate}

It is clear that $q^{\frown}(\dot{z},\dot{F})$ is the condition we were
looking for.

\begin{case}
$\alpha\in H.$
\end{case}

Let $\alpha\in H,$ $\gamma\in\omega_{1}$ and $\beta\in D_{\gamma}$ such that
$R\left(  \beta\right)  =\alpha.$ Let $p\in\mathbb{P}_{\alpha+1}^{K}$ with
$\alpha\in \dom\left(  p\right)  .$ By the inductive hypothesis, we may assume
that \thinspace$p\upharpoonright\alpha$ satisfy the properties in the
conclusion of the lemma$.$ Let $p\upharpoonright\alpha\Vdash p\left(  \alpha\right)  =(\dot{s},\dot{k}%
,\dot{F})$ and find $\dot{n}$ a $\mathbb{P}_{\alpha}$-name for $max\{\dot
{k},m\}.$ Let $\dot{z}$ be a $\mathbb{P}_{\alpha}$-name for a partial function
forced to have the following properties:

\begin{enumerate}
\item $\dot{z}\subseteq\Delta.$

\item $\dot{s}\subseteq\dot{z}.$

\item $\dom\left(  \dot{z}\right)  =\dot{A}_{\gamma}^{\beta}\cap\dot{n}$

\item for all $i\in \dom\left(  \dot{z}\right)  ,$ if $i\notin \dom\left(
\dot{s}\right)  ,$ then $\dot{z}\left(  i\right)  =min\left\{  j\mid\forall
g\in\dot{F}\left(  g\left(  i\right)  \neq j\right)  \right\}  .$
\end{enumerate}

It is clear that $p\upharpoonright\alpha^{\frown}\left(  \dot{z},\dot{n}%
,\dot{F}\right)  $ has the desired properties.

\begin{case}
$\alpha\in E$ and $\alpha\notin A.$
\end{case}

Similar to the previous case.

\begin{case}
$\alpha\in E$ and $\alpha\in A.$
\end{case}

Let $A_{1}=A\setminus\left\{  \alpha\right\}  $ and $K_{1}=K\upharpoonright
A_{1}$. By the inductive hypothesis (applied to $p\upharpoonright\alpha$ and
$K_{1}$) let $q\leq p\upharpoonright\alpha$ as in the lemma. It is easy to see
that $q^{\frown}p\left(  \alpha\right)  $ has the desired properties.
\end{proof}

We will need the following result, which is the generalization of
\ref{Compacidad E} for the iteration:

\begin{lemma}
Let $\alpha\leq\omega_{2},$ $D\subseteq\mathbb{P}_{\alpha}$ an open dense set,
$A\in\left[  E\cap\alpha\right]  ^{<\omega}$ and $K:A\longrightarrow
\omega^{<\omega}$ suitable. If $p\in\mathbb{P}_{\alpha}^{K}$, then there is
$q$ with the following properties: \label{Compacidad iteracion}

\begin{enumerate}
\item $q\in\mathbb{P}_{\alpha}^{K}$

\item $q\leq p.$

\item If $\beta\in A,$ then $q\left(  \beta\right)  =p\left(  \beta\right)  $.

\item There is $L\in\left[  D\right]  ^{<\omega}$ an antichain such that for
every $r\leq q,$ if $r$ follows $K,$ then $r$ is compatible with an element of
$L.$
\end{enumerate}
\end{lemma}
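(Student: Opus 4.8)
The plan is to carry out the compactness argument of Proposition~\ref{Compacidad E} ``in parallel'' over all the relevant coordinates at once, using the preparatory lemmas to reduce to conditions whose stems are completely controlled. First I would record that $\mathbb{P}_\alpha$ is ccc, being a finite support iteration of the $\sigma$-linked forcings $\mathbb{E}_\Delta(\cdot)$ (Lemma~\ref{lemma del 4}) and the $\sigma$-centered forcings $\mathbb{M}(\cdot)$; hence any maximal antichain of $D$ below $p$ is countable. Applying Lemma~\ref{crecer stems} I would first shrink $p$ to a condition $q\in\mathbb{P}_\alpha^K$ with $q\le p$, $\dom(q)=\dom(p)$, $q(\beta)=p(\beta)$ for $\beta\in A$, and all of whose non-$A$ stems are long and, by the $K$-descending condition, bounded in length by the stem at $\min(\dom(q))$. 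This is the analogue of the choice $k=\lfloor n/2\rfloor$ in Proposition~\ref{Compacidad E} and guarantees that the auxiliary conditions produced below are legitimate (i.e.\ satisfy $4|F|\le \mathrm{len}$).

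Next, fix an enumeration $\{r_m\mid m\in\omega\}$ of a maximal antichain of $\{r\in D\mid r\le q\}$ and suppose, toward a contradiction, that no finite initial segment witnesses the lemma: for every $m$ there is $p_m\le q$ following $K$ with $p_m\perp r_i$ for all $i\le m$. The crucial point is that incompatibility is preserved under strengthening, so I may apply the $K$-descending machinery (Lemmas~\ref{crecer stems} and~\ref{pure descending}) to replace each $p_m$ by a refinement in $\mathbb{P}_\alpha^K$ without destroying $p_m\perp r_0,\dots,r_m$. After this, the stems of $p_m$ are real objects, those in $A$ equal $K$, and the remaining stems are bounded in length by a single number $N_m$. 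Growing every stem to a common target length turns $N_m$ into a fixed $N$, and since stems are functions into $\omega$ contained in $\Delta$ (so $z(i)\le i$) there are only finitely many stems of length $\le N$ at each coordinate; passing to the infinite subsequence guaranteed by the $\Delta$-system lemma for the finite supports, and then to a further subsequence, I may assume the $p_m$ share a common support-root, identical stems, and side conditions of a fixed finite size at each active coordinate.

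Now the only data of $p_m$ still varying are its side conditions, and these live in a compact space: at coordinates in $E$ they are (names for) points of the compact space $Fun(\Delta)$, while at coordinates in $\bigcup_\gamma D_\gamma$ or in $H$ they are finite sets of the generic reals $\dot A_\gamma^\eta$, respectively $\dot f_\eta$, read off as points of $2^\omega$. Passing to a convergent subsequence and reassembling the limiting side conditions with the fixed stems produces a condition $p_\infty\le q$ following $K$. Since $\{r_m\}$ is a maximal antichain, $p_\infty$ is compatible with some $r_j$; let $w$ be a common extension. By convergence there is $m>j$ for which the side conditions of $p_m$ agree with those of $p_\infty$ below $\mathrm{len}(w)$ at every coordinate of $\dom(w)$, and the $\sigma$-linkedness facts of Lemma~\ref{lemma del 4} (conditions sharing a stem are compatible) then show that $p_m$ is itself compatible with $r_j$. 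This contradicts $p_m\perp r_j$, so some finite $L=\{r_0,\dots,r_k\}$ works; it is an antichain in $D$ below $q$, as required.

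I expect the main obstacle to be the bookkeeping forced by the fact that the side conditions are \emph{names} for functions and for generic reals rather than ground-model objects, so the accumulation-point step must be performed below a suitable condition and the transfer of compatibility from $p_\infty$ to $p_m$ must be read off only the finitely many coordinates in $\dom(w)$. The coordinates in $H$ are the most delicate: there the stem of the $\mathbb{E}_\Delta(\mathcal B_\xi,A_\gamma^\beta)$-generic is forced to live on the generic set $A_\gamma^\beta$, so one must check that the limiting stem together with the limiting side condition still defines a legitimate condition of $\dot{\mathbb{Q}}_\xi$, which is where Lemma~\ref{Basic E} and the descending control over the stem lengths are used. Organizing the coordinate cases by an induction on $\alpha$ (with the successor step absorbing the top coordinate through Proposition~\ref{Compacidad E}) is a convenient way to keep this bookkeeping finite.
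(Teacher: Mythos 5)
Your proposal founders on two concrete points, and together they are precisely the reason the paper does not (and cannot) run the compactness argument of Proposition~\ref{Compacidad E} globally. First, the normalization step fails. The conditions $p_m$ produced by your contradiction hypothesis are arbitrary extensions of $q$ following $K$: their stems at coordinates outside $A$, and the sizes of their supports, are unbounded in $m$. You may only \emph{lengthen} stems (Lemma~\ref{crecer stems}); you can never shorten them, since weakening a condition does not preserve the incompatibilities $p_m\perp r_0,\dots,r_m$ that you must keep (and those incompatibilities may be witnessed exactly by the long stems hitting the side conditions of the $r_i$). Hence ``growing every stem to a common target length'' cannot produce a single finite bound $N$ for all $m$, there are then infinitely many candidate stems, and since the support sizes are unbounded the infinite $\Delta$-system lemma need not apply either (a countable family of finite sets of unbounded size can fail to contain an infinite $\Delta$-system). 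So the reduction to ``only the side conditions vary'' is not available.

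Second, and more fundamentally, the side conditions live in a compact space only at $E$-coordinates. At $\xi\in D_\gamma$ a side condition is a finite subset of the AD family $\mathcal A_\gamma^\xi=\{\dot A_\gamma^\eta\mid\eta\in\xi\cap D_\gamma\}$, and at $\xi\in H$ a finite subset of $\mathcal B_\xi=\{\dot f_\eta\mid\eta\in\xi\cap H\}$; these are particular families indexed by ordinals below $\omega_2$, not closed subspaces of $2^\omega$ or of $Fun(\Delta)$. A pointwise limit of distinct members of an almost disjoint family is typically not a member of that family (for pairwise disjoint sets it is empty), so your limiting object $p_\infty$ need not be a condition at all. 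The Miller-style compactness is available only for $\mathbb{E}_\Delta$, whose side conditions range over \emph{all} of the compact set $Fun(\Delta)$, and only when the stem is frozen --- which is exactly why the lemma restricts attention to conditions following $K$, and why the paper invokes Proposition~\ref{Compacidad E} only at coordinates of $A$, working inside the intermediate extension where the quotient is literally $\mathbb{E}_\Delta$ with stem $K(\alpha)$. All remaining coordinates (successor $\alpha\notin A$ and limit stages) are handled in the paper with no compactness at all: the inductive hypothesis supplies a finite antichain $L=\{q_i\mid i<k\}$ for an auxiliary dense set $\overline D$ of the shorter iteration, and one amalgamates the witnesses $\overline q_i$ into a single name $\dot x$ for the top coordinate(s), forced to equal $\overline q_i(\alpha)$ below each $q_i$, after using Lemma~\ref{crecer stems} to restore the $K$-descending condition. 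Your closing remark that one could organize the proof as an induction on $\alpha$ absorbing the top coordinate through Proposition~\ref{Compacidad E} points toward the correct structure, but that is a different argument from the one you actually describe, and the bulk of your proposal (identical stems, convergent subsequence, the limit condition $p_\infty$) does not survive the objections above.
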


\begin{proof}
We prove the lemma by induction on $\alpha.$ The case where $\alpha=0$ is
clear. We will now prove it for $\alpha+1.$

\begin{case}
$\alpha\notin A.$
\end{case}

Define $\overline{D}$ as the set of all $q\in\mathbb{P}_{\alpha}$ for which
there exists $\overline{q}\in\mathbb{P}_{\alpha+1}$ with the following properties:

\begin{enumerate}
\item $\overline{q}\upharpoonright\alpha=q.$

\item $\overline{q}\in D.$

\item $q\Vdash``\overline{q}\left(  \alpha\right)  \leq p\left(
\alpha\right)  \mrq.$

\item There is $m_{q}\in\omega$ such that $q\Vdash``len\left(  \overline
{q}\left(  \alpha\right)  \right)  =m_{q}\mrq.$

\item In case $\alpha\in H\cup E,$ if $ q\Vdash\overline{q}\left(  \alpha\right)
=\left(  \dot{s},m_{q},\dot{F}\right)  ,$ then $ q\Vdash``4\left\vert \dot
{F}\right\vert \leq m_{q}\mrq.$
\end{enumerate}

It is easy to see that $\overline{D}$ is an open dense subset of
$\mathbb{P}_{\alpha}.$ By the inductive hypothesis, there is $\overline{p}\leq
p\upharpoonright\alpha$ as in the lemma, let $L\in\left[  \overline{D}\right]
^{<\omega}$ an antichain such that for every $q\leq\overline{p},$ if $q$
follows $K,$ then $q$ is compatible with an element of $L.$ Let $L=\left\{
q_{i}\mid i<k\right\}  $ for some $k\in\omega.$ For every $i<k,$ fix
$\overline{q}_{i}\in D$ as in the definition of $\overline{D}.$ Let $\beta
_{0}=min\left(  \dom\left(  p\right)  \right)  ,$ we now find $m\in\omega$ such
that $m>len\left(  p\left(  \beta_{0}\right)  \right)  \ $as well
as$\ m>m_{q_{i}}$ for every $i<k.$ Since $L$ is an antichain, we can find
$\dot{x}$ a $\mathbb{P}_{\alpha}$-name for an element of $\mathbb{\dot{Q}%
}_{\alpha}$ with the following properties:

\begin{enumerate}
\item $q_{i}\Vdash``\dot{x}=\overline{q}_{i}\left(  \alpha\right)
\mrq$ for every $i<k.$

\item $r\Vdash``\dot{x}=p\left(  \alpha\right)  \mrq$ for every
$r$ incompatible with every $q_{i}.$
\end{enumerate}

We now apply lemma \ref{crecer stems} to find $p_{1}$ with the following properties:

\begin{enumerate}
\item $p_{1}\in\mathbb{P}_{\alpha}^{K}.$

\item $p_{1}\leq\overline{p}.$

\item $\dom\left(  p_{1}\right)  =\dom\left(  \overline{p}\right)  .$
\item If $\gamma \in A$, then $p_1(\gamma)=\bar p(\gamma)$.

\item If $\beta\in \dom\left(  p_{1}\right)  \setminus A,$ then $p_{1}%
\upharpoonright\beta\Vdash``len\left(  p_{1}\left(  \beta\right)  \right)
=max\left\{  m,len\left(  \overline{p}\left(  \beta\right)  \right)  \right\}
\mrq.$
\end{enumerate}

Let $q=p_{1}$ $^{\frown}\dot{x}.$ We claim that $q$ has the desired
properties. In order to prove that $q\in\mathbb{P}_{\alpha+1}^{K},$ we only
need to prove that $q$ has the $K$-descending condition (the other properties
are true by definition). Note that $p_{1}$ forces that the length of the stem
of $\dot{x}$ is at most $m$ (since $p\in\mathbb{P}_{\alpha+1}^{K},$ then
$len\left(  p\left(  \alpha\right)  \right)  $ is forced to be at most
$len\left(  p\left(  \beta_{0}\right)  \right)  ,$ which is smaller than $m).$
Since the length of the stem in all the elements of $\dom\left(  p_{1}\right)
\setminus A$ is at least $m,$ it follows that $q$ has the $K$-descending
condition. Clearly $q\leq p$ and if $\beta\in A,$ then $q\left(  \beta\right)
=p\left(  \beta\right)  .$

Finally, let $L_{1}=\left\{  \overline{q}_{i}\mid i<k\right\}  \subseteq D$
and let $r\leq q$ be a condition following $K.$ We need to prove that $r$ is
compatible with an element of $L_{1}.$ Since $r\upharpoonright\alpha\leq
q\upharpoonright\alpha$ and it follows $K,$ we know there is $q_{i}\in L$ such
that $r\upharpoonright\alpha$ and $q_{i}$ are compatible. We claim that $r$
and $\overline{q}_{i}$ are compatible.

Let $r_{1}\in\mathbb{P}_{\alpha}$ be a common extension of both
$r\upharpoonright\alpha$ and $q_{i}.$ Define $\overline{r}=r_{1}^{\text{
\ }\frown}r\left(  \alpha\right)  ,$ we will prove that $\overline{r}$ extends
both $r$ and $\overline{q}_{i}.$ Clearly $\overline{r}\leq r$ and in order to
show that $\overline{r}\leq\overline{q}_{i},$ we only need to prove that
$r_{1}\Vdash``r\left(  \alpha\right)  \leq\overline{q}_{i}\left(
\alpha\right)  \mrq.$ Since $r_{1}\leq q_{i},$ we have that
$r_{1}\Vdash``\dot{x}=\overline{q}_{i}\left(  \alpha\right)
\mrq.$ We also know that $r\upharpoonright\alpha\Vdash``r\left(
\alpha\right)  \leq\dot{x}\mrq,$ we conclude that $r_{1}%
\Vdash``r\left(  \alpha\right)  \leq\overline{q}_{i}\left(  \alpha\right)
\mrq$ and we are done.

\begin{case}
$\alpha\in A$ (in particular, $\alpha\in E$).
\end{case}

Let $s=K\left(  \alpha\right)  $ and $n=\left\vert s\right\vert .$ In this
way, we have that $p\upharpoonright\alpha\Vdash``p\left(  \alpha\right)
=(s,n,\dot{F})\mrq$ for some $\dot{F}.$ Let $G\subseteq
\mathbb{P}_{\alpha}$ be a generic filter with $p\upharpoonright\alpha\in G.$
In $V\left[  G\right]  ,$ we define the set $\overline{D}=\{\dot{x}\left[
G\right]  \mid\exists q\leq p\upharpoonright\alpha\left(  q\in G\wedge
q^{\frown}\dot{x}\in D\right)  \}.$ It is easy to see that $\overline{D}$ is
an open dense subset of $\mathbb{E}_{\Delta}.$ By proposition
\ref{Compacidad E}, there is $Z\in\left[  \overline{D}\right]  ^{<\omega}$ an
antichain such that for every $x=\left(  s,n,J\right)  \in\mathbb{E}_{\Delta
},$ there is $z\in Z$ such that $x$ and $z$ are compatible.

Back in $V,$ define $B$ as the set of all $r\in\mathbb{P}_{\alpha}$ with the
following properties:

\begin{enumerate}
\item Either $r$ and $p\upharpoonright\alpha$ are incompatible, or

\item There are $k\in\omega$ and $Y^{r}=\{\dot{x}_{i}^{r}\mid i<k\}$ such that
$r\Vdash``\dot{Z}=\{\dot{x}_{i}^{r}[\dot{G}]\mid i<k\}\mrq$

\item $r^{\frown}\dot{x}_{i}^{r}\in D$ for every $i<k.$
\end{enumerate}

It is easy to see that $B$ is an open dense subset of $\mathbb{P}_{\alpha}.$
Let $K_{1}=K\upharpoonright\alpha.$ We apply the inductive hypothesis with
$p\upharpoonright\alpha,$ $B$ and $K_{1}.$ In this way, there are $q$ and $L$
with the following properties:

\begin{enumerate}
\item $q\leq p\upharpoonright\alpha.$

\item $q\in\mathbb{P}_{\alpha}^{K_{1}}.$

\item If $\beta\in A\setminus\left\{  \alpha\right\}  ,$ then $q\left(
\beta\right)  =p\left(  \beta\right)  $.

\item $L\in\left[  B\right]  ^{<\omega}$ is an antichain.

\item For every $q^{\prime}\leq q,$ if $q^{\prime}$ follows $K_{1},$ then
$q_{1}$ is compatible with an element of $L.$
\end{enumerate}

We now define $L_{1}=\{r^{\frown}\dot{x}_{i}^{r}\mid r\in L\wedge\dot{x}%
_{i}^{r}\in Y^{r}\},$ note that $L_{1}$ is a finite antichain of $D.$ Define
$\overline{q}=q^{\frown}p\left(  \alpha\right)  ,$ we claim that $\overline
{q}$ and $L_{1}$ have the desired properties. Clearly $\overline{q}%
\in\mathbb{P}_{\alpha+1}^{K}.$ Now, let $q_{1}\leq\overline{q}$ that follows
$K.$ Since $q_{1}\upharpoonright\alpha\leq\overline{q}\upharpoonright\alpha=q$
and $q_{1}\upharpoonright\alpha$ follows $K_{1},$ we know that there is $r\in
L$ compatible with $q_{1}\upharpoonright\alpha.$ Let $q_{2}\leq$
$q_{1}\upharpoonright\alpha,r$ and note that $q_{2}\Vdash``\dot{Z}=\{\dot
{x}_{i}^{r}[\dot{G}]\mid i<k\}\mrq,$ hence (without lost of
generality), there is $i$ such that $q_{2}$ forces that $q_{1}\left(
\alpha\right)  $ and $\dot{x}_{i}^{r}$ are compatible (recall that
$q_{1}\left(  \alpha\right)  $ is forced to be of the form $(s,n,\dot{J})$
since $q_{1}$ follows $K$). It follows that $q_{1}$ and $r^{\frown}\dot{x}%
_{i}^{r}$ are compatible.

Finally, we consider the case when $\alpha$ is a limit ordinal and the
proposition is true for every $\beta<\alpha.$ This case is similar to the one
where $\alpha\notin A.$ We first find $\beta<\alpha$ such that $A,\dom\left(
p\right)  \subseteq\beta.$ Define $\overline{D}$ as the set of all
$q\in\mathbb{P}_{\beta}$ such that there is $\overline{q}\in\mathbb{P}%
_{\alpha}$ with the following properties:

\begin{enumerate}
\item $\overline{q}\upharpoonright\beta=q.$

\item $\overline{q}\in D.$

\item If $\xi\in\left(  \dom\left(  \overline{q}\right)  \setminus\beta\right)
\cap\left(  H\cup E\right)  $ and $\bar q\upharpoonright \xi \Vdash \overline{q}\left(  \xi\right)  =(\dot
{z},\dot{m},\dot{J})$ then $\overline{q}\upharpoonright\xi\Vdash``4\left\vert
\dot{J}\right\vert \leq\dot{m}\mrq.$

\item $\overline{q}\upharpoonright\lbrack\beta,\alpha)$ has the decreasing condition.

\item There is $n_{\overline{q}}$ such that for every $\xi\in \dom\left(
\overline{q}\right)  \setminus\beta,$ the condition $\overline{q}%
\upharpoonright\xi\Vdash``len\left(  \overline{q}\left(  \xi\right)  \right)
\leq n_{\overline{q}}\mrq.$
\end{enumerate}

It is easy to see that $\overline{D}$ is an open dense subset of
$\mathbb{P}_{\beta}$ (it is dense by Lemma \ref{pure descending}). By the
induction hypothesis, there are $q\leq p$ following $K$ and an antichain
$L=\left\{  q_{i}\mid i<k\right\}  $ $\subseteq\overline{D}$ such that for
every $r\leq q$ that follows $K,$ $r$ is compatible with an element of $L.$
For every $i<k,$ choose $\overline{q}_{i}\in D$ witnessing that $q_{i}%
\in\overline{D}.$ Find $n\in\omega$ such that $n>n_{\overline{q}_{i}}$ for
every $q_{i}\in L.$ By Lemma \ref{crecer stems}, we may assume that all of the
stems in $\dom\left(  q\right)  \setminus A$ are forced to be larger than $n.$
Let $B_{i}=\dom(\overline{q}_{i})$ for every $i<k.$ We now define a condition
$\widehat{q}\in\mathbb{P}_{\alpha}$ with the following properties:

\begin{enumerate}
\item $\widehat{q}\upharpoonright\beta=q.$

\item $\dom(\widehat{q})=\dom\left(  q\right)  \cup%
{\textstyle\bigcup\limits_{i<k}}
B_{i}$

\item For every $i<k$ and $\xi\in B_{i},$ we have that $q_{i}\upharpoonright
\xi\Vdash``\widehat{q}\left(  \xi\right)  =\overline{q}_{i}\left(  \xi\right)
\mrq.$

\item For every $i<k$ and $\xi\in\alpha$ such that $\xi\notin\beta\cup B_{i},$
we have that $q_{i}\upharpoonright\xi\Vdash``\widehat{q}\left(  \xi\right)
=1_{\mathbb{\dot{Q}}_{\xi}}\mrq$ (where $1_{\mathbb{\dot{Q}%
}_{\xi}}$ is the name of the largest condition).

\item If $r\in\mathbb{P}_{\beta}$ is incompatible with every $q_{i}\in L$ and
$\xi\in%
{\textstyle\bigcup\limits_{i<k}}
B_{i},$ then $r\upharpoonright\xi\Vdash``\widehat{q}\left(  \xi\right)
=1_{\mathbb{\dot{Q}}_{\xi}}\mrq$
\end{enumerate}

Let $L_{1}=\{\overline{q}_{i}\mid i<k\},$ we will show that $\widehat{q}$ and
$L_{1}$ have the desired properties. It is easy to see that $\widehat{q}%
\in\mathbb{P}_{\alpha}^{K}.$ Now, let $r\leq\widehat{q}$ that follows $K.$
Clearly, $r\upharpoonright\beta$ extends $q$ and follows $K,$ so there is
$i<k$ such that $q_{i}$ is compatible with $r.$ It is easy to see that $r$ is
compatible with $\overline{q}_{i}.$
\end{proof}

\bigskip

We can now prove the following:

\begin{proposition}
There is a model of \textsf{ZFC }such that the following conditions hold:

\begin{enumerate}
\item $\mathfrak{c}=\omega_{3}.$

\item $\mathfrak{ie}=\omega_{2}.$

\item There are families $\{\mathcal{A}^{\gamma}\mid\gamma\in\omega_{1}\},$
$\mathcal{B}=\left\{  f_{\alpha}\mid\alpha\in\omega_{2}\right\}  $ such that
the following holds:

\begin{enumerate}
\item $\mathcal{A}^{\gamma}\subseteq\left[  \omega\right]  ^{\omega}$ is a
\textsf{MAD} family of size $\omega_{2}$ (for every $\gamma\in\omega_{1}$).

\item $\mathcal{B}\subseteq PFun\left(  \Delta\right)  $ is a \textsf{MAD} family.

\item Let $\pi:PFun\left(  \Delta\right)  \longrightarrow\left[
\omega\right]  ^{\omega}$ be the function defined by $\pi\left(  f\right)
=\dom\left(  f\right)  .$ We have that $\pi\upharpoonright\mathcal{B}%
:\mathcal{B\longrightarrow}%
{\textstyle\bigcup\limits_{\gamma\in\omega_{1}}}
\mathcal{A}_{\gamma}$ is bijective.
\end{enumerate}
\end{enumerate}
\end{proposition}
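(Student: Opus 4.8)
The plan is to show that the finite support iteration $\mathbb{P}_{\omega_2}$ built above does the job, run over a ground model $V$ with $\mathfrak c=\omega_3$ (obtained, say, from a model of $\mathsf{GCH}$ by adding $\omega_3$ Cohen reals). Since every iterand is $\sigma$-linked (Lemma \ref{lemma del 4}, the Mathias forcings being $\sigma$-centered), the whole iteration is ccc, so no cardinals collapse and, as $|\mathbb{P}_{\omega_2}|\le\omega_3$, one gets $\mathfrak c=\omega_3$ in $V[G]$, giving (1). The reflection fact I use throughout is that, by ccc, every real of $V[G]$ has a nice name of countable support and hence lies in some $V_\beta$ with $\beta<\omega_2$; in particular the $\omega_3$ reals of $V$ already live in $V_0$. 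Write $\mathcal A^\gamma=\{A^\xi_\gamma:\xi\in D_\gamma\}$ for $\gamma<\omega_1$ and $\mathcal B=\{f_\xi:\xi\in H\}$; by Lemmas \ref{Basic Mathias} and \ref{Basic E} these are almost disjoint and of size $\omega_2$, and the generic $f_{R(\beta)}$ added at coordinate $R(\beta)$ has domain exactly $A^\beta_\gamma$ when $\beta\in D_\gamma$. As $R$ is a bijection of $\bigcup_\gamma D_\gamma$ onto $H$ and the reals $A^\beta_\gamma$ are pairwise distinct (each is new over the model generated by the earlier coordinates), $\pi\!\restriction\!\mathcal B$ is a bijection onto $\bigcup_\gamma\mathcal A^\gamma$, which is clause (3)(c).

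Maximality of the $\mathcal A^\gamma$ follows the familiar pattern. If an infinite $X$ were almost disjoint from all of $\mathcal A^\gamma$, fix $\beta<\omega_2$ with $X\in V_\beta$; then $X\in\mathcal I(\mathcal A_\gamma^\xi)^+$ for every $\xi$, so Lemma \ref{Basic Mathias}(3) applied to the Mathias generic at any $\xi\in D_\gamma$ above $\beta$ makes $A^\xi_\gamma\cap X$ infinite, a contradiction; hence each $\mathcal A^\gamma$ is MAD of size $\omega_2$, giving (3)(a). Clause (3)(b), that $\mathcal B$ is MAD in $PFun(\Delta)$, is the crux. Given $g\in PFun(\Delta)$ almost disjoint from every member of $\mathcal B$, pick $\beta<\omega_2$ with $g\in V_\beta$. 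The iterated form of Lemma \ref{Basic E}(3) reduces the task to producing $\gamma$ and $\delta\in D_\gamma$ with $\delta\ge\beta$ and $|\dom(g)\cap A^\delta_\gamma|=\omega$: for then $R(\delta)>\delta\ge\beta$, so $g\!\restriction\!A^\delta_\gamma\in PFun_{A^\delta_\gamma}(\Delta)\cap V_{R(\delta)}$ is $\mathcal B_{R(\delta)}$-positive (positivity being automatic from $g$ being almost disjoint from $\mathcal B$), and Lemma \ref{Basic E}(3) then forces $f_{R(\delta)}\cap g$ infinite, contradicting the choice of $g$.

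I expect the existence of such a \emph{late} domain to be the main obstacle. By maximality of $\mathcal A^\gamma$, $\dom(g)$ meets some member of each family infinitely, but a priori only \emph{early} members (those of index $<\beta$) might be hit, and this could in principle occur in all $\omega_1$ families at once; ruling this out is what needs the heavy machinery. The plan is to argue that some family $\mathcal A^\gamma$ keeps $\dom(g)$ positive past stage $\beta$ — equivalently $\dom(g)\in\mathcal I(\mathcal A_\gamma\!\restriction\!\beta)^+$ for some $\gamma$ — using the finite-antichain compactness of Proposition \ref{Compacidad E} and Lemma \ref{Compacidad iteracion} together with the normal forms for conditions (density of pure conditions and the descending conditions of Lemmas \ref{crecer stems} and \ref{pure descending}), which are precisely the tools that let the $\omega_2$-generated family $\mathcal B$ be maximal against all $\omega_3$ reals while keeping control of where each real is caught.

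Finally $\mathfrak{ie}=\omega_2$ splits into two bounds. The upper bound $\mathfrak{ie}\le\omega_2$ is immediate from maximality of $\mathcal B$: no total $g\in Fun(\Delta)$ is almost disjoint from all of $\mathcal B$, so $\mathcal B$ witnesses the defining property of $\mathfrak{ie}$. For $\mathfrak{ie}>\omega_1$, given $\mathcal F\subseteq PFun(\Delta)$ of size $\omega_1$ fix $\beta<\omega_2$ with $\mathcal F\subseteq V_\beta$ (using $\operatorname{cf}(\omega_2)>\omega_1$) and pick $\alpha\in E$ above $\beta$; by Lemma \ref{Basic E}(2) the generic total function added by $\mathbb{E}_\Delta$ at stage $\alpha$ is eventually different from every total function of $V_\alpha$, hence — extending each member of $\mathcal F$ to a total function below the diagonal — from every member of $\mathcal F$, so $\mathcal F$ fails to witness $\mathfrak{ie}$. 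Combining the two bounds gives (2), and the construction is complete modulo the compactness-driven maximality of $\mathcal B$ flagged above.
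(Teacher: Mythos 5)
Your handling of the peripheral claims is correct and matches the paper: $\mathfrak c=\omega_3$ by ccc plus a counting argument; each $\mathcal A^\gamma$ is MAD by applying Lemma \ref{Basic Mathias}(3) at a coordinate $\xi\in D_\gamma$ past the stage where a putative counterexample appears; $\pi\upharpoonright\mathcal B$ is a bijection because $\dom(f_{R(\beta)})=A_\gamma^\beta$ and the generics are pairwise distinct; $\mathfrak{ie}\leq\omega_2$ follows from maximality of $\mathcal B$, and $\mathfrak{ie}\geq\omega_2$ from the $\mathbb E_\Delta$-generics at coordinates of $E$ (after extending partial functions to total ones). Your reduction of the maximality of $\mathcal B$ is also the right one: it suffices to show that for every infinite $X=\dom(g)$ appearing at stage $\beta$ there are $\gamma<\omega_1$ and $\delta\in D_\gamma\setminus\beta$ with $\left\vert X\cap A_\gamma^\delta\right\vert=\omega$, equivalently that $X\in\mathcal I(\mathcal A^\gamma)^+$ for some $\gamma$; then Lemma \ref{Basic E}(3) at stage $R(\delta)>\delta$ finishes, positivity of $g\upharpoonright A_\gamma^\delta$ over $\mathcal B_{R(\delta)}$ being automatic for a counterexample $g$.

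However, that positivity statement --- the paper's claim that $\bigcap_{\gamma\in\omega_1}\mathcal I(\mathcal A^\gamma)=[\omega]^{<\omega}$ --- is exactly what you leave unproved, and it is the heart of the proposition rather than a technical remainder: its proof occupies most of the section. Citing Proposition \ref{Compacidad E}, Lemma \ref{Compacidad iteracion} and the pure/descending normal forms does not close it, because the missing ingredient is not compactness but the elementary-submodel \emph{copying} argument that determines \emph{which} $\gamma$ works and why the ``capturing'' extension is compatible with conditions deciding $\dot X$. Note that the choice of $\gamma$ must depend on $X$: any member of $\mathcal A^\gamma$ is itself an infinite set lying in $\mathcal I(\mathcal A^\gamma)$, so no single family is positive for all reals, and nothing in your sketch supplies the dependence. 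In the paper one fixes a countable $M\prec H((2^{\omega_3})^+)$ with $\dot X,\mathbb P_{\omega_2}\in M$ and takes $\gamma\in\omega_1\setminus M$, so that $D_\gamma\cap M=\emptyset$; a pure condition $p$ (whose domain does meet $D_\gamma$) is copied to a condition $\overline p\in M$ with the same stems and the same configuration relative to the partition $\{D_\eta\},H,E$; Lemma \ref{Compacidad iteracion} is applied to $\overline p$ and the dense set of conditions deciding $\min(\dot X\setminus n)$, yielding $q\leq\overline p$ with $q\in M$ and a finite antichain $L$; finiteness of $Z=\{l_r\mid r\in L\}$ then allows one to extend the stem of $p$ at the coordinate $\xi$ by $1$'s (and by $0$'s at its other $D_\gamma$-coordinates) to a condition $p_Z$ forcing $Z\subseteq\dot A_\gamma^\xi$, and since no coordinate of $q$ or of $L$ lies in $D_\gamma$, the long case-by-case amalgamation produces $r\leq q,p_Z$; any common extension of $r$ with a member of $L$ forces $\dot X\cap\dot A_\gamma^\xi\nsubseteq k$. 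Without choosing $\gamma$ outside $M$ and without the copying step, there is no reason the finitely many conditions deciding a value of $\dot X$ can coexist with the stem extensions that push that value into $A_\gamma^\xi$. So your proposal is a correct frame around an open center: clauses (2) and (3)(b) both rest on the one claim whose proof is absent.
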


\begin{proof}
We start with a ground model such that $V\models\mathfrak{c}=\omega_{3}$ and
we will force with $\mathbb{P}_{\omega_{2}}.$ Let $G\subseteq\mathbb{P}%
_{\omega_{2}}$ be a generic filter. It is easy to see that $V\left[  G\right]
\models\mathfrak{c}=\omega_{3}.$ For every $\gamma\in\omega_{1},$ let
$\mathcal{A}^{\gamma}=\left\{  A_{\gamma}^{\alpha}\mid\alpha\in D_{\gamma
}\right\}  $. We have the following:

\begin{claim}
Let $\gamma\in\omega_{1}.$

\begin{enumerate}
\item $\mathcal{A}^{\gamma}\subseteq\left[  \omega\right]  ^{\omega}$ is a
\textsf{MAD} family of size $\omega_{2}.$

\item For every $X\in V\left[  G\right]  ,$ if $X\in\mathcal{I}\left(
\mathcal{A}^{\gamma}\right)  ^{+},$ then the set $\{\alpha\in D_{\gamma}%
\mid\left\vert X\cap A_{\gamma}^{\alpha}\right\vert =\omega\}$ has size
$\omega_{2}.$
\end{enumerate}
\end{claim}

The claim follows easily by Lemma \ref{Basic Mathias}. A more interesting fact
is the following:

\begin{claim}
$V\left[  G\right]  \models%
{\textstyle\bigcap\limits_{\gamma\in\omega_{1}}}
\mathcal{I}\left(  A^{\gamma}\right)  = [\omega]^{<\omega}.$
\end{claim}

The ideas for the proof the claim were inspired on the several
``copying'' arguments used in the method of
forcing with side conditions. The reader may consult \cite{PartitionProblems}
and \cite{NotesonForcingAxioms} to learn more about forcing with side conditions.

Let $\dot{X}$ be a $\mathbb{P}_{\omega_{2}}$-name for an infinite subset of
$\omega.$ Let $M\in V$ be a countable elementary submodel of \textsf{H}%
$(\left(  2^{\omega_{3}}\right)  ^{+})$ such that $\dot{X},\mathbb{P}%
_{\omega_{2}}\in M.$ Choose $\gamma\in\omega_{1}\setminus M,$ we will show
that $\dot{X}$ is forced to be in $\mathcal{I}\left(  \mathcal{A}^{\gamma
}\right)  ^{+}.$ In fact, we will prove that $\dot{X}$ will have infinite
intersection with every element of $\mathcal{A}^{\gamma}.$ Note that
$D_{\gamma}\cap M=\emptyset$ since $\gamma\notin M$ (recall that $\left\{
D_{\eta}\mid\eta\in\omega_{1}\right\}  \in M$ since $\mathbb{P}_{\omega_{2}%
}\in M$).

Let $\xi\in D_{\gamma},$ $k\in\omega$ and $p\in\mathbb{P}_{\omega_{2}}$ (in
general, $p\notin M$). We must find an extension of $p$ forcing that $\dot{X}$
and $A_{\gamma}^{\xi}$ intersect beyond $k.$ We may assume that $\xi\in
\dom\left(  p\right)  ,$ $p$ is pure and has the descending condition. Let $n$
be the height of $p.$ We may also assume that $n>k.$ For technical reasons,
assume that $0\in \dom\left(  p\right)  .$ Let $B=\dom\left(  p\right)  \cap M$
and $A=B\cap E.$ We now find $K:A\longrightarrow\omega^{<\omega}$ such that
$p$ follows $K.$ Note that $p\in\mathbb{P}_{\alpha}^{K}.$ Let $\dom\left(
p\right)  =\left\{  \alpha_{0},...,\alpha_{m}\right\}  $ where $\alpha
_{i}<\alpha_{j}$ whenever $i<j$. 

\begin{claim}
There is $\overline{p}\in M\cap\mathbb{P}_{\omega_{2}}$ such that for every
$i\leq m,$ the following holds:

\begin{enumerate}
\item $\overline{p}$ is pure of height $n.$

\item $\dom\left(  \overline{p}\right)  =\left\{  \delta_{0},...,\delta
_{m}\right\}  $ (where $\delta_{i}<\delta_{j}$ whenever $i<j$) and $B\subseteq
\dom\left(  \overline{p}\right)  .$

\item $\overline{p}\in\mathbb{P}_{\omega_{2}}^{K}.$

\item If $\alpha_{i}\in B,$ then $\delta_{i}=\alpha_{i}$.

\item If $\alpha_{i}\notin B,$ then $\delta_{i}<\alpha_{i}$.

\item $\alpha_{i}\in E$ if and only if $\delta_{i}\in E.$

\item $\alpha_{i}\in H$ if and only if $\delta_{i}\in H.$

\item For every $\eta\in M\cap\omega_{1},$ if $\alpha_{i}\in D_{\eta}$ then
$\delta_{i}\in D_{\eta}.$

\item For every $j\leq m,$ if $\alpha_{i},\alpha_{j}\in%
{\textstyle\bigcup\limits_{\eta\in\omega_{1}}}
D_{\eta}$ then $\alpha_{i},\alpha_{j}$ are in the same element of the
partition if and only if $\delta_{i},\delta_{j}$ are in the same element of
the partition.

\item If $\alpha_{i}\in H,$ then the following holds:

\begin{enumerate}
\item If $p\left(  \alpha_{i}\right)  =\left(  s_{\alpha_{i}},n,\{\dot f_\mu: \mu \in J_{\alpha_{i}}^{p}\}\right)  ,$ then $\overline{p}\left(  \delta_{i}\right)  =\left(
s_{\alpha_{i}},n,\{\dot f_\mu: \mu \in J_{\delta_{i}}^{\overline{p}}\}\right)  $ (i.e. the stem of
$p\left(  \alpha_{i}\right)  $ and $\overline{p}\left(  \delta_{i}\right)  $
is the same).

\item For every $j<i,$ we have that $\alpha_{j}\in J_{\alpha_{i}%
}^{p}$ if and only if $\delta_{j}\in J_{\delta_{i}}^{\overline{p}}.$
\end{enumerate}

\item If $\alpha_{i}\in
D_{\eta}$ for some $\eta<\omega_1$, then the following holds:

\begin{enumerate}
\item If $p\upharpoonright\alpha_{i}\Vdash``p\left(  \alpha_{i}\right)
=\left(  s_{\alpha_{i}},\{\dot A_\eta^\mu: \mu \in J_{\alpha_{i}}^{p}\}\right)  \mrq,$ then
$\overline{p}\upharpoonright\delta_{i}\Vdash``\overline{p}\left(  \delta
_{i}\right)  =\left(  s_{\alpha_{i}},\{\dot A_\eta^\mu: \mu \in J_{\delta_{i}}^{\overline{p}}\}\right)
\mrq$ (i.e. the stem of $p\left(  \alpha_{i}\right)  $ and
$\overline{p}\left(  \delta_{i}\right)  $ is the same).

\item For every $j<i,$ we have that $\alpha_{j}\in J_{\alpha_{i}%
}^{p}$ if and only if $\delta_{j}\in J_{\delta_{i}%
}^{\overline{p}}$ (where $\alpha_{i}\in D_{\eta}$ and $\delta_{i}\in
D_{\eta^{\prime}}$).
\end{enumerate}

\item If $\alpha_{i}\in E,$ then the following holds:

\begin{enumerate}
\item If $p\left(  \alpha_{i}\right)  =\left(  s_{\alpha_{i}},m_{\alpha_{i}%
},J_{\alpha_{i}}^{p}\right)  ,$ then $\overline{p}\left(  \delta_{i}\right)
=\left(  s_{\alpha_{i}}m_{\alpha_{i}},J_{\delta_{i}}^{\overline{p}}\right)  $
(i.e. the stems of $p\left(  \alpha_{i}\right)  $ and $\overline{p}\left(
\delta_{i}\right)  $ are the same).

\item If $\alpha_{i}\in M,$ then there exists names $\tau_0, \dots, \tau_{k_{\alpha_i}-1}$ such that $J_{\alpha_i}^{\bar p}=\{(\tau_0, \mathbbm 1), \dots, (\tau_{k_{\alpha_i}-1}, \mathbbm 1)\}$ (recall that in this case, $\alpha_{i}=\delta_{i}$).
\end{enumerate}
\end{enumerate}
\end{claim}

The claim is almost an immediate consequence of the elementarity of $M,$ point
5 is the only one that requires being slightly more careful. For every
$\alpha_{i}\notin B,$ we define the following:

\begin{enumerate}
\item $\xi_{i}^{0}=max\left(  B\right)  \cap\alpha_{i}$ (this is well defined
since $0\in B$).

\item $\xi_{i}^{1}=min\left(  M\cap\left(  \omega_{2}+1\right)  \setminus
\alpha_{i}\right)  .$
\end{enumerate}

Note that $\xi_{i}^{0},\xi_{i}^{1}\in M$ and $\xi_{i}^{0}<\alpha_{i}<\xi
_{i}^{1}.$ The claim then follows by applying elementarity and requiring that
$\xi_{i}^{0}<\delta_{i}<\xi_{i}^{1}.$ Since $\delta_{i}\in M$ and is smaller
that $\xi_{i}^{1},$ it follows that $\delta_{i}<\alpha_{i}.$

Let $\overline{p}$ be as in the claim. We now define
$$D=\left\{
r\in\mathbb{P}_{\omega_{2}}\mid\exists l_{r}\in\omega\left(  r\Vdash
``l_{r}=min\left(  \dot{X}\setminus n\right)  \mrq\right)
\right\}  .$$ 
Clearly $D\subseteq\mathbb{P}_{\omega_{2}}$ is an open dense
subset and $D\in M.$ Since $\overline{p}\in\mathbb{P}_{\omega_{2}}^{K},$
applying Lemma \ref{Compacidad iteracion}, there is $q\leq\overline{p}$ as in
the lemma. We may even assume that $q\in M.$ Note that in general, $q$ might
not be pure (we could extend it to a pure condition, but it might not follow
$K$ anymore). Let $L\in\left[  D\right]  ^{<\omega}$ such that for every
$r\leq q$, if $r$ follows $K,$ then $r$ is compatible with an element of $L.$
Let $Z=\left\{  l_{r}\mid r\in L\right\}  $ and note that $Z\cap n=\emptyset.$
It is clear that if $r\in L,$ then $r\Vdash``Z\cap\dot{X}\neq\emptyset
\mrq.$ Let $n_{1}=max\left(  Z\right)  +1.$

We now define the condition $p_{Z}$ with the following properties:

\begin{enumerate}
\item $\dom\left(  p_{Z}\right)  =\dom\left(  p\right)  .$

\item For every $\eta\in \dom\left(  p_{Z}\right)  ,$ the following holds:

\begin{enumerate}
\item If $\eta\notin D_{\gamma},$ then $p_{Z}\left(  \eta\right)  =p\left(
\eta\right)  .$

\item Let $\eta\in D_{\gamma}$ with $\eta\neq\xi.$ If $p\left(  \eta\right)
=\left(  s_{\eta}^{p},\{\dot A_\gamma^\mu: \mu \in J_{\eta}^{p}\}\right)  $ define $s_{\eta}^{p_{Z}}%
:n_{1}\longrightarrow2$ such that $s_{\eta}^{p}\subseteq s_{\eta}^{p_{Z}}$ and
$s_{\eta}^{p_{Z}}\left(  i\right)  =0$ for every $i\in\lbrack n,n_{1}).$ Let
$p_{Z}\left(  \eta\right)  =\left(  s_{\eta}^{p_{Z}},\{\dot A_\gamma^\mu: \mu \in J_{\eta}^{p}\}\right)  .$

\item If $p\left(  \xi\right)  =\left(  s_{\xi}^{p},\{\dot A_\gamma^\mu: \mu \in J_{\xi}^{p}\}\right)  $
define $s_{\xi}^{p_{Z}}:n_{1}\longrightarrow2$ such that $s_{\xi}^{p}\subseteq
s_{\xi}^{p_{Z}}$ and $s_{\xi}^{p_{Z}}\left(  i\right)  =1$ for every
$i\in\lbrack n,n_{1}).$ Let $p_{Z}\left(  \xi\right)  =\left(  s_{\xi
}^{p_{Z}},\{\dot A_\gamma^\mu: \mu \in J_{\xi}^{p}\}\right)  .$
\end{enumerate}
\end{enumerate}

Note that $p_{Z}\Vdash``Z\subseteq A_{\gamma}^{\xi}\mrq$. Since $J^p_\xi\subseteq \dom (p\upharpoonright\xi)$, it is
follows from (b) that $p_{Z}\leq p$. We now define the condition $r$ as follows:

\begin{enumerate}
\item $\dom\left(  r\right)  =\dom\left(  p_{Z}\right)  \cup \dom\left(
q\right)  .$

\item If $\eta\in \dom\left(  q\right)  \setminus \dom\left(  p_{Z}\right)  ,$
then $r\left(  \eta\right)  =q\left(  \eta\right)  .$

\item Let $\eta\in \dom\left(  p_{Z}\right)  ,$ so $\eta=\alpha_{i}$ for some
$i\leq m.$ We have the following:

\begin{enumerate}
\item Assume $\alpha_{i}\in D_{\gamma^{\prime}}$ with $\gamma^{\prime}\notin
M$ (so $\eta\notin \dom\left(  q\right)  $), define $r\left(  \alpha
_{i}\right)  =p_Z\left(  \alpha_{i}\right)  $ (note that this will be
the case when $\gamma^{\prime}=\gamma$).

\item Assume $\alpha_{i}\in D_{\gamma^{\prime}}$ with $\gamma^{\prime}\in M$. Let
$p_{Z}\left(  \alpha_{i}\right)  =\left(  s_{\alpha_{i}}^{p_{Z}},\{\dot A_{\gamma'}^\mu: \mu \in J_{\alpha
_{i}}^{p_{Z}}\}\right)  $ and $q\upharpoonright \delta_i \Vdash q\left(  \delta_{i}\right)  =\left(  \dot
{t}_{\delta_{i}}^{q},\{\dot A_{\gamma'}^\mu: \mu \in \dot{J}_{\delta_{i}}^{q}\}\right)  $ (since $q$ is not
pure, $\dot{t}_{\delta_{i}}^{q}$ and $\dot{J}_{\delta_{i}}^{q}$ might be names
and not actual objects). Define $r\left(  \alpha_{i}\right)  =\left(  \dot
{t}_{\delta_{i}}^{q},\{\dot A_{\gamma'}^\mu: \mu \in J_{\alpha_{i}}^{p_{Z}}\cup\dot{J}_{\delta_{i}}%
^{q}\}\right)  $.  In here, note that $\dot{t}_{\delta_{i}}^{q}$
is a $\mathbb{P}_{\delta_{i}}$-name, since $\delta_{i}\leq\alpha_{i}$ it is also
a $\mathbb{P}_{\alpha_{i}}$-name, so the definition at least makes sense.

\item Assume $\alpha_{i}\in H$. Let $p_{Z}\left(  \alpha_{i}\right)  =\left(
s_{\alpha_{i}}^{p_{Z}},n,\{\dot f_{\mu}: \mu \in J_{\alpha_{i}}^{p_{Z}}\}\right)  $, $q\left(
\delta_{i}\right)  =\left(  \dot{t}_{\delta_{i}}^{q},\dot{m}_{\delta_{i}}%
^{q},\{\dot f_{\mu}:\mu \in\dot{J}_{\delta_{i}}^{q}\}\right)  $, and $r\left(  \alpha_{i}\right)
=\left(  \dot{t}_{\delta_{i}}^{q},\dot{m}_{\delta_{i}}^{q},\{\dot f_{\mu}: \mu \in\dot{J}_{\delta
_{i}}^{q}\cup J_{\alpha_{i}}^{p_{Z}}\}\right)  .$

\item Assume $\alpha_{i}\in E$ and $\alpha_{i}\notin \dom\left(  q\right)  .$
Define $r\left(  \alpha_{i}\right)  =p_{Z}\left(  \alpha_{i}\right)  .$

\item Assume $\alpha_{i}\in E$ and $\alpha_{i}\in \dom\left(  q\right)  $ (so
$\delta_{i}=\alpha_{i}$ and $\alpha_{i}\in A$). Let $p_{Z}\left(  \alpha
_{i}\right)  =\left(  s_{\alpha_{i}}^{p_{Z}},n,\dot{J}_{\alpha_{i}}^{p_{Z}%
}\right)  $ and note that in here we have that $q\left(  \delta_{i}\right)
=\left(  s_{\alpha_{i}}^{p_{Z}},n,\dot{J}_{\delta_{i}}^{q}\right)  $ (this is
because $\alpha_{i}\in A,$ so $q\left(  \delta_{i}\right)  =\overline
{p}\left(  \delta_{i}\right)  $). Define $r\left(  \delta_{i}\right)  =\left(
s_{\alpha_{i}}^{p_{Z}},n,\dot{J}_{\delta_{i}}^{q}\cup\dot{J}_{\alpha_{i}%
}^{p_{Z}}\right)  .$
\end{enumerate}
\end{enumerate}

It might not be immediately obvious that $r$ is a condition, since the
``size requirement'' may fail in the
coordinates of $E$ or $H.$ We will show that this is not the case.

\begin{claim}
Let $\eta\in \dom\left(  r\right)  .$

\begin{enumerate}
\item $r\upharpoonright\eta\in\mathbb{P}_{\eta}.$

\item $r\upharpoonright\eta\Vdash``r\left(  \eta\right)  \in\mathbb{\dot{Q}%
}_{\eta}\mrq.$

\item $r\upharpoonright\eta\leq q\upharpoonright\eta.$

\item $r\upharpoonright\eta\Vdash``r\left(  \eta\right)  \leq q\left(
\eta\right)  \mrq.$
\end{enumerate}

\end{claim}

We will prove the claim. Note that points 3 and 4 are trivial once we know
that $r\upharpoonright\eta$ is a condition. We proceed by induction, it is
enough to show that if $r\upharpoonright\eta\in\mathbb{P}_{\eta}$ and
$r\upharpoonright\eta\leq q\upharpoonright\eta,$ then $r\upharpoonright
\eta\Vdash``r\left(  \eta\right)  \in\mathbb{\dot{Q}}_{\eta}\mrq%
.$ Furthermore, this is clear whenever $\eta\in \dom\left(  q\right)  \setminus
\dom\left(  p_{Z}\right)  ,$ $\eta\notin H\cup E$ or $\eta\in E\setminus
\dom\left(  q\right)  .$ We focus on the other cases. From now on, $\eta\in
\dom\left(  p_{Z}\right)  ,$ so we may assume that $\eta=\alpha_{i}$ for some
$i\leq m.$

\begin{case}$\alpha_{i}\in H$.
\end{case}
In here, $p_{Z}\left(  \alpha_{i}\right)  =\left(
s_{\alpha_{i}}^{p},n,\{\dot f_{\mu}: \mu \in J_{\alpha_{i}}^{p}\}\right)  $, $q\left(
\delta_{i}\right)  =\left(  \dot{t}_{\delta_{i}}^{q},\dot{m}_{\delta_{i}}%
^{q},\{\dot f_{\mu}:\mu \in\dot{J}_{\delta_{i}}^{q}\}\right)  $, and $r\left(  \alpha_{i}\right)
=\left(  \dot{t}_{\delta_{i}}^{q},\dot{m}_{\delta_{i}}^{q},\{\dot f_{\mu}: \mu \in\dot{J}_{\delta
_{i}}^{q}\cup J_{\alpha_{i}}^{p}\}\right)$. As $\overline{p}\left(  \delta_{i}\right)  =\left(
s_{\alpha_{i}}^{p},n,\{\dot f_{\mu}: \mu \in J_{\delta_{i}}^{\bar p}\}\right)  $ and since $q\leq\overline{p},$ we get
that $q\upharpoonright\delta_{i}\Vdash``n\leq\dot{m}_{\delta_{i}}%
^{q}\mrq.$ Furthermore, $q\upharpoonright\delta_{i}%
\Vdash``4\left\vert \dot{J}_{\delta_{i}}^{q}\right\vert \leq\dot{m}_{\delta_{i}%
}^{q}\mrq$. We also know that $4\left\vert J_{\alpha_{i}}%
^{p}\right\vert \leq n,$ (since $p$ is pure), hence
$q\upharpoonright\delta_{i}\Vdash_{\mathbb P_{\delta_i}}``4\left\vert \dot{J}_{\delta_{i}}%
^{q}\right\vert ,4\left\vert J_{\alpha_{i}}^{p_{Z}}\right\vert \leq \dot
{m}_{\delta_{i}}^{q}\mrq.$ Since $r\upharpoonright\alpha_{i}\leq r\upharpoonright\delta_{i}\leq
q\upharpoonright\delta_{i}$, $\mathbb P_{\delta_i}$ is completely embedded into $\mathbb P_{\alpha_i}$ and the formula is absolute for transitive models of ZFC, we get that $r\upharpoonright\alpha_{i}%
\Vdash_{\mathbb P_{\alpha_i}}``4\left\vert \dot{J}_{\delta_{i}}^{q}\right\vert ,4\left\vert
J_{\alpha_{i}}^{p_{Z}}\right\vert \leq\dot{m}_{\delta_{i}}^{q}\mrq%
,$ so $r\left(  \alpha\right)  $ is forced to be a condition by Lemma
\ref{lemma del 4}.
\begin{case}
$\alpha_i \in \dom q\cap E$.
\end{case}
In here, $p_{Z}\left(  \alpha_{i}\right)  =\left(
s_{\alpha_{i}}^{p},m_{\alpha_i},\dot J_{\alpha_{i}}^{p}\right)  $, $q\left(  \alpha_{i}\right)  =\overline{p}(\alpha_i)=\left(
s_{\alpha_{i}}^{p},m_{\alpha_i},\dot J_{\alpha_{i}}^{q}\right)  $ and $r(\alpha_i)=\left(
s_{\alpha_{i}}^{p},m_{\alpha_i},\dot J_{\alpha_{i}}^{q}\cup \dot J_{\alpha_{i}}^{p}\right)$. Clearly, $r\upharpoonright \alpha_i \Vdash 
4\left|\dot J_{\alpha_i}^q\right|, 4\left|\dot J_{\alpha_i}^p\right|\leq m_\alpha$ since any condition forces this statement, so $r\left(  \alpha\right)  $ is forced to be a condition by Lemma \ref{lemma del 4}.\\

We now know that $r$ is indeed a condition and that $r\leq q.$ Note that $r$
follows $K$.

We will now prove that $r\leq p_{Z}.$ Let $\alpha_{i}\in
\dom\left(  p_{Z}\right)  ,$ assume that we know that $r\upharpoonright
\alpha_{i}\leq p_{Z}\upharpoonright\alpha_{i},$ we will prove that
$r\upharpoonright\alpha_{i}\Vdash``r\left(  \alpha_{i}\right)  <p_{Z}\left(
\alpha_{i}\right)  \mrq.$ We proceed by cases:
\begin{claim}$r\leq p_{Z}.$
\end{claim}
\begin{case}
$\alpha_{i}\in D_{\gamma^{\prime}}$ with $\gamma^{\prime}\notin M.$
\end{case}

This case is immediate by the definition.

\begin{case}
$\alpha_{i}\in D_{\gamma^{\prime}}$ with $\gamma^{\prime}\in M$ and
$\alpha_{i}\in \dom\left(  q\right)  $ (hence $\delta_{i}=\alpha_{i}$).
\end{case}

In here, we have that 
$$p_{Z}\left(  \alpha_{i}\right)  =\left(  s_{\alpha_{i}%
}^{p_{Z}},\{\dot A_{\gamma'}^\mu: \mu \in J_{\alpha_{i}}^{p_{Z}}\}\right)  , \ q\left(  \alpha_{i}\right)
=\left(  \dot{t}_{\alpha_{i}}^{q},\{\dot A_{\gamma'}^\mu: \mu \in\dot{J}_{\alpha_{i}}^{q}\}\right)  $$
and
$r\left(  \alpha_{i}\right)  =\left(  \dot{t}_{\alpha_{i}}^{q},\{\dot A_{\gamma'}^\mu: \mu \in J_{\alpha_{i}%
}^{p_{Z}}\cup\dot{J}_{\alpha_{i}}^{q}\}\right)  $. Since $r\leq q,$ we have that
$r\leq\overline{p},$ so $r\upharpoonright\alpha_{i}\Vdash``s_{\alpha_{i}%
}^{p_{Z}}\subseteq\dot{t}_{\alpha_{i}}^{q}\mrq$ (in this case, $s_{\alpha_i}^{\bar p}=s_{\alpha_i}^p=s_{\alpha_i}^{p_Z}$).

Now, let $\alpha_{j}\in J_{\alpha_{i}}^{p_{Z}}$ (recall that the stem of
$r\left(  \alpha_{j}\right)  $ is $\dot{t}_{\alpha_{j}}^{q}$). We need to
prove that $r\upharpoonright\alpha_{i}\Vdash``\left(  \dot{t}_{\alpha_{i}}%
^{q}\right)  ^{-1}\left(  1\right)  \cap A_{\gamma^{\prime}}^{\alpha_{j}%
}\subseteq n\mrq.$ Let $\dot{m}_{\alpha_{i}},\dot{m}_{\alpha
_{j}}$ such that $q\upharpoonright\alpha_{i}\Vdash``\dot{t}_{\alpha_{i}}%
^{q}:\dot{m}_{\alpha_{i}}\longrightarrow2\mrq$ and
$q\upharpoonright\alpha_{j}\Vdash``\dot{t}_{\alpha_{j}}^{q}:\dot{m}%
_{\alpha_{j}}\longrightarrow2\mrq.$ Since $q$ satisfies the
$K$-descending condition, we know that $q\upharpoonright\alpha_{i}\Vdash
``\dot{m}_{\alpha_{j}}\geq\dot{m}_{\alpha_{i}}\mrq.$ Since
$q\upharpoonright\alpha_{j}\Vdash``A_{\gamma^{\prime}}^{\alpha_{j}}\cap\dot
{m}_{\alpha_{j}}=\left(  \dot{t}_{\alpha_{j}}^{q}\right)  ^{-1}\left(
1\right)  \mrq,$ we get that $q\upharpoonright\alpha_{i}%
\Vdash``A_{\gamma^{\prime}}^{\alpha_{j}}\cap\dot{m}_{\alpha_{i}}=\left(
\dot{t}_{\alpha_{j}}^{q}\right)  ^{-1}\left(  1\right)  \cap\dot{m}%
_{\alpha_{i}}\mrq.$ Since $r\leq\overline{p},$ we know that
$r\Vdash``A_{\gamma^{\prime}}^{\alpha_{i}}\cap A_{\gamma^{\prime}}^{\alpha
_{j}}\subseteq n\mrq.$ In particular, $\dot{t}_{\alpha_{i}}^{q}$
is forced to be disjoint with $\dot{A}_{\gamma^{\prime}}^{\alpha_{j}}\setminus
n,$ so we get that $r\Vdash``\left(  \dot{t}_{\alpha_{i}}^{q}\right)
^{-1}\left(  1\right)  \cap\left(  \dot{t}_{\alpha_{j}}^{q}\right)
^{-1}\left(  1\right)  \subseteq n\mrq,$ hence $r\upharpoonright
\alpha_{i}\Vdash``\left(  \dot{t}_{\alpha_{i}}^{q}\right)  ^{-1}\left(
1\right)  \cap A_{\gamma^{\prime}}^{\alpha_{j}}\subseteq n\mrq,$
which is what we wanted to prove.

\begin{case}
$\alpha_{i}\in D_{\gamma^{\prime}}$ with $\gamma^{\prime}\in M$ and
$\alpha_{i}\notin \dom\left(  q\right)  $ (so $\delta_{i}<\alpha_{i}$).
\end{case}

In here, we have that $p_{Z}\left(  \alpha_{i}\right)  =\left(  s_{\alpha_{i}%
}^{p_{Z}},\{\dot A_{\gamma}^\mu: \mu \in J_{\alpha_i}^{p_Z}\}\right)  $, $q\left(  \delta_{i}\right)
=\left(  \dot{t}_{\delta_{i}}^{q},\{A_{\gamma'}^\mu: \mu \in \dot{J}_{\delta_{i}}^{q}\}\right)  $ and
$r\left(  \alpha_{i}\right)  =\left(  \dot{t}_{\delta_{i}}^{q},\{A_{\gamma'}^\mu: \mu \in \dot J_{\delta_i}^q\cup J_{\alpha_i}^{p_z}\}\right)  $. Since $r\leq q,$ we have
that $r\leq\overline{p},$ so $r\upharpoonright\delta_{i}\Vdash``s_{\alpha_{i}%
}^{p_{Z}}\subseteq\dot{t}_{\delta_{i}}^{q}\mrq$ (recall that
$s_{\alpha_{i}}^{p_{Z}}$ is the stem of $\overline{p}\left(  \delta
_{i}\right)  $).

Now, let $\alpha_{j}\in J_{\alpha_{i}}^{p_{Z}}$ (recall that the stem of
$r\left(  \alpha_{j}\right)  $ is $\dot{t}_{\delta_{j}}^{q}$). We need to
prove that $r\upharpoonright\alpha_{i}\Vdash``\left(  \dot{t}_{\delta_{i}}%
^{q}\right)  ^{-1}\left(  1\right)  \cap A_{\gamma^{\prime}}^{\alpha_{j}%
}\subseteq n\mrq.$ Let $\dot{m}_{\delta_{i}},\dot{m}_{\delta
_{j}}$ such that $q\upharpoonright\delta_{i}\Vdash``\dot{t}_{\delta_{i}}%
^{q}:\dot{m}_{\delta_{i}}\longrightarrow2\mrq$ and
$q\upharpoonright\delta_{j}\Vdash``\dot{t}_{\delta_{j}}^{q}:\dot{m}%
_{\delta_{j}}\longrightarrow2\mrq.$ Since $q$ satisfies the
$K$-descending condition, we know that $q\upharpoonright\delta_{i}\Vdash
``\dot{m}_{\delta_{j}}\geq\dot{m}_{\delta_{i}}\mrq.$ Since
$q\upharpoonright\delta_{j}\Vdash``A_{\gamma^{\prime}}^{\delta_{j}}\cap\dot
{m}_{\delta_{j}}=\left(  \dot{t}_{\delta_{j}}^{q}\right)  ^{-1}\left(
1\right)  \mrq,$ we get that $q\upharpoonright\delta_{i}%
\Vdash``A_{\gamma^{\prime}}^{\delta_{j}}\cap\dot{m}_{\delta_{i}}=\left(
\dot{t}_{\delta_{j}}^{q}\right)  ^{-1}\left(  1\right)  \cap\dot{m}%
_{\delta_{i}}\mrq.$ Since $r\leq\overline{p},$ we know that
$r\Vdash``A_{\gamma^{\prime}}^{\delta_{i}}\cap A_{\gamma^{\prime}}^{\delta
_{j}}\subseteq n\mrq.$ In particular, $\dot{t}_{\delta_{i}}^{q}$
is forced to be disjoint with $\dot{A}_{\gamma^{\prime}}^{\delta_{j}}\setminus
n,$ so we get that $r\Vdash``\left(  \dot{t}_{\alpha_{i}}^{q}\right)
^{-1}\left(  1\right)  \cap\left(  \dot{t}_{\delta_{j}}^{q}\right)
^{-1}\left(  1\right)  \subseteq n\mrq,$ hence $r\upharpoonright
\alpha_{i}\Vdash``\left(  \dot{t}_{\delta_{i}}^{q}\right)  ^{-1}\left(
1\right)  \cap A_{\gamma^{\prime}}^{\alpha_{j}}\subseteq n\mrq,$
which is what we wanted to prove.

\begin{case}
$\alpha_{i}\in H$ and $\alpha_{i}\in \dom\left(  q\right)  $ (so $\alpha
_{i}=\delta_{i}$).
\end{case}

In here, we have that $p_{Z}\left(  \alpha_{i}\right)  =\left(  s_{\alpha_{i}%
}^{p_{Z}},n,\{\dot f_\mu: \mu \in J_{\alpha_{i}}^{p_{Z}}\}\right)  $, $q\left(  \delta_{i}\right)
=\left(  \dot{t}_{\delta_{i}}^{q},\dot{m}_{\delta_{i}}^{q},\{\dot f_\mu: \mu \in\dot{J}_{\delta
_{i}}^{q}\}\right)  $ and $r\left(  \alpha_{i}\right)  =\left(  \dot{t}%
_{\delta_{i}}^{q},\dot{m}_{\delta_{i}}^{q},\{\dot f_\mu: \mu \in \dot{J}_{\delta_{i}}^{q}\cup
J_{\alpha_{i}}^{p_{Z}}\}\right)$. Since $r\leq q,$ we have that $r\leq
\overline{p},$ so $r\upharpoonright\alpha_{i}\Vdash``s_{\alpha_{i}}^{p_{Z}%
}\subseteq\dot{t}_{\delta_{i}}^{q}\mrq.$

Now, let $\alpha_{j}\in J_{\alpha_{i}}^{p_{Z}}$ (recall that the stem of
$r\left(  \alpha_{j}\right)  $ is $\dot{t}_{\delta_{j}}^{q}$). We need to
prove that $r\upharpoonright\alpha_{i}\Vdash``\dot{t}_{\delta_{i}}^{q}\cap
\dot{f}_{\alpha_{j}}\subseteq n\times n\mrq.$ Since $q$
satisfies the $K$-descending condition, we know that $q\upharpoonright
\delta_{i}\Vdash``\dot{m}_{\delta_{j}}\geq\dot{m}_{\delta_{i}}%
\mrq.$ Since $q\upharpoonright\alpha_{j}\Vdash``\dot{f}%
_{\alpha_{j}}\upharpoonright\dot{m}_{\delta_{j}}=\dot{t}_{\delta_{j}}%
^{q}\mrq,$ we get that $q\upharpoonright\delta_{i}\Vdash
``\dot{f}_{\alpha_{j}}\upharpoonright\dot{m}_{\delta_{i}}=\dot{t}_{\delta_{j}%
}^{q}\mrq.$ Since $r\leq\overline{p},$ we know that
$r\Vdash``\dot{f}_{\alpha_{i}}\cap\dot{f}_{\delta_{j}}\subseteq n\times
n\mrq.$ In particular, $\dot{t}_{\delta_{i}}^{q}$ is forced to
be disjoint with $\dot{f}_{\alpha_{j}}$ above $n,$ so we get that
$r\Vdash``\dot{t}_{\delta_{i}}^{q}\cap\dot{t}_{\delta_{j}}^{q}\subseteq
n\times n\mrq,$ hence $r\upharpoonright\alpha_{i}\Vdash``\dot
{t}_{\delta_{i}}^{q}\cap\dot{f}_{\alpha_{j}}\subseteq n\times
n\mrq,$ which is what we wanted to prove.

\begin{case}
$\alpha_{i}\in H$ and $\alpha_{i}\notin \dom\left(  q\right)  $ (so $\delta
_{i}<\alpha_{i}$).
\end{case}

In here, we have that $p_{Z}\left(  \alpha_{i}\right)  =\left(  s_{\alpha_{i}%
}^{p_{Z}},n,\{\dot f_\mu: \mu \in J_{\alpha_{i}}^{p_{Z}}\}\right)  $, $q\left(  \delta_{i}\right)
=\left(  \dot{t}_{\delta_{i}}^{q},\dot{m}_{\delta_{i}}^{q},\{\dot f_\mu: \mu \in\dot{J}_{\delta
_{i}}^{q}\}\right)  $ and $r\left(  \alpha_{i}\right)  =\left(  \dot{t}%
_{\delta_{i}}^{q},\dot{m}_{\delta_{i}}^{q},\{\dot f_\mu: \mu \in \dot{J}_{\delta_{i}}^{q}\cup
J_{\alpha_{i}}^{p_{Z}}\}\right)$. Since $r\leq q,$ we have that $r\leq\overline{p},$ so
$r\upharpoonright\delta_{i}\Vdash``s_{\alpha_{i}}^{p_{Z}}\subseteq\dot
{t}_{\delta_{i}}^{q}\mrq$ (recall that $s_{\alpha_{i}}^{p_{Z}}$
is the stem of $\overline{p}\left(  \delta_{i}\right)  $).

Now, let $\alpha_{j}\in J_{\alpha_{i}}^{p_{Z}}$ (recall that the stem of
$r\left(  \alpha_{j}\right)  $ is $\dot{t}_{\delta_{j}}^{q}$). We need to
prove that $r\upharpoonright\alpha_{i}\Vdash``\dot{t}_{\alpha_{i}}^{q}\cap
\dot{f}_{\alpha_{j}}\subseteq n\times n\mrq.$ Since $q$
satisfies the descending condition, we know that $q\upharpoonright\delta
_{i}\Vdash``\dot{m}_{\delta_{j}}\geq\dot{m}_{\delta_{i}}\mrq.$
Since $q\upharpoonright\alpha_{j}\Vdash``\dot{f}_{\alpha_{j}}\upharpoonright
\dot{m}_{\delta_{j}}=\dot{t}_{\delta_{j}}^{q}\mrq,$ we get that
$q\upharpoonright\alpha_{i}\Vdash``\dot{f}_{\alpha_{j}}\upharpoonright\dot
{m}_{\delta_{i}}=\dot{t}_{\delta_{j}}^{q}\mrq.$ Since
$r\leq\overline{p},$ we know that $r\Vdash``\dot{f}_{\alpha_{i}}\cap\dot
{f}_{\delta_{j}}\subseteq n\times n\mrq.$ In particular,
$\dot{t}_{\delta_{i}}^{q}$ is forced to be disjoint with $\dot{f}_{\alpha_{j}%
}$ above $n,$ so we get that $r\Vdash``\dot{t}_{\delta_{i}}^{q}\cap\dot
{t}_{\delta_{j}}^{q}\subseteq n\times n\mrq,$ hence
$r\upharpoonright\alpha_{i}\Vdash``\dot{t}_{\delta_{i}}^{q}\cap\dot{f}%
_{\alpha_{j}}\subseteq n\times n\mrq,$ which is what we wanted
to prove.

\begin{case}
$\alpha_{i}\in E$ and $\alpha_{i}\notin \dom\left(  q\right)  .$
\end{case}

This case is immediate by the definition.

\begin{case}
$\alpha_{i}\in E$ and $\alpha_{i}\in \dom\left(  q\right)  $ (so $\delta
_{i}=\alpha_{i}$ and $\alpha_{i}\in A$).
\end{case}

This case is immediate by the definition.

After all the cases, we can finally conclude that $r\leq q,p_{Z}.$ Since $r$
follows $K$ and $r\leq q,$ there is $r^{\prime}\in L$ such that $r^{\prime}$
and $r$ are compatible. Let $\overline{r}$ be a common extension. We now have
the following:

\begin{enumerate}
\item $\overline{r}\Vdash``\dot{X}\cap Z\neq\emptyset\mrq.$

\item $\overline{r}\Vdash``Z\subseteq\dot{A}_{\gamma}^{\xi}\mrq$
(since $\overline{r}\leq p_{Z}$).
\end{enumerate}

Hence $\overline{r}\Vdash``\dot{A}_{\gamma}^{\xi}\cap\dot{X}\nsubseteq
k\mrq,$ which is what we wanted to prove. We conclude that
$V\left[  G\right]  \models%
{\textstyle\bigcap\limits_{\gamma\in\omega_{1}}}
\mathcal{I}\left(  A^{\gamma}\right)  =[\omega]^{<\omega}.$

Let $\mathcal{B}=\left\{  f_{\alpha}\mid\alpha\in H\right\}  ,$ we now have
the following:

\begin{claim}
$\mathcal{B}$ is a \textsf{MAD} family of size $\omega_{2}.$
\end{claim}

It is easy to see that $\mathcal{B}$ is an almost disjoint family of size
$\omega_{2},$ it remains to prove that it is maximal. Let $h\in PFun\left(
\Delta\right)  $ and $A=\dom\left(  h\right)  .$ By the last claim, there is
$\gamma\in\omega_{1}$ such that $A\in\mathcal{I}\left(  \mathcal{A}^{\gamma
}\right)^{+}.$ In this way, we can find $\beta\in D_{\gamma}$ such that
$C=A\cap A_{\gamma}^{\beta}$ is infinite and $h\in V\left[  G_{\beta}\right]
,$ define $h_{1}=h\upharpoonright C$ and note that $h_{1}\in V\left[
G_{\beta+1}\right]  .$ Let $\alpha=R\left(  \beta\right)  $ (so $\beta<\alpha
$). First consider the case where $h_{1}\in\mathcal{I}\left(  \mathcal{B}%
_{\alpha}\right)  .$ In this way, there are $\alpha_{1},...,\alpha_{n}\in H$
such that $h_{1}\subseteq f_{\alpha_{1}}\cup...\cup f_{\alpha_{n}},$ so
clearly $h_{1}$ has infinite intersection with an $f_{\alpha_{i}}.$ In case
$h_{1}\in\mathcal{I}\left(  \mathcal{B}_{\alpha}\right)  ^{+},$ we will have
that \thinspace$f_{\alpha}\cap h_{1}$ is infinite by \ref{Basic E}.

Finally, we will prove the following:

\begin{claim}
$\mathfrak{ie}=\omega_{2}.$
\end{claim}

On one hand, since $\mathcal{B}$ is \textsf{MAD}, we get that $\mathfrak{ie}%
\leq\omega_{2}.$ On the other hand, since we are forcing with $\mathbb{E}%
_{\Delta}$ cofinally many times, we get that $\omega_{2}\leq\mathfrak{ie}.$ We
conclude that $\mathfrak{ie}=\omega_{2}$ holds in our model.
\end{proof}

		\bibliographystyle{plain}
		\bibliography{bibliography}
		
	\end{document}